\newtheorem{theorem}{Theorem}
\newtheorem{remark}[theorem]{Remark}
\newtheorem{lemma}[theorem]{Lemma}
\newtheorem{proposition}[theorem]{Proposition}
\newtheorem{definition}[theorem]{Definition}
\newcommand{\Ss}{\textnormal{(S}_+\textnormal{)}}
\newcommand{\N}{\mathbb{N}}
\newcommand{\R}{\mathbb{R}}
\newcommand*\diff{\mathrm{d}}
\newcommand{\Lp}[1]{L^{#1}(\Omega)}
\newcommand{\Wp}[1]{W^{1,#1}(\Omega)}
\newcommand{\Wpzero}[1]{W^{1,#1}_0(\Omega)}
\newcommand{\eps}{\varepsilon}
\newcommand{\into}{\int_{\Omega}}
\newcommand{\weak}{\rightharpoonup}
\newcommand{\Linf}{L^{\infty}(\Omega)}
\newcommand{\close}{\overline{\Omega}}
\renewcommand{\l}{\left}
\renewcommand{\r}{\right}
\numberwithin{theorem}{section}
\numberwithin{equation}{section}
\title[Variable exponent multi phase problems]{Regularity and uniqueness to multi-phase problem with variable exponent}\thanks{Research supported by NNSF of China (No. 12371110).}
\author[G.\,Dai]{Guowei Dai}
\address[G.\,Dai]{School of Mathematical Sciences, Dalian University of Technology, Dalian, 116024, PR China}
\email{daiguowei@dlut.edu.cn}
\author[F.\,Vetro]{Francesca Vetro}
\address[F.\,Vetro]{Independent Researcher, 90100 Palermo, Italy}
\email{francescavetro80@gmail.com}
\subjclass{35A01, 35D30, 35J60, 35J62, 35J66}
\keywords{Musielak-Orlicz Sobolev spaces; Variable exponents; Multi phase operators; Uniqueness; Regularity}
\begin{document}

\begin{abstract}
In this paper, we consider a new class of multi phase operators with variable exponents, which reflects the inhomogeneous characteristics of hardness changes when multiple different materials are combined together. We at first deal with the corresponding functional
spaces, namely the Musielak-Orlicz
Sobolev spaces, hence we investigate their regularity
properties and the extension of the classical Sobolev embedding results to the new context.
Then, we focus on the regularity properties of our operators, and prove that these operators are bounded, continuous, strictly monotone, coercive and satisfy the $\left(S_+\right)$-property. Further, we discuss suitable
problems driven by such operators. In particular,
we deal with Dirichlet problems in which the nonlinearity is
gradient dependent. Under very general assumptions, we establish the existence of a nontrivial solution for such problems. Also, we give additional conditions on the nonlinearity which guarantee the uniqueness of solution. Lastly, we produce some local regularity results (namely, Cacciop\-poli-type inequality, Sobolev-Poincar\'{e}-type inequalities and higher integrability) for minimizers of the integral functionals corresponding to the operators.
\end{abstract}

\maketitle

\section{Introduction}\label{sec1}
Let $\Omega \subseteq \mathbb{R}^N\, (N \geq 2)$ be a bounded domain with Lipschitz boundary. In this paper, we focus on a new class of operators with variable exponents. Precisely, we deal with operators of the form
\begin{align} \label{operator}
	\text{div} \, \l(|\nabla u|^{p(x)-2}\nabla u +\mu_1(x)|\nabla u|^{q(x)-2}\nabla u + \mu_2(x)|\nabla u|^{r(x)-2}\nabla u \r)
\end{align}
where $p(\cdot), q(\cdot), r(\cdot) \in C(\close)$ are such that $1 < p(x) <N$, \,$ p(x) <q(x) < r(x)$ for all $x \in \close$ and further $0 \leq \mu_1, \mu_2 \in \Linf$.
To the best of our knowledge, this is the first work dealing with problems driven by the operator with variable exponents given in the general form \eqref{operator}. Also, we emphasize that the integral functionals of type
	\begin{align} \label{funmulti}
	\omega \mapsto \int_\Omega \big(  |\nabla  \omega|^{p(x)} + \mu_1(x)|\nabla  \omega|^{q(x)} + \mu_2(x)|\nabla  \omega|^{r(x)}\big)\,\diff x
\end{align}
have a strongly non-uniform ellipticity and discontinuity of growth order at points where $\mu_i = 0$. Hence, they are
a valuable tool in order to describe
the behaviour of inhomogeneous materials whose strengthening properties, connected to the exponent dominating the growth of the gradient variable, significantly change with the point. Moreover, the functionals given in \eqref{funmulti} have two modulating coefficients $\mu_1(\cdot), \mu_2(\cdot)$ which are able
to dictate the geometry of composites made of three different materials.

Such operator arises as natural generalization both of the variable exponent double phase operator
\begin{align} \label{operator2}
	\text{div} \, \l(|\nabla u|^{p(x)-2}\nabla u +\mu_1(x)|\nabla u|^{q(x)-2}\nabla u \r)
\end{align}
studied by Crespo-Blanco-Gasinski-Harjulehto-Winkert in recent paper \cite{Crespo-Blanco-Gasinski-Harjulehto-Winkert-2022}, and of multi phase operator with constant exponents considered by De Filippis-Oh in \cite{DeFilippis-Oh-2019}. We in fact stress that if $\mu_2 = 0$ then the operator in \eqref{operator} becomes the variable exponent double phase operator given in \eqref{operator2}. Also, if $p, q, r$ are constants then the operator in \eqref{operator} reduces to the multi phase operator with
constant exponents.

Here inspired by \cite{Crespo-Blanco-Gasinski-Harjulehto-Winkert-2022} we start our study by focusing on the functional space related to the operator introduced in \eqref{operator} (see Section \ref{sec3}). We show that such space is a reflexive Banach space (see Proposition \ref{prop1}) and further we check that the classical Sobolev embedding results extend to it (see Proposition \ref{embedding}).
Then, we investigate the regularity properties of the operator given in \eqref{operator} (see Section \ref{sec4}). In particular, we show that it is bounded (that is, it maps bounded sets into bounded sets), continuous, strictly monotone, coercive and in addition it satisfies the $\left(S_+\right)$-property.

Next, we also discuss certain
problems driven by our new operator. We in particular focus on the following Dirichlet problem
\begin{equation} \label{problem}
	\begin{aligned}
		-\operatorname{div}\left( |\nabla u|^{p(x)-2}\nabla u + \mu_1(x)|\nabla u|^{q(x)-2}\nabla u + \mu_2(x)|\nabla u|^{r(x)-2}\nabla u\right)\\
		= f(x,u, \nabla u) && \text{in } \Omega,\\
		\hspace{-30ex}u  = 0 &&\text{on } \partial\Omega
	\end{aligned}
\end{equation}
where we suppose $r(x) < p^*(x):= N p(x)/(N-p(x))$ (the critical exponent corresponding to $p(x)$)
and where $f\colon \Omega \times \R \times \R^N \to\R$ is a Carath\'eodory function which depends on the gradient of the solution. We point out that the appearance of a nonlinearity that depends on the gradient of the solution is a source of difficulty for the analysis of problem \eqref{problem}. Indeed, it gives to our problem a non-variational structure and so we cannot apply directly the standard variational tools. Consequently, here we use a topological approach based on the surjectivity of pseudomonotone operators. In this way, under suitable growth assumptions (see hypothesis \eqref{H2}), we are able to establish the existence of at least one
nontrivial weak solution for problem \eqref{problem} (see Theorem \ref{existence}). Furthermore, under additional hypothesis on the nonlinearity $f$ (see hypothesis \eqref{H3}), we have also the uniqueness of the solution when $p(x) = 2$ for all $x \in \close$ (see Theorem \ref{unicity1}), when $q(x) = 2$ for all $x \in \close$ and $\inf_{x \in \close} \mu_1(x) > 0$ (see Theorem \ref{unicity2}), and when $r(x) = 2$ for all $x \in \close$ and $\inf_{x \in \close} \mu_2(x) > 0$ (see Theorem \ref{unicity3}). We emphasize that our existence and uniqueness results, given in Theorem \ref{existence} and Theorem \ref{unicity1}, respectively, extend the ones of Crespo-Blanco-Gasinski-Harjulehto-Winkert \cite[Theorems 4.4 and 4.5]{Crespo-Blanco-Gasinski-Harjulehto-Winkert-2022} to the case of three variable exponents.
	
We note that functionals of type
	\begin{align} \label{fundouble}
		\omega \mapsto \int_\Omega \big(  |\nabla  \omega|^p + \mu(x)|\nabla  \omega|^q\big)\,\diff x, \quad 1< p < q < N,
	\end{align}
were first considered by Marcellini \cite{Marcellini-1991} and Zhikov \cite{Zhikov-1986} in the context of problems of nonlinear elasticity.
But such functionals have also applications, for example, in the study of duality theory
and of the Lavrentiev gap phenomenon, see Zhikov \cite{Zhikov-1995,Zhikov-2011} and Papageorgiou-R\u{a}dulescu-Repov\v{s} \cite{Papageorgiou-Radulescu-Repovs-2019-2}, and in the context of problems of the calculus of variations,
see Marcellini \cite{Marcellini-1989, Marcellini-1991}. We in addition mention that regularity results for local minimizers of such functionals were produced by Baroni-Colombo-Mingione \cite{Baroni-Colombo-Mingione-2018} and
Colombo-Mingione \cite{Colombo-Mingione-2015, Colombo-Mingione-2015-1}.

Among of papers involving the variable exponent double phase operator, we recall the recent work of Aberqi-Bennouna-Benslimane-Ragusa \cite{Aberqi-Bennouna-Benslimane-Ragusa-2022} for existence results in complete manifolds, Albalawi-Alharthi-Vetro \cite{Albalawi-Alharthi-Vetro-2022} for $(p(\cdot),q(\cdot))$-Laplace type problems with convection, Bahrouni-R\u{a}dulescu-Winkert \cite{Bahrouni-Radulescu-Winkert-2020} for double phase problems of Baou\-endi-Grushin type operator, Leonardi-Papageorgiou \cite{Leonardoi-Papageorgiou-2022} for concave-convex problems, Vetro-Winkert \cite{Vetro-Winkert-2023} for parametric problems involving superlinear nonlinearities, Zeng-R\u{a}dulescu-Winkert \cite{Zeng-Radulescu-Winkert-2022} for multivalued problems and Zhang-R\u{a}dulescu \cite{Zhang-Radulescu-2018} for the existence of multiple solutions for double phase anisotropic variational problems, see in addition the references therein.

Also, we refer to the papers of Colasuonno-Squassina \cite{Colasuonno-Squassina-2016} for eigenvalue problems of double phase type, Gasi\'{n}ski-Papageorgiou \cite{Gasinski-Papageorgiou-2021} for locally Lipschitz right-hand sides, Gasi\'{n}ski-Winkert \cite{Gasinski-Winkert-2020b, Gasinski-Winkert-2020} for convection problems and constant sign-solutions, Papageorgiou-Vetro \cite{Papageorgiou-Vetro-2019} for superlinear problems, Papageorgiou-Vetro-Vetro \cite{Papageorgiou-Vetro-Vetro-2021} for parametric Robin problems, Perera-Squassina \cite{Perera-Squassina-2018} for Morse theoretical approach, Vetro-Winkert \cite{Vetro-Winkert-2022} for parametric convective problems. Finally, we mention the recent paper of Liu-Dai \cite{Liu-Dai-2018} where existence and multiplicity results
including
sign-changing solutions are stated for double
phase problems in the constant case. We in addition stress that Liu-Dai \cite{Liu-Dai-2018} derive interesting regularity properties of double phase operator with constant exponents.
	
So we devote the final section of this paper to the study of the minimizers of multi phase integral functionals of type \eqref{funmulti} where $p(\cdot), q(\cdot), r(\cdot)$ are now assumed to be H\"{o}lder continuous (see Section \ref{sec6}). In particular, we here produce some local regularity results. We mention that regularity results in the case of $p, q, r$ constants were furnished by De Filippis-Oh in their recent paper \cite{DeFilippis-Oh-2019}. Also, we remark that Ragusa-Tachikawa \cite{Ragusa-Tachikawa-2020} stated a regularity theorem for minimizers of integral functionals of type \eqref{funmulti} in the case when $\mu_2 = 0$.

We prove, under very general assumptions (see hypothesis \eqref{I}), first a Cacciop\-poli-type inequality (see Theorem \ref{cacioppoli}) and then we give Sobolev-Poincar\'{e}-type inequalities (see Theorems \ref{SobPonIne} and \ref{SobPoiInEE}). We stress that all these inequalities are instrumental for the study of the higher integrability property of the gradient of the minimizers of our integral functionals (see Theorems \ref{HI} and \ref{HI2}).

\section{Mathematical background}\label{sec2}
This section is devoted to introduce the function  framework space we will deal with. Thus, we recall some basic facts from the theory of variable exponent Sobolev spaces and of Musielak-Orlicz Sobolev spaces. Such issues can be enlarged referring to the books of Diening-Harjulehto-H\"{a}st\"{o}-R$\mathring{\text{u}}$\v{z}i\v{c}ka \cite{Diening-Harjulehto-Hasto-Ruzicka-2011}, Harjulehto-Hasto \cite{Harjulehto-Hasto-2019}, Motreanu-Motreanu-Papageorgiou \cite{Motreanu-Motreanu-Papageorgiou-2014} and Musielak \cite{Musielak-1983}.
We also mention the recent paper of Crespo-Blanco-Gasinski-Harjulehto-Winkert \cite{Crespo-Blanco-Gasinski-Harjulehto-Winkert-2022}.

Let $\Omega \subseteq \mathbb{R}^N$ be a bounded domain with Lipschitz boundary $\partial \Omega$. Given $m \in C(\close)$, with $m(x) > 1$ for all $x \in \close$, we set
\begin{align*}
	m^-=\min_{x\in \close}m(x)
	\quad\text{and}\quad
	m^+=\max_{x\in\close} m(x).
\end{align*}
Also, we write $m^{\prime}$ by the conjugate variable exponent to $m$, that is,
\begin{align*}
	\frac{1}{m(x)} + \frac{1}{m^{\prime}(x)} = 1 \quad \text{for all} \ x \in \close.
\end{align*}
Let $M(\Omega)$ be the set of all measurable  functions $u\colon \Omega\to\R$. As usual, we identify two such functions which
differ only on a Lebesgue-null set. We denote by $L^{m(\cdot)}(\Omega)$ the variable exponent Lebesgue space, that is,
\begin{align*}
	L^{m(\cdot)}(\Omega) = \left\{u\in M(\Omega)\, :\, \int_\Omega |u|^{m(x)}\,\diff x < +\infty\right\}
\end{align*}
equipped with the Luxemburg norm
\begin{align*}
	\|u\|_{{m(\cdot)}}= \inf  \left\{ \alpha > 0 \,:\,  \int_\Omega \l| \dfrac{u}{\alpha} \r|^{m(x)}\diff x  \leq 1 \right\}.
\end{align*}
Then, the corresponding modular function $\rho_{m(\cdot)}: \Lp{m(\cdot)} \to \R$ is given by
\begin{align*}
	\rho_{m(\cdot)}(u):=\int_\Omega |u|^{m(x)}\,\diff x \quad \text{for all} \ u \in \Lp{m(\cdot)}.
\end{align*}
This modular function is closely related to the Luxemburg norm. In fact, we have the following result.

\begin{proposition}\label{Prop0}
	Let $m \in C(\close)$ be such that $m(x) > 1$ for all $x \in \close$.
	Then the following hold:
	\begin{enumerate}
		\item[\textnormal{(i)}]
		if $u \neq 0$, we have $\|u\|_{m(\cdot)} = \lambda $ if and only if $\rho_{m(\cdot)}(\frac{u}{\lambda}) = 1$;
		\item[\textnormal{(ii)}]
		$\|u\|_{m(\cdot)} < 1$ (resp. $> 1, = 1$) if and only if $\rho_{m(\cdot)}(u) < 1$ (resp. $> 1, = 1$);
		\item[\textnormal{(iii)}]
		if $\|u\|_{m(\cdot)} < 1$ then $\|u\|_{m(\cdot)}^{m^+} \leq \rho_{m(\cdot)} (u) \leq \|u\|_{m(\cdot)}^{m^-}$;
		\item[\textnormal{(iv)}]
		if $\|u\|_{m(\cdot)} > 1$ then $\|u\|_{m(\cdot)}^{m^-} \leq \rho_{m(\cdot)} (u) \leq \|u\|_{m(\cdot)}^{m^+}$;
		\item[\textnormal{(v)}]
		$\|u\|_{m(\cdot)} \rightarrow 0$ if and only if $\rho_{m(\cdot)} (u) \rightarrow 0$;
		\item[\textnormal{(vi)}]
		$\|u\|_{m(\cdot)} \rightarrow + \infty$ if and only if $\rho_{m(\cdot)} (u) \rightarrow + \infty$;
		\item[\textnormal{(vii)}]
		$\|u\|_{m(\cdot)} \rightarrow 1$ if and only if $\rho_{m(\cdot)} (u) \rightarrow 1$;
		\item[\textnormal{(viii)}]
		if $u_n \rightarrow u$ in $\Lp{m(\cdot)}$ we have $\rho_{m(\cdot)} (u_n) \rightarrow \rho_{m(\cdot)} (u)$.
	\end{enumerate}
\end{proposition}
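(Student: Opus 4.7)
The plan is to exploit the basic fact that, for any $u\neq 0$ in $\Lp{m(\cdot)}$, the auxiliary map $\varphi_u(\lambda):=\rho_{m(\cdot)}(u/\lambda)$ is continuous and strictly decreasing on $(0,\infty)$, with $\varphi_u(\lambda)\to 0$ as $\lambda\to\infty$ and $\varphi_u(\lambda)\to\infty$ as $\lambda\to 0^+$; these limits are standard consequences of the dominated and monotone convergence theorems applied to the integrand $|u(x)|^{m(x)}/\lambda^{m(x)}$, using $m(x)>1$. Items (i)--(vii) are routine once this is in hand, while item (viii) requires an honest continuity argument for the modular itself.

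For (i), the continuity and strict monotonicity of $\varphi_u$ force the sublevel set $\{\alpha>0:\varphi_u(\alpha)\le 1\}$ to be a closed half-line $[\lambda_0,\infty)$ on which $\varphi_u(\lambda_0)=1$, so by definition $\|u\|_{m(\cdot)}=\lambda_0$ is characterized by $\rho_{m(\cdot)}(u/\lambda_0)=1$; the converse direction is immediate from strict monotonicity. Item (ii) then follows by applying (i) with $\lambda=1$ and distinguishing the three cases against the monotonicity of $\varphi_u$. For items (iii) and (iv), I would use the scaling identity
\begin{equation*}
\rho_{m(\cdot)}(u)=\int_{\Omega}\lambda^{m(x)}\l|\frac{u}{\lambda}\r|^{m(x)}\diff x,\qquad \lambda:=\|u\|_{m(\cdot)},
\end{equation*}
combined with the fact $\rho_{m(\cdot)}(u/\lambda)=1$ from (i). If $\lambda<1$, then $\lambda^{m^+}\le\lambda^{m(x)}\le\lambda^{m^-}$ pointwise, and integrating yields $\lambda^{m^+}\le\rho_{m(\cdot)}(u)\le\lambda^{m^-}$; the case $\lambda>1$ reverses the pointwise inequalities and delivers (iv). Items (v), (vi), (vii) are then squeeze arguments from (iii)--(iv): for instance, if $\|u_n\|_{m(\cdot)}\to 0$ then eventually $\|u_n\|_{m(\cdot)}<1$, whence $\rho_{m(\cdot)}(u_n)\le\|u_n\|_{m(\cdot)}^{m^-}\to 0$, and conversely; analogous trappings handle the limits at $+\infty$ and at $1$.

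The main obstacle is (viii), which is a genuine continuity-of-the-modular statement rather than a comparison. My plan is a subsequence-extraction argument: assume $u_n\to u$ in $\Lp{m(\cdot)}$; then by (v) one has $\rho_{m(\cdot)}(u_n-u)\to 0$, and along any chosen subsequence, norm convergence in $\Lp{m(\cdot)}$ entails the existence of a further subsequence $u_{n_k}\to u$ a.e.\ together with a dominating function $g\in\Lp{m(\cdot)}$ satisfying $|u_{n_k}|\le g$ a.e.\ (this is the standard variable exponent analogue of the Riesz--Fischer subsequence principle, proved through $\rho_{m(\cdot)}$). The continuity of $t\mapsto|t|^{m(x)}$ yields pointwise convergence $|u_{n_k}|^{m(x)}\to|u|^{m(x)}$, and Lebesgue's dominated convergence theorem, with the $L^1(\Omega)$ majorant $g^{m(x)}$ (integrable because $g\in\Lp{m(\cdot)}$), gives $\rho_{m(\cdot)}(u_{n_k})\to\rho_{m(\cdot)}(u)$. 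Since every subsequence of $\{\rho_{m(\cdot)}(u_n)\}$ admits a further subsequence converging to the same limit $\rho_{m(\cdot)}(u)$, the whole numerical sequence converges, completing (viii).
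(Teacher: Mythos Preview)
The paper does not actually supply a proof of Proposition~\ref{Prop0}; it is recalled as a standard background fact from the variable exponent literature. Your argument is correct and is essentially the textbook one. For comparison, the paper \emph{does} prove the analogous statement for the multi-phase modular (Proposition~\ref{Prop2}), and there the treatment of (i)--(vii) is identical in spirit to yours: continuity and strict monotonicity of $\beta\mapsto\rho(\beta u)$ give (i)--(ii), the scaling inequalities $\beta^{p^-}\rho(u)\le\rho(\beta u)\le\beta^{r^+}\rho(u)$ (and their reversal for $\beta<1$) give (iii)--(iv), and (v)--(vii) follow by squeezing. The one genuine methodological difference is in (viii): you extract an a.e.\ convergent subsequence together with an $L^{m(\cdot)}$ majorant via the Riesz--Fischer principle and finish with dominated convergence, whereas the paper (for $\rho_{\mathcal{T}}$) obtains a.e.\ convergence through the embedding $L^{p(\cdot)}\hookrightarrow L^{p^-}$, then establishes uniform integrability of $\{\mathcal{T}(x,|u_n|)\}$ directly from the inequality $\mathcal{T}(x,|u_n|)\le C_\Delta[\mathcal{T}(x,|u_n-u|)+\mathcal{T}(x,|u|)]$ and concludes with Vitali's theorem. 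Both routes are valid; your dominated-convergence path is slightly slicker in the single-exponent case, while the Vitali route avoids invoking the existence of a dominating function and transfers verbatim to the multi-phase setting.
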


We recall that $L^{m(\cdot)}(\Omega)$ is a separable, uniformly convex and hence reflexive Banach space with dual space $L^{m(\cdot)}(\Omega)^* = L^{m^{\prime}(\cdot)}(\Omega)$.
Further, for all $u \in L^{m(\cdot)}(\Omega) $ and for all $v \in L^{m^{\prime}(\cdot)}(\Omega)$, the H\"{o}lder type inequality
\begin{align*}
	\int_\Omega |u v| \, \diff x \leq \l[ \frac{1}{m^-} + \frac{1}{(m^{\prime})^-}\r] \|u\|_{m} \|v\|_{m^{\prime}} \leq 2 \|u\|_{m} \|v\|_{m^{\prime}}
\end{align*}
holds true. We also recall that for given $m_1, m_2 \in C(\close)$ with $1 < m_1(x) \leq m_2(x)$ for all $x \in \close$, we have the continuous embedding of $\Lp{m_2(\cdot)}$ in $\Lp{m_1(\cdot)}$.

Using the Lebesgue space $L^{m(\cdot)}(\Omega)$, we define the corresponding Sobolev space with variable exponent $W^{1,m(\cdot)}(\Omega)$ by
\begin{align*}
W^{1,m(\cdot)}(\Omega) = \left\{u\in L^{m(\cdot)}(\Omega)\, :\, |\nabla u| \in L^{m(\cdot)}(\Omega)\right\}
\end{align*}
equipped with the norm
\begin{align*}
	\|u\|_{1,m(\cdot)} = \|u\|_{m(\cdot)} + \|\nabla u\|_{m(\cdot)},
\end{align*}
where $\|\nabla u\|_{m(\cdot)} = \|\,|\nabla u|\, \|_{m(\cdot)}$. In addition, we denote by $W^{1,m(\cdot)}_0(\Omega)$ the completion of $C^\infty_0(\Omega)$ in $W^{1,m(\cdot)}(\Omega)$.

We point out that both the spaces $W^{1,m(\cdot)}(\Omega)$ and $W^{1,m(\cdot)}_0(\Omega)$
are uniformly convex, separable and reflexive Banach spaces. Moreover, the Poincar\'{e} inequality is valid for $W^{1,m(\cdot)}_0(\Omega)$, that is, there exists $c > 0$ such that
\begin{align*}
	\|u\|_{m(\cdot)} \leq c \, \| \nabla u\|_{m(\cdot)}
\end{align*}
for all $u \in W^{1,m(\cdot)}_0(\Omega)$.

According to this, on $W^{1,m(\cdot)}_0(\Omega)$ we can use the equivalent norm
\begin{align*}
	\|u\|_{1, m(\cdot), 0} := \| \nabla u\|_{m(\cdot)} \quad \text{for all} \ u \in W^{1,m(\cdot)}_0(\Omega).
\end{align*}
In the sequel, given $m \in C(\close)$ such that $1 < m(x) < N$ for all $x \in \close$, we denote by $m^*(x)$ the critical variable Sobolev exponent corresponding to $m$, that is,
\begin{align*}
	m^*(x) : = \frac{N \, m(x)}{N - m(x)} \quad \text{for all} \ x \in \close.
\end{align*}
Also, on $\partial\Omega$ we consider the $(N-1)$-dimensional Hausdorff (surface) measure $\sigma(\cdot)$ and, using this measure, we define in the usual way the boundary Lebesgue spaces $L^{m(\cdot)}(\partial\Omega)$.

Let $C^{0, \frac{1}{|\log t|}}(\close)$ be the set of all functions $w: \close \rightarrow \R$ that are log-H\"{o}lder
continuous, that is, there exists $d_0 > 0$ such that
\begin{align} \label{log}
	|w(x) - w(y)| \leq \frac{d_0}{|\log |x - y||} \quad \text{for all} \ x, y \in \close \ \text{with} \ |x - y| < \frac{1}{2}.
\end{align}

Then, the following embedding results for variable exponent Sobolev spaces are valid, see for example Diening-Harjulehto-Hasto-Ruzicka \cite[Corollary $8.3.2$]{Diening-Harjulehto-Hasto-Ruzicka-2011} and Fan \cite[Proposition $2.1$]{Fan-2010}.

\begin{proposition} \label{0log}
	Let $m \in C(\close) \,\cap\, C^{0, \frac{1}{|\log t|}}(\close)$ be such that $m(x)>1$ for all $x\in\close$. In addition, let $l \in C(\close)$ be such that $1 \leq l(x) \leq m^*(x)$ for all $x \in \close$. Then, we have the continuous embedding
	\begin{align*}
		W^{1,m(\cdot)}(\Omega) \hookrightarrow L^{l(\cdot)} (\Omega).
	\end{align*}
	Further, if $1 \leq l(x) < m^*(x)$ for all $x \in \close$ then the above embedding is compact.
\end{proposition}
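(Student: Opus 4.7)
Since this proposition is essentially the standard variable-exponent Sobolev embedding theorem cited from Diening–Harjulehto–Hästö–R\r{u}\v{z}i\v{c}ka and Fan, my plan is to reduce it to the continuous embedding at the critical exponent and then derive the remaining cases by modular comparison and an interpolation-compactness argument.

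\medskip

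\textbf{Step 1: Reduction to the whole space.} The plan is to first extend $u \in W^{1,m(\cdot)}(\Omega)$ to a function $Eu \in W^{1,m(\cdot)}(\R^N)$ with compact support and with $\|Eu\|_{1,m(\cdot),\R^N} \le C\|u\|_{1,m(\cdot)}$. Since $\partial\Omega$ is Lipschitz and $m$ is log-Hölder continuous, the existence of such a bounded extension operator follows from the standard reflection-and-partition-of-unity construction (the log-Hölder continuity of $m$ preserves the modular bounds through the localization argument). After the extension, $m$ is regarded as a log-Hölder continuous exponent on $\R^N$ with $m^- > 1$.

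\medskip

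\textbf{Step 2: The critical embedding $W^{1,m(\cdot)}(\R^N)\hookrightarrow L^{m^*(\cdot)}(\R^N)$.} This is the main analytic step and relies crucially on the log-Hölder hypothesis (\ref{log}). The plan is to represent $Eu$ via the Riesz potential
\begin{equation*}
|Eu(x)| \le c_N \int_{\R^N} \frac{|\nabla Eu(y)|}{|x-y|^{N-1}}\,\diff y,
\end{equation*}
and then establish modular bounds of the form $\rho_{m^*(\cdot)}(Eu) \le C(1+\rho_{m(\cdot)}(|\nabla Eu|))$. The passage from $m(y)$ at the integration variable to $m^*(x)$ at the evaluation point is exactly where the log-Hölder condition enters: it compensates for $|x-y|^{N-1}$ appearing in the kernel through the estimate $|x-y|^{m(y)-m(x)} \le C$, which yields the desired pointwise control. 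From the modular estimate and Proposition \ref{Prop0}, one extracts the continuous norm inequality $\|u\|_{m^*(\cdot)} \le C\|u\|_{1,m(\cdot)}$.

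\medskip

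\textbf{Step 3: The general continuous embedding.} For arbitrary $l \in C(\close)$ with $1 \le l(x) \le m^*(x)$, I would invoke the continuous embedding $L^{m^*(\cdot)}(\Omega) \hookrightarrow L^{l(\cdot)}(\Omega)$ stated earlier in the text (this requires only $\Omega$ bounded and $l\le m^*$ pointwise, and follows from a straightforward modular comparison using $|u|^{l(x)} \le 1 + |u|^{m^*(x)}$). Composing with Step 2 gives the continuous embedding $W^{1,m(\cdot)}(\Omega) \hookrightarrow L^{l(\cdot)}(\Omega)$.

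\medskip

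\textbf{Step 4: Compactness when $l(x) < m^*(x)$.} By continuity of $l$ and $m^*$ and compactness of $\close$, there exists $\eta>0$ with $l(x)+\eta \le m^*(x)$ for all $x\in\close$. Given a bounded sequence $\{u_n\}$ in $W^{1,m(\cdot)}(\Omega)$, the plan is first to note it is bounded in $W^{1,m^-}(\Omega)$, so the classical Rellich–Kondrachov theorem yields a subsequence converging in $L^{m^-}(\Omega)$, in particular in $L^1(\Omega)$, and a.e. Combining a.e. convergence, the uniform $L^{m^*(\cdot)}$-bound from Step 2, and Vitali's convergence theorem (applied to the modular of $u_n-u$ with exponent $l(\cdot)$ and dominated by a higher-integrability gain $l+\eta \le m^*$), one concludes $\rho_{l(\cdot)}(u_n-u)\to 0$, hence $\|u_n-u\|_{l(\cdot)}\to 0$ by Proposition \ref{Prop0}(v).

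\medskip

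The hardest step is clearly Step 2: exploiting the log-Hölder continuity of $m$ to pass through the Riesz potential and obtain the scale-invariant modular estimate at the critical exponent. All remaining parts are modular-comparison and interpolation-type manipulations that are essentially formal once the critical embedding is in hand.
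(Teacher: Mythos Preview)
The paper does not prove this proposition at all: it is stated as a known result and simply referenced to Diening--Harjulehto--H\"ast\"o--R\r{u}\v{z}i\v{c}ka \cite[Corollary~8.3.2]{Diening-Harjulehto-Hasto-Ruzicka-2011} and Fan \cite[Proposition~2.1]{Fan-2010}. Your sketch is a faithful outline of the standard proof found in those references (extension to $\R^N$, Riesz-potential representation with the log-H\"older condition controlling the exponent mismatch, modular comparison for subcritical exponents, and an interpolation/Vitali argument for compactness), so there is nothing to compare against in the paper itself.
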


\begin{proposition} \label{gamma}
	Let $m \in C(\close) \cap W^{1, \gamma} (\Omega)$ for some $\gamma > N$ with  $m(x)>1$ for all $x\in\close$. In addition, let $l \in C(\close)$ be such that $1 \leq l(x) \leq m^*(x)$ for all $x \in \close$. Then, we have the continuous embedding
	\begin{align*}
		W^{1,m(\cdot)} (\Omega) \hookrightarrow L^{l(\cdot)} (\partial\Omega).
	\end{align*}
	Further, if $1 \leq l(x) < m^*(x)$ for all $x \in \close$ then the above embedding is compact.
\end{proposition}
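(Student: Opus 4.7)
The statement is the boundary-trace counterpart of Proposition \ref{0log}, so my plan is to follow the same localization-and-reduction scheme, replacing the interior Sobolev embedding with the classical trace theorem on each patch.

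The key preliminary step is to exploit the hypothesis $\gamma > N$. By Morrey's embedding $W^{1,\gamma}(\Omega) \hookrightarrow C^{0,\alpha}(\close)$ with $\alpha = 1 - N/\gamma > 0$, the exponent $m$ is H\"{o}lder continuous on $\close$; in particular its oscillation on small balls can be controlled uniformly. I would then cover $\close$ by finitely many open balls $B_1, \ldots, B_k$ chosen so small that, writing $m_i^- = \min_{\overline{B_i} \cap \close} m$ and $m_i^+ = \max_{\overline{B_i} \cap \close} m$, the gap $m_i^+ - m_i^-$ is as tiny as needed, and similarly for $l$.

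Next, fix a smooth partition of unity $\{\eta_i\}$ subordinate to $\{B_i\}$. For $u \in W^{1,m(\cdot)}(\Omega)$, write $u = \sum_i \eta_i u$. On each patch, the chain of embeddings $L^{m(\cdot)}(B_i \cap \Omega) \hookrightarrow L^{m_i^-}(B_i \cap \Omega)$ (valid since $m \geq m_i^-$ on $B_i$), combined with the modular-norm equivalences of Proposition \ref{Prop0}(iii)--(iv), yields $\eta_i u \in W^{1,m_i^-}(B_i \cap \Omega)$ with norm controlled by $\|u\|_{1,m(\cdot)}$. Since $B_i \cap \Omega$ is a Lipschitz domain, the classical trace theorem delivers $\eta_i u \in L^{s_i}(\partial\Omega \cap B_i)$ for any $s_i$ below the critical trace exponent associated to $m_i^-$. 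By taking $s_i \geq \max_{B_i \cap \partial\Omega} l$ — possible since the oscillation of $m$ and the distance between $l$ and the critical exponent can both be made arbitrarily small — one obtains a local $L^{l(\cdot)}$-estimate on $\partial\Omega \cap B_i$. Summing over $i$ and reconverting modular estimates back into Luxemburg norm produces the continuous embedding $W^{1,m(\cdot)}(\Omega) \hookrightarrow L^{l(\cdot)}(\partial\Omega)$.

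For the compact version under the strict condition $l(x) < m^*(x)$, the same scheme will work with the compact trace theorem applied patch by patch: any sequence bounded in $W^{1,m(\cdot)}(\Omega)$ is bounded in $W^{1,m_i^-}(B_i \cap \Omega)$ after localization, admits a convergent subsequence in $L^{s_i}(\partial\Omega \cap B_i)$, and a finite diagonal extraction across the patches produces a subsequence converging in $L^{l(\cdot)}(\partial\Omega)$, the passage from modular to norm convergence being handled by Proposition \ref{Prop0}(v). The main technical obstacle is the coordination of the varying exponents across overlapping patches — this is exactly where the stronger regularity $m \in W^{1,\gamma}(\Omega)$ with $\gamma > N$ is indispensable, since the uniform oscillation bound from Morrey's embedding is what allows one to choose the cover without losing control over the critical-exponent gap.
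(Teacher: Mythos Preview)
The paper does not give its own proof of this proposition; it is quoted as a known result and attributed to Fan \cite[Proposition~2.1]{Fan-2010} (see the sentence immediately preceding Propositions~\ref{0log} and~\ref{gamma}). Your localization-and-partition-of-unity scheme is precisely the standard mechanism behind such variable-exponent trace theorems, so in spirit you are reconstructing the cited reference rather than diverging from the paper.

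One substantive caution: as printed, the statement uses the \emph{interior} Sobolev critical exponent $m^*(x)=Nm(x)/(N-m(x))$, whereas the constant-exponent trace theorem you invoke on each patch only reaches $L^{s}(\partial\Omega)$ for $s$ up to the \emph{boundary} critical exponent $(N-1)m_i^-/(N-m_i^-)$, which is strictly smaller. Consequently your patchwise argument cannot cover the full range $l(x)\le m^*(x)$ claimed; the step ``taking $s_i \ge \max_{B_i\cap\partial\Omega} l$ is possible'' fails when $l$ lies between the trace and interior critical exponents. This is an imprecision in the paper's formulation (the cited source uses the trace critical exponent), not a defect in your strategy. For the subcritical compact part, and for the only use made of this proposition later (Proposition~\ref{embedding}(iv)--(v)), your argument is sound.
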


Next, we give some definitions which is need to introduce the Musielak-Orlicz spaces.

\begin{definition} \label{largedef} We have the following:
 \begin{enumerate}
	\item[\textnormal{(i)}] A continuous and convex function $\xi: [0, + \infty) \rightarrow [0, + \infty)$ is said to be a $\xi$-function if $\xi(0) = 0$ and $\xi(t) > 0$ for all $t > 0$.	
\item[\textnormal{(ii)}]
	A function $\xi: \Omega \times [0, + \infty) \rightarrow [0, + \infty)$ is said to be a generalized $\xi$-function if $\xi(\cdot, t)$ is measurable for all $t \geq 0$ and $\xi(x, \cdot)$ is a $\xi$-function for a.a. $x \in \Omega$. We denote the set of all generalized $\xi$-functions on $\Omega$ with $\Xi(\Omega)$.
	\item[\textnormal{(iii)}]
	A function $\xi \in \Xi(\Omega)$ is locally integrable if $\xi(\cdot, t) \in L^1(\Omega)$ for all $t > 0$.
	\item[\textnormal{(iv)}]
A function $\xi \in \Xi(\Omega)$ satisfies the $\Delta_2$-condition
	 if there exist a positive constant $a_1$ and a
	nonnegative function $h_1 \in L^1(\Omega)$ such that
	\begin{align*}
		\xi(x, 2t) \leq a_1 \xi(x, t) + h_1(x)
	\end{align*}
	for a.a. $x \in \Omega$ and all $t \geq 0$.
	\item[\textnormal{(v)}]
	Given $\xi, \zeta \in \Xi(\Omega)$ we say that $\xi$ is weaker than $\zeta$ (and we write $\xi \prec \zeta$), if there exist two
	positive constants $a_2, a_3$ and a nonnegative function $h_2 \in L^1(\Omega)$ such that
	\begin{align*}
		\xi(x, t) \leq a_2 \zeta\left(x, a_3 t\right) + h_2 (x)
	\end{align*}
for a.a. $x \in \Omega$ and all $t \geq 0$.
\item[\textnormal{(vi)}]
	A function $\xi: [0, + \infty) \rightarrow [0, + \infty)$ is said $N$-function if it is a $\xi$-function such that
\begin{align*}
	\lim_{t \to 0^+}\frac{\xi(t)}{t} = 0 \quad \text{and} \quad 		\lim_{t \to  + \infty}\frac{\xi(t)}{t} = + \infty.
\end{align*}
\item[\textnormal{(vii)}]
A function $\xi: \Omega \times [0, + \infty) \rightarrow [0, + \infty)$ is said to be a generalized $N$-function if $\xi(\cdot, t)$ is measurable for all $t \geq 0$ and $\xi(x, \cdot)$ is a $N$-function for a.a. $x \in \Omega$. We denote the set of all generalized $N$-functions on $\Omega$ with $N(\Omega)$.
\item[\textnormal{(viii)}]
	Let $\xi, \zeta \in N(\Omega)$. The function $\xi$ increases essentially slower than $\zeta$ near infinity, if for all $a_4 > 0$
\begin{align*}
	\lim_{t \to  + \infty} \frac{\xi\left(x, a_4 t\right)}{\zeta(x, t)} = 0 \quad \text{uniformly for a.a.} \ x \in \Omega.
\end{align*}
\item[\textnormal{(ix)}]
	A function $\xi \in N(\Omega)$ is said to be uniformly convex if for every $\eps > 0$ there exists $\eta_{\eps} > 0$ such that
\begin{align*}
	|t - s| \leq \eps \max \{t, s\} \quad \text{or} \quad \xi \l(x, \frac{t + s}{2}\r) \leq \l(1 - \eta_{\eps}\r) \frac{\xi(x, t) + \xi(x, s)}{2}
\end{align*}
for all $t, s \geq 0$ and for a.a. $x \in \Omega$.
	\end{enumerate}
\end{definition}
Finally, for a given $\xi \in \Xi(\Omega)$ we define the Musielak-Orlicz space $\Lp{\xi}$ by
\begin{align*}
	\Lp{\xi}
	=\left \{u\in M(\Omega)\,:\, \text{there exists} \, \alpha > 0 \ \text{so that} \ \rho_{\xi}(\alpha \,u)<+\infty \right \}
\end{align*}
equipped with the norm
\begin{align*}
	\|u\|_{\xi} := \inf \left \{ \tilde{\alpha} >0 \,:\, \rho_{\xi}\left(\frac{u}{\tilde{\alpha}}\right) \leq 1  \right \},
\end{align*}
where
\begin{align} \label{modular}
	\rho_{\xi}(u):=\into \xi(x,|u|)\,\diff x
\end{align}
is the corresponding modular. Then, the Musielak-Orlicz Sobolev space $W^{1,\xi}(\Omega)$ is given by
\begin{align*}
	W^{1,\xi}(\Omega)= \left \{u \in L^{\xi}(\Omega) \,:\, |\nabla u| \in L^{\xi}(\Omega) \right\}
\end{align*}
endowed with the norm
\begin{align*}
	\|u\|_{1,\xi}:= \|\nabla u \|_{\xi}+\|u\|_{\xi},
\end{align*}
where $\|\nabla u\|_{\xi}:=\|\,|\nabla u|\,\|_{\xi}$. Further, for a locally integrable $\xi \in N(\Omega)$ we denote by $W^{1, \xi}_0(\Omega)$ the completion of $C^\infty_0(\Omega)$ in $W^{1, \xi}(\Omega)$.

We recall that for the Musielak-Orlicz space $\Lp{\xi}$ we have the following results, see for example Musielak \cite[Theorem $7.7$, Theorem $8.5$ and Theorem $8.13$]{Musielak-1983}.
\begin{proposition} \label{2.6} Let $\xi \in \Xi(\Omega)$. We have the following:
		\begin{enumerate}
		\item[\textnormal{(i)}]
	The Musielak-Orlicz space $\Lp{\xi}$ is complete with respect to the norm $\|\cdot\|_{\xi}$.
		\item[\textnormal{(ii)}]
		Let $\xi, \zeta \in \Xi(\Omega)$ be locally integrable with $\xi \prec \zeta$ (see Definition \ref{largedef}\,$(v)$), then we have the continuous embedding
		\begin{align*}
			\Lp{\zeta} \hookrightarrow \Lp{\xi}.
		\end{align*}
		\item[\textnormal{(iii)}] If $\xi$ satisfies the $\Delta_2$-condition (see Definition \ref{largedef}\,$(iv)$), then
	\begin{align*}
		\Lp{\xi}
		=\left \{u\in M(\Omega)\,:\, \rho_{\xi}(u)<+\infty \right \}.
	\end{align*}
	\item[\textnormal{(iv)}] If $u \in \Lp{\xi}$, then $\rho_{\xi}(u) < 1$ (resp. $= 1, > 1$) if and only if $\|u\|_{\xi}< 1$ (resp. $= 1, > 1$).
		\end{enumerate}
\end{proposition}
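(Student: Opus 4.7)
The four assertions are classical facts in the theory of Musielak-Orlicz spaces (they correspond to Theorems 7.7, 8.5 and 8.13 of \cite{Musielak-1983}), and my plan is to follow the standard blueprints for Luxemburg-norm arguments.

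For part (i) I would use the fast Cauchy subsequence trick. Given a Cauchy sequence $\{u_n\}\subset \Lp{\xi}$, extract a subsequence with $\|u_{n_{k+1}}-u_{n_k}\|_\xi<2^{-k}$. Using convexity of $\xi(x,\cdot)$ together with $\xi(x,0)=0$ one obtains $\rho_\xi(v)\le \|v\|_\xi$ whenever $\|v\|_\xi\le 1$, so the telescoping partial sums $g_m:=\sum_{k=1}^{m}|u_{n_{k+1}}-u_{n_k}|$ satisfy $\rho_\xi(g_m)\le 1$ uniformly in $m$. Fatou's lemma applied to the modular forces the series $\sum_k(u_{n_{k+1}}-u_{n_k})$ to converge absolutely a.e.\ in $\Omega$, so the subsequence admits a pointwise limit $u$. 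A further application of Fatou to $\rho_\xi\bigl((u-u_{n_K})/\eps\bigr)$ converts the norm-Cauchy property into $\|u-u_{n_K}\|_\xi\to 0$, and the original Cauchy property promotes this to norm convergence of the whole sequence.

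For parts (ii) and (iv) I would work directly at the modular level. In (ii), given $u\in\Lp{\zeta}$, the domination $\xi(x,t)\le a_2\zeta(x,a_3t)+h_2(x)$ coming from $\xi\prec\zeta$ integrates to
\begin{equation*}
\rho_\xi\bigl(u/(a_3\lambda)\bigr)\le a_2\,\rho_\zeta(u/\lambda)+\into h_2(x)\,\diff x
\end{equation*}
for every $\lambda>0$; choosing $\lambda$ proportional to $\|u\|_\zeta$ so as to force the right-hand side below $1$ yields a uniform bound $\|u\|_\xi\le C\|u\|_\zeta$, which gives both the set-theoretic inclusion and the continuity of the embedding. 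In (iv), the monotonicity and continuity of $\alpha\mapsto\rho_\xi(u/\alpha)$, combined with the convexity estimate $\rho_\xi(\tilde\alpha v)\le\tilde\alpha\,\rho_\xi(v)$ valid for $\tilde\alpha\in[0,1]$, allow one to pass between a norm condition and a modular condition; the three sub-cases $<1$, $=1$, $>1$ then follow from elementary monotonicity of the scaling parameter.

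For part (iii), the inclusion $\{u:\rho_\xi(u)<+\infty\}\subseteq\Lp{\xi}$ is immediate from the definition by taking $\alpha=1$. For the converse, pick $\alpha>0$ with $\rho_\xi(\alpha u)<+\infty$; the case $\alpha\ge 1$ follows from monotonicity of $\xi(x,\cdot)$, while for $\alpha<1$ I would iterate the $\Delta_2$-inequality $k$ times, with $k$ chosen so that $2^k\alpha\ge 1$, to obtain
\begin{equation*}
\xi(x,|u(x)|)\le \xi(x,2^k\alpha|u(x)|)\le a_1^k\,\xi(x,\alpha|u(x)|)+C_k\,h_1(x),
\end{equation*}
which integrates to $\rho_\xi(u)<+\infty$. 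This step is the genuine obstacle of the proposition: without the $\Delta_2$-hypothesis the space $\Lp{\xi}$ can be strictly larger than $\{u:\rho_\xi(u)<+\infty\}$, so the assumption must really be used here, and the bookkeeping of the constants $a_1^k$ and $C_k$ is the only nontrivial computation in the whole argument.
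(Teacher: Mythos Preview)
The paper does not actually prove this proposition: it is stated as background with a bare reference to Musielak \cite[Theorems 7.7, 8.5 and 8.13]{Musielak-1983}, and no argument whatsoever is given in the paper itself. Your proposal therefore goes beyond what the paper does by sketching the standard proofs, and your outlines for (i)--(iii) are correct and are precisely the classical arguments one finds in Musielak's monograph.

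For (iv) there is one subtlety in your sketch worth flagging. You invoke the ``continuity of $\alpha\mapsto\rho_\xi(u/\alpha)$'', but for a general $\xi\in\Xi(\Omega)$ this map is only guaranteed to be left-continuous (via monotone convergence); right-continuity at $\alpha=1$ can fail when $\rho_\xi(su)=+\infty$ for every $s>1$, and one can then manufacture $u\in\Lp{\xi}$ with $\rho_\xi(u)<1$ yet $\|u\|_\xi=1$ (take for instance $\Omega=(0,1)$, $\xi(x,t)=\eps t+x^{-1}\max(t-1,0)$ and $u\equiv 1$). The robust unconditional statement is the closed-ball equivalence $\rho_\xi(u)\le 1\Longleftrightarrow\|u\|_\xi\le 1$; upgrading to the strict-inequality version uses either the $\Delta_2$-condition or the knowledge that $\rho_\xi(su)<\infty$ for some $s>1$, so that the modular is finite, hence continuous, on a right-neighbourhood of $1$. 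This is more a wrinkle in how the paper records the statement than a defect in your reasoning, and in every later application in the paper the function $\mathcal{T}$ satisfies $\Delta_2$, so the issue never actually bites.
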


Also, the following result holds for the Musielak-Orlicz Sobolev spaces $ W^{1,\xi}(\Omega)$ and $W_0^{1,\xi}(\Omega)$, see Musielak \cite[Theorem $10.2$]{Musielak-1983}
and Fan \cite[Propositions $1.7$
and $1.8$]{Fan-2012}.

\begin{theorem} \label{2.10}
	Let $\xi \in N(\Omega)$ be locally integrable such that
	\begin{align}\label{inf}
		\inf_{x \in \Omega} \xi(x, 1) > 0.
	\end{align}
Then both the spaces $ W^{1,\xi}(\Omega)$ and $W_0^{1,\xi}(\Omega)$ are separable Banach spaces which are reflexive if $\Lp{\xi}$ is reflexive.
\end{theorem}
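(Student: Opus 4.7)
The strategy is to realize $W^{1,\xi}(\Omega)$ as a closed subspace of the product Musielak-Orlicz space $X := \Lp{\xi} \times \Lp{\xi}^N$ via the isometric embedding $T \colon u \mapsto (u, \nabla u)$, and then to transfer completeness, separability, and reflexivity from $\Lp{\xi}$ through $X$ to $W^{1,\xi}(\Omega)$. The single analytical input driving everything is the assumption $c_0 := \inf_{x \in \Omega} \xi(x, 1) > 0$. Combined with convexity of $\xi(x, \cdot)$ and $\xi(x, 0)=0$, which makes $t \mapsto \xi(x, t)/t$ nondecreasing, one obtains $\xi(x, t) \geq c_0 \, t$ for all $t \geq 1$. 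A splitting argument on $\{|u| \leq 1\}$ and $\{|u| > 1\}$ then yields a continuous embedding $\Lp{\xi} \hookrightarrow L^1(\Omega)$.

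First I would prove completeness of $W^{1,\xi}(\Omega)$. Take a Cauchy sequence $(u_n)$ in $W^{1,\xi}(\Omega)$. Then $(u_n)$ and $(\nabla u_n)$ are Cauchy in $\Lp{\xi}$ and, by Proposition \ref{2.6}(i), converge to some $u \in \Lp{\xi}$ and $v \in \Lp{\xi}^N$. Thanks to the $L^1$-embedding above, these convergences hold also in $L^1(\Omega)$, hence in $\mathcal{D}'(\Omega)$; passing to the limit in the distributional integration-by-parts identity satisfied by each $u_n$ gives $v = \nabla u$, whence $u_n \to u$ in $W^{1,\xi}(\Omega)$. The same argument shows that the graph $T(W^{1,\xi}(\Omega))$ is closed in $X$, so $T$ is an isometric isomorphism onto a closed subspace of $X$.

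Next I would transfer the remaining properties. The Musielak-Orlicz space $\Lp{\xi}$ is separable under the stated local integrability hypothesis (see Musielak \cite{Musielak-1983}, Theorem $7.7$); hence $X$ is separable, and so is every subspace of $X$, which yields separability of $W^{1,\xi}(\Omega)$. If in addition $\Lp{\xi}$ is reflexive, then $X$ is reflexive as a finite product of reflexive spaces, and closed subspaces of reflexive spaces are reflexive, so $W^{1,\xi}(\Omega)$ is reflexive. Finally, $W^{1,\xi}_0(\Omega)$ is by definition a closed subspace of $W^{1,\xi}(\Omega)$ and therefore inherits completeness, separability, and, when applicable, reflexivity.

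I expect the main obstacle to be the identification $v = \nabla u$ that appears both in the proof of completeness of $W^{1,\xi}(\Omega)$ and in the proof that $T(W^{1,\xi}(\Omega))$ is closed in $X$. This is precisely where the hypothesis \eqref{inf} enters in an essential way: without a lower bound on $\xi(\cdot, 1)$ one cannot in general embed $\Lp{\xi}$ into a space of distributions in which weak derivatives are stable under limits, and the closedness of the graph could fail. Everything else is a straightforward repackaging of standard Banach-space machinery once the embedding $\Lp{\xi} \hookrightarrow L^1(\Omega)$ is in place.
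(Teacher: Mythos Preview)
The paper does not supply its own proof of this theorem; it is recorded as a background result with citations to Musielak \cite[Theorem 10.2]{Musielak-1983} and Fan \cite[Propositions 1.7 and 1.8]{Fan-2012}. Your outline is precisely the standard argument underlying those references: realize $W^{1,\xi}(\Omega)$ isometrically as a closed subspace of $\Lp{\xi}\times\Lp{\xi}^N$ via $u\mapsto(u,\nabla u)$, use hypothesis \eqref{inf} together with convexity of $\xi(x,\cdot)$ to obtain $\Lp{\xi}\hookrightarrow L^1(\Omega)$ and hence identify distributional limits of gradients, and then pull back completeness, separability, and reflexivity from $\Lp{\xi}$ through the product to closed subspaces. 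One small bibliographic slip: Theorem 7.7 in Musielak is the completeness statement (as the paper itself records in Proposition \ref{2.6}(i)), not the separability statement, so you should point to the relevant separability theorem instead when you invoke it.
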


\section{A new Musielak-Orlicz Sobolev space}\label{sec3}

\quad\,
Let $\Omega \subseteq \mathbb{R}^N$ ($N \geq 2$) be a bounded domain with Lipschitz boundary $\partial \Omega$. Let $p(\cdot), \,q(\cdot), \,r(\cdot)$ and $\mu_1(\cdot), \, \mu_2(\cdot)$ be functions satisfying the following assumptions:
\\

\begin{enumerate}[label=\textnormal{(H$1$)},ref=\textnormal{H$1$}]
	\item\label{H1}
	$p, q, r \in C(\close)$ are such that $$1 < p(x) <N, \quad p(x) <q(x) < r(x) < p^*(x) $$ for all $x\in\close$, where $p^*(x):= \frac{N p(x)}{N-p(x)}$ is the critical exponent corresponding to $p(x)$, and $\mu_1(\cdot), \, \mu_2(\cdot) \in\Linf$ are such that $$ \mu_1(x)\geq 0 \quad \text{and} \quad \mu_2(x)  \geq 0$$
	for all $x\in\close$.
\end{enumerate}
Under hypotheses \eqref{H1}, we introduce the nonlinear function
\begin{align*}
	\mathcal{T} \colon \Omega \times [0, +\infty) \to [0, +\infty)
\end{align*}
defined by
\begin{align*}
	\mathcal{T}(x, t) =t^{p(x)} +\mu_1(x) t^{q(x)} +\mu_2(x) t^{r(x)}
\end{align*}
for all $ x\in \Omega $ and for all $ t \geq 0$. We point out that
 $\mathcal{T}$ is a locally integrable, generalized $N$-function (see Definition \ref{largedef} (iii), (vii)) satisfying \eqref{inf}. Further, $\mathcal{T}$ satisfies the $\Delta_2$-condition with constant $a_1:= C_{\Delta} \leq 2^{r^+}$ (see Definition \ref{largedef} (iv)). In fact, we have
\begin{align*}
	\mathcal{T}(x, 2t) = (2t)^{p(x)} +\mu_1(x) (2t)^{q(x)} +\mu_2(x) (2t)^{r(x)} < 2^{r^+} \mathcal{T}(x, t)
\end{align*}
for all $ x\in \Omega $ and for all $ t \geq 0$. Finally, we remark that
\begin{align} \label{additivity}
	\mathcal{T}(x, t + s) \leq C_{\Delta} \, [\mathcal{T}(x, t) + \mathcal{T}(x, s)]
\end{align}
is valid for all $ x\in \Omega $ and for all $ t, s \geq 0$.

For way of this, according to Proposition \ref{2.6} (iii), the Musielak-Orlicz space $L^{\mathcal{T}}(\Omega)$ is given by
\begin{align*}
	\Lp{\mathcal{T}}
	=\left \{u\in M(\Omega)\,:\, \rho_{\mathcal{T}}(u)<+\infty \right \}
\end{align*}
equipped with the Luxemburg norm
\begin{align*}
	\|u\|_{\mathcal{T}} := \inf \left \{ \alpha >0 \,:\, \rho_{\mathcal{T}}\left(\frac{u}{\alpha}\right) \leq 1  \right \},
\end{align*}
where the modular $\rho_{\mathcal{T}}(\cdot)$ is defined by
\begin{align*}
	\rho_{\mathcal{T}}(u) :=\into \mathcal{T}(x,|u|)\,\diff x=\into \l(|u|^{p(x)}+ \mu_1(x)|u|^{q(x)} + \mu_2(x)|u|^{r(x)}\r)\,\diff x
\end{align*}
(see \eqref{modular}). Using the Musielak-Orlicz space $L^{\mathcal{T}}(\Omega)$, we define the corresponding Musielak-Orlicz Sobo\-lev space $W^{1,\mathcal{T}}(\Omega)$ by
\begin{align*}
	W^{1,\mathcal{T}}(\Omega)= \left \{u \in L^{\mathcal{T}}(\Omega) \,:\, |\nabla u| \in L^{\mathcal{T}}(\Omega) \right\}
\end{align*}
endowed with the norm
\begin{align*}
	\|u\|_{1,\mathcal{T}}:= \|\nabla u \|_{\mathcal{T}}+\|u\|_{\mathcal{T}},
\end{align*}
where $\|\nabla u\|_{\mathcal{T}}:=\|\,|\nabla u|\,\|_{\mathcal{T}}$. In addition, as $\mathcal{T}$ is a locally integrable generalized $N$-function, we denote by $W^{1,\mathcal{T}}_0(\Omega)$ the completion of $C^\infty_0(\Omega)$ in $W^{1,\mathcal{T}}(\Omega)$.

Our aim here is to study the properties of spaces $\Lp{\mathcal{T}}$, $W^{1,\mathcal{T}}(\Omega)$ and $W^{1,\mathcal{T}}_0(\Omega)$.
We in particular extend the results of Crespo-Blanco-Gasinski-Harjulehto-Winkert \cite{Crespo-Blanco-Gasinski-Harjulehto-Winkert-2022}, concerning the properties of the function space,
to the variable exponent multi phase case.

At first, we show that the spaces $\Lp{\mathcal{T}}$,  $W^{1,\mathcal{T}}(\Omega)$ and $W^{1,\mathcal{T}}_0(\Omega)$ are reflexive Banach
spaces. We refer the reader to Colasuonno-Squassina \cite[Proposition $2.14$]{Colasuonno-Squassina-2016} for a similar result in the case of two constant exponents.

\begin{proposition} \label{prop1}
	Let hypotheses \eqref{H1} be satisfied. Then, the spaces $\Lp{\mathcal{T}}$, $W^{1,\mathcal{T}}(\Omega)$ and $W^{1,\mathcal{T}}_0(\Omega)$ are reflexive Banach
	spaces. Moreover, for any sequence $\{u_n\}_{n\in\N} \subseteq \Lp{\mathcal{T}}$ such that
		\begin{align*}
		u_n \weak u \quad \text{in } \Lp{\mathcal{T}}
		\quad\text{and}\quad
		\rho_{\mathcal{T}}\left(u_n\right) \to \rho_{\mathcal{T}}(u)
	\end{align*}
we have that $u_n \to u$ in $ \Lp{\mathcal{T}} $.
\end{proposition}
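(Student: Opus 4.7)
The strategy is to establish reflexivity via uniform convexity of $\mathcal{T}$ together with the Milman-Pettis theorem, and then to deduce the Kadec-Klee-type statement from a Clarkson-type modular inequality combined with weak lower semicontinuity of $\rho_{\mathcal{T}}$.

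For $\Lp{\mathcal{T}}$, the Banach property is granted by Proposition \ref{2.6}(i), so reflexivity reduces to verifying that $\mathcal{T}$ is uniformly convex in the sense of Definition \ref{largedef}(ix). For each fixed $m > 1$ a direct calculation shows that $t \mapsto t^m$ is uniformly convex with a modulus $\eta$ depending only on $m$ and $\varepsilon$; since $p(x), q(x), r(x) \in [p^-, r^+] \subset (1,\infty)$, a common modulus $\eta_\varepsilon$ can be chosen uniformly in $x \in \Omega$. Applying this pointwise to the three powers and summing with the non-negative weights $1, \mu_1(x), \mu_2(x)$ preserves the estimate, so $\mathcal{T}$ itself is uniformly convex with a modulus uniform in $x$. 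Combined with the $\Delta_2$-condition and local integrability already verified in the preceding discussion, Musielak's theory \cite{Musielak-1983} (see also \cite{Crespo-Blanco-Gasinski-Harjulehto-Winkert-2022}) then yields that $\Lp{\mathcal{T}}$ is a uniformly convex Banach space, hence reflexive by Milman-Pettis. For the Sobolev spaces I would use the isometry $u \mapsto (u, \nabla u)$ that embeds $W^{1,\mathcal{T}}(\Omega)$ into the reflexive product $\Lp{\mathcal{T}} \times \Lpvalued{\mathcal{T}}$; since $\mathcal{T}(x,t) \geq t^{p(x)}$ gives $\Lp{\mathcal{T}} \hookrightarrow L^{p(\cdot)}(\Omega) \hookrightarrow L^1_{\loc}(\Omega)$, convergence in $\Lp{\mathcal{T}}$ descends to $L^1_{\loc}$ and is compatible with distributional differentiation, so the image is closed and $W^{1,\mathcal{T}}(\Omega)$ is reflexive. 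The space $W^{1,\mathcal{T}}_0(\Omega)$ is then reflexive as a closed subspace of $W^{1,\mathcal{T}}(\Omega)$ by construction.

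For the final claim, the uniform convexity of $\mathcal{T}$ yields, by pointwise integration, a Clarkson-type modular inequality of the schematic form
\begin{equation*}
	\rho_{\mathcal{T}}\left(\frac{u_n - u}{2}\right) \leq C \left[ \frac{1}{2}\rho_{\mathcal{T}}(u_n) + \frac{1}{2}\rho_{\mathcal{T}}(u) - \rho_{\mathcal{T}}\left(\frac{u_n + u}{2}\right) \right]
\end{equation*}
for some constant $C > 0$. The convex functional $\rho_{\mathcal{T}}$ is norm-continuous on $\Lp{\mathcal{T}}$ by the $\Delta_2$-condition, hence weakly lower semicontinuous; since $(u_n + u)/2 \weak u$ in $\Lp{\mathcal{T}}$, this gives $\liminf_n \rho_{\mathcal{T}}((u_n + u)/2) \geq \rho_{\mathcal{T}}(u)$. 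Combined with the hypothesis $\rho_{\mathcal{T}}(u_n) \to \rho_{\mathcal{T}}(u)$, the bracketed expression on the right tends to $0$, so $\rho_{\mathcal{T}}((u_n - u)/2) \to 0$; by the $\Delta_2$-condition, modular null convergence is equivalent to norm null convergence, and we conclude $\|u_n - u\|_{\mathcal{T}} \to 0$.

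The main obstacle is the rigorous derivation of the Clarkson-type integral inequality with a modulus uniform in $x \in \Omega$ from the pointwise dichotomy in Definition \ref{largedef}(ix). The delicate step is to split $\Omega$ into the region where $|u_n - u|$ is small relative to $\max\{|u_n|, |u|\}$ (on which the bound is automatic up to a controllable error) and its complement (where the uniform-convexity drop is active), absorbing all constants using the compactness of the range of $p(\cdot), q(\cdot), r(\cdot)$ in $[p^-, r^+]$ and the boundedness of $\mu_1, \mu_2$ in $\Linf$.
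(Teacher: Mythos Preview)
Your treatment of reflexivity is essentially the paper's own: both arguments reduce to verifying that $\mathcal{T}$ is uniformly convex in the sense of Definition~\ref{largedef}(ix), invoke the Diening--Harjulehto--H\"{a}st\"{o}--R\r{u}\v{z}i\v{c}ka machinery to conclude that $\Lp{\mathcal{T}}$ is uniformly convex, and then pass to the Sobolev spaces. The paper carries out the pointwise verification more explicitly (via the convexity of $t\mapsto t^{p(x)/p^-}$ after invoking uniform convexity of $t\mapsto t^{p^-}$, and similarly for $q,r$), and it cites Theorem~\ref{2.10} rather than your product-embedding argument for $W^{1,\mathcal{T}}$ and $W_0^{1,\mathcal{T}}$, but these are the same ideas.

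For the Kadec--Klee assertion, however, your ``schematic'' Clarkson-type inequality
\[
\rho_{\mathcal{T}}\!\l(\frac{u_n-u}{2}\r)\le C\l[\tfrac12\rho_{\mathcal{T}}(u_n)+\tfrac12\rho_{\mathcal{T}}(u)-\rho_{\mathcal{T}}\!\l(\frac{u_n+u}{2}\r)\r]
\]
with a \emph{fixed} constant $C$ is false whenever $p^-<2$. Already for the scalar model $\mathcal{T}(x,t)=t^p$ with $1<p<2$, taking $a=1$, $b=1-\varepsilon$ gives left-hand side $\asymp\varepsilon^p$ while a second-order Taylor expansion shows the bracket on the right is $\asymp\varepsilon^2$; since $p<2$ the ratio blows up as $\varepsilon\to0$, so no fixed $C$ can work. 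Your proposed splitting in the last paragraph does not rescue this: on the region where $|u_n-u|$ is small relative to $\max\{|u_n|,|u|\}$ the pointwise inequality simply fails, it is not ``automatic up to a controllable error.'' The correct quantitative statement coming from Definition~\ref{largedef}(ix) is the \emph{modular} uniform convexity: for every $\varepsilon>0$ there exists $\delta(\varepsilon)>0$ such that, for bounded modulars, $\rho_{\mathcal{T}}((u_n-u)/2)>\varepsilon$ forces the bracket to exceed $\delta(\varepsilon)$. This nonlinear dependence on $\varepsilon$ is precisely what the paper obtains by citing \cite[Lemma~2.4.17 and Remark~2.4.19]{Diening-Harjulehto-Hasto-Ruzicka-2011}, and with it your weak-lower-semicontinuity argument would go through; but the linear inequality you wrote does not follow from uniform convexity of $\mathcal{T}$ and is in general wrong.
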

\begin{proof} We recall that Proposition \ref{2.6} $(i)$ and Theorem \ref{2.10} ensure that $\Lp{\mathcal{T}}$, $W^{1,\mathcal{T}}(\Omega)$ and $W^{1,\mathcal{T}}_0(\Omega)$ are Banach spaces. Now, in order to prove that such spaces are also reflexive it is sufficient to show that $\Lp{\mathcal{T}}$ is uniformly convex. This for way of the Milman-Pettis theorem which asserts that a uniformly convex Banach space is
always reflexive, see Papageorgiou-Winkert \cite[Theorem $3.4.28$]{Papageorgiou-Winkert-2018}. In addition, from the uniform convexity of $\Lp{\mathcal{T}}$, thanks to Theorem \ref{2.10}, we have that $W^{1,\mathcal{T}}(\Omega)$ and $W^{1,\mathcal{T}}_0(\Omega)$ are reflexive.
	
We point out that in order to establish the uniform convexity of $\Lp{\mathcal{T}}$, it is sufficient to prove that the
$N$-function $\mathcal{T}$ is uniformly convex, see Diening-Harjulehto-H\"{a}st\"{o}-R$\mathring{\text{u}}$\v{z}i\v{c}ka \cite[Definition 2.4.5, Theorems 2.4.11 and 2.4.14]{Diening-Harjulehto-Hasto-Ruzicka-2011}. Next, thanks to the results in \cite[Lemma $2.4.17$ and Remark $2.4.19$]{Diening-Harjulehto-Hasto-Ruzicka-2011}, we also get the second assertion in our proposition.
	
Therefore, we show that $\mathcal{T}$ is uniformly convex. With a view to
Definition \ref{largedef} (ix), let $\varepsilon > 0$ and $t, s \geq 0$ be such that $|t - s| > \varepsilon \max\{t, s \}$. Taking into account that the function $t \rightarrow t^m$ is uniformly convex for all $m > 1$ (see \cite[Remark
	2.4.6]{Diening-Harjulehto-Hasto-Ruzicka-2011}) and as $p(x) > 1$ for all $x \in \close$ (see \eqref{H1}), we know that there exists $\eta_{\eps, p^{-}} > 0$ such that
	\begin{align*}
	\l(\frac{t+s}{2}\r)^{p^-} \leq \left(1 - \eta_{\eps, p^{-}}\right) \frac{t^{p^-} + s^{p^-}}{2}.
\end{align*}
Further, thanks to the convexity of the function $t \rightarrow t^{\frac{p(x)}{p^-}} $, we have
	\begin{align*}
	\l(\frac{t+s}{2}\r)^{p(x)} \leq \l[\left(1 - \eta_{\eps, p^{-}}\right) \frac{t^{p^-} + s^{p^-}}{2}\r]^{\frac{p(x)}{p^-}} \leq (1 - \eta_{\eps, p^{-}}) \frac{t^{p(x)} + s^{p(x)}}{2}
\end{align*}
for all $x \in \Omega$.

Reasoning in a similar way for $q(\cdot)$ and $r(\cdot)$, we also get
	\begin{align*}
	\l(\frac{t+s}{2}\r)^{q(x)} \leq \left(1 - \eta_{\eps, q^{-}}\right) \frac{t^{q(x)} + s^{q(x)}}{2}
\end{align*}
and
\begin{align*}
	\l(\frac{t+s}{2}\r)^{r(x)} \leq \left(1 - \eta_{\eps, r^{-}}\right) \frac{t^{r(x)} + s^{r(x)}}{2}
\end{align*}
for some $\eta_{\eps, q^{-}} > 0$ and $\eta_{\eps, r^{-}} > 0$, respectively.

Consequently, if we put $\eta_{\eps} = \min\left\{\eta_{\eps, p^{-}}, \eta_{\eps, q^{-}}, \eta_{\eps, r^{-}}\right\}$, then we have
\begin{align*}
		 \mathcal{T} \l(x, \frac{t+s}{2}\r) &=
		\l(\frac{t+s}{2}\r)^{p(x)} + \mu_1 \l(\frac{t+s}{2}\r)^{q(x)} + \mu_2 \l(\frac{t+s}{2}\r)^{r(x)}\\
		& \leq \left(1 - \eta_{\eps}\right) \ \frac{t^{p(x)} + \mu_1 t^{q(x)} + \mu_2 t^{r(x)} + s^{p(x)} + \mu_1 s^{q(x)} + \mu_2 s^{r(x)}}{2}\\
		& = (1 - \eta_{\eps}) \ \frac{\mathcal{T}(x, t) + \mathcal{T}(x, s)}{2}.
\end{align*}
This guarantees that $\mathcal{T}$ is uniformly convex (see Definition \ref{largedef} (ix)) and so we have the claim.
\end{proof}

Next, we show that analogous results to the ones in Proposition \ref{Prop0} are valid for the norm $\|\cdot\|_{\mathcal{T}}$ and the corresponding modular $\rho_{\mathcal{T}}(\cdot)$. We mention Liu-Dai \cite[Proposition $2.1$]{Liu-Dai-2018} for a similar result in the case of two constant exponents.

\begin{proposition}\label{Prop2}
	Let hypotheses \eqref{H1} be satisfied. Then the following hold:
	\begin{enumerate}
		\item[\textnormal{(i)}]
		if $u \neq 0$, we have $\|u\|_{\mathcal{T}} = \lambda $ if and only if $\rho_{\mathcal{T}}\left(u/\lambda\right) = 1$;
		\item[\textnormal{(ii)}]
		$\|u\|_{\mathcal{T}} < 1$ (resp. $> 1, = 1$) if and only if $\rho_{\mathcal{T}}(u) < 1$ (resp. $> 1, = 1$);
		\item[\textnormal{(iii)}]
		if $\|u\|_{\mathcal{T}} < 1$ then $\|u\|_{\mathcal{T}}^{r^+} \leq \rho_{\mathcal{T}} (u) \leq \|u\|_{\mathcal{T}}^{p^-}$;
		\item[\textnormal{(iv)}]
		if $\|u\|_{\mathcal{T}} > 1$ then $\|u\|_{\mathcal{T}}^{p^-} \leq \rho_{\mathcal{T}} (u) \leq \|u\|_{\mathcal{T}}^{r^+}$;
		\item[\textnormal{(v)}]
		$\|u\|_{\mathcal{T}} \rightarrow 0$ if and only if $\rho_{\mathcal{T}} (u) \rightarrow 0$;
		\item[\textnormal{(vi)}]
		$\|u\|_{\mathcal{T}} \rightarrow + \infty$ if and only if $\rho_{\mathcal{T}} (u) \rightarrow + \infty$;
		\item[\textnormal{(vii)}]
		$\|u\|_{\mathcal{T}} \rightarrow 1$ if and only if $\rho_{\mathcal{T}} (u) \rightarrow 1$;
		\item[\textnormal{(viii)}]
		if $u_n \rightarrow u$ in $\Lp{\mathcal{T}}$ we have $\rho_{\mathcal{T}} \left(u_n\right) \rightarrow \rho_{\mathcal{T}} (u)$.
	\end{enumerate}
\end{proposition}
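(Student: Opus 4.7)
The plan is to derive everything from a single scaling lemma for the modular, proving the items in an order slightly different from their listing. Concretely, the backbone is the pointwise bound
\begin{align*}
\min\{\alpha^{p^-},\alpha^{r^+}\}\,\mathcal{T}(x,t)\ \leq\ \mathcal{T}(x,\alpha t)\ \leq\ \max\{\alpha^{p^-},\alpha^{r^+}\}\,\mathcal{T}(x,t),
\end{align*}
valid for all $\alpha,t\geq 0$ and $x\in\Omega$, which follows term by term because each summand has the form $c(x)\,t^{m(x)}$ with $m(x)\in[p^-,r^+]$, so $\alpha^{m(x)}$ lies between $\alpha^{p^-}$ and $\alpha^{r^+}$. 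Integrating this yields for every $u\in L^{\mathcal{T}}(\Omega)$
\begin{align*}
\alpha\leq 1:\quad \alpha^{r^+}\rho_{\mathcal{T}}(u)\leq\rho_{\mathcal{T}}(\alpha u)\leq\alpha^{p^-}\rho_{\mathcal{T}}(u),\\
\alpha\geq 1:\quad \alpha^{p^-}\rho_{\mathcal{T}}(u)\leq\rho_{\mathcal{T}}(\alpha u)\leq\alpha^{r^+}\rho_{\mathcal{T}}(u).
\end{align*}

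Next I would prove (i). For $u\neq 0$ the map $\lambda\mapsto\rho_{\mathcal{T}}(u/\lambda)$ is strictly decreasing on $(0,+\infty)$ (since $\lambda\mapsto\lambda^{-m(x)}$ is so wherever $|u(x)|>0$) and continuous, by dominated convergence using the $\Delta_2$-condition to dominate $\mathcal{T}(x,|u|/\lambda_n)$ on compact intervals of $(0,\infty)$ by a multiple of $\mathcal{T}(x,|u|/\lambda)\in L^1(\Omega)$. Moreover $\rho_{\mathcal{T}}(u/\lambda)\to+\infty$ as $\lambda\to 0^+$ and $\to 0$ as $\lambda\to+\infty$. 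Hence the level $\{\lambda:\rho_{\mathcal{T}}(u/\lambda)\leq 1\}$ is a half-line $[\lambda^*,\infty)$ on whose left endpoint the modular equals $1$, and by the definition of the Luxemburg norm $\|u\|_{\mathcal{T}}=\lambda^*$. Applying the scaling lemma with $\alpha=\|u\|_{\mathcal{T}}$ to $v:=u/\|u\|_{\mathcal{T}}$, for which $\rho_{\mathcal{T}}(v)=1$, produces at once (iii) and (iv). Item (ii) is just Proposition~\ref{2.6}\,(iv) but also follows directly from (iii)-(iv) together with (i). From (iii)-(iv), items (v) and (vi) come out by a sandwich argument: if $\|u_n\|_{\mathcal{T}}\to 0$ then eventually the norm is $<1$ and $\rho_{\mathcal{T}}(u_n)\leq\|u_n\|_{\mathcal{T}}^{p^-}\to 0$, and conversely by the lower bound $\rho_{\mathcal{T}}(u_n)\geq\|u_n\|_{\mathcal{T}}^{r^+}$; (vi) is entirely analogous using the $\|\cdot\|_{\mathcal{T}}>1$ regime.

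For (vii), the forward direction again follows from (iii)-(iv), since both the exponents $p^-$ and $r^+$ force $\lambda^{p^-},\lambda^{r^+}\to 1$ as $\lambda\to 1$ from either side; the converse I would argue by contradiction, extracting a subsequence bounded away from $1$ and using (iii) or (iv) to contradict $\rho_{\mathcal{T}}(u_n)\to 1$. The main obstacle is (viii), since from $\|u_n-u\|_{\mathcal{T}}\to 0$ one only gets $\rho_{\mathcal{T}}(u_n-u)\to 0$ via (v), which does not immediately split the integrand $\mathcal{T}(x,|u_n|)$. My plan here is the standard subsequence-of-subsequence argument: given any subsequence, use the completeness and the $\Delta_2$-condition to build a further subsequence $\{u_{n_k}\}$ with $\sum\|u_{n_{k+1}}-u_{n_k}\|_{\mathcal{T}}<\infty$, from which $g:=|u_{n_1}|+\sum_{k}|u_{n_{k+1}}-u_{n_k}|$ lies in $L^{\mathcal{T}}(\Omega)$ (so $\mathcal{T}(\cdot,g)\in L^1(\Omega)$ by Proposition~\ref{2.6}\,(iii)) and dominates $|u_{n_k}|$, while also forcing $u_{n_k}\to u$ almost everywhere; pointwise continuity of $\mathcal{T}(x,\cdot)$ and Lebesgue's dominated convergence then give $\rho_{\mathcal{T}}(u_{n_k})\to\rho_{\mathcal{T}}(u)$, and since every subsequence admits such a further subsequence with the same limit, the whole sequence converges. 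This is where the $\Delta_2$-condition and the additivity bound \eqref{additivity} are indispensable, as they ensure that the dominating function constructed in norm also dominates in modular.
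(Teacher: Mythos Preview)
Your argument is correct. For items (i)--(vii) it is essentially the paper's proof: both hinge on the same scaling inequalities
\[
\alpha\leq 1:\ \alpha^{r^+}\rho_{\mathcal{T}}(u)\leq\rho_{\mathcal{T}}(\alpha u)\leq\alpha^{p^-}\rho_{\mathcal{T}}(u),\qquad
\alpha\geq 1:\ \alpha^{p^-}\rho_{\mathcal{T}}(u)\leq\rho_{\mathcal{T}}(\alpha u)\leq\alpha^{r^+}\rho_{\mathcal{T}}(u),
\]
together with the strict monotonicity and continuity of $\lambda\mapsto\rho_{\mathcal{T}}(u/\lambda)$, and the deductions (ii)--(vii) from (i), (iii), (iv) are organized the same way.

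The only genuine divergence is in (viii). The paper proceeds via Vitali's theorem: from $\rho_{\mathcal{T}}(u_n-u)\to 0$ it passes to $\|u_n-u\|_{p^-}\to 0$ through the embedding $L^{\mathcal{T}}\hookrightarrow L^{p(\cdot)}\hookrightarrow L^{p^-}$, extracts an a.e.\ convergent subsequence, and then reads off uniform integrability of $\{\mathcal{T}(\cdot,|u_n|)\}$ directly from the single pointwise bound \eqref{additivity}. You instead build an explicit majorant: choose a rapidly Cauchy subsequence, set $g=|u_{n_1}|+\sum_k|u_{n_{k+1}}-u_{n_k}|\in L^{\mathcal{T}}(\Omega)$, and apply dominated convergence with dominating function $\mathcal{T}(\cdot,g)$. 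Both are standard and valid. The paper's route is marginally shorter here because the uniform-integrability estimate is a one-liner from \eqref{additivity}, while your construction needs the extra step $\sup_K\|g_K\|_{\mathcal{T}}<\infty\Rightarrow\rho_{\mathcal{T}}(g)<\infty$ (which indeed uses the $\Delta_2$-condition via Proposition~\ref{2.6}\,(iii), as you correctly flag). On the other hand, your approach avoids invoking Vitali and handles the a.e.\ identification of the limit in one stroke, since finiteness of $g$ forces $u_{n_k}$ to be Cauchy a.e.
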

\begin{proof} For a given $u \in \Lp{\mathcal{T}}$, we have that the function $\rho_{\mathcal{T}} (\beta u)$ is continuous, convex and even
	in the variable $\beta$. Further, $\rho_{\mathcal{T}} (\beta u)$ is strictly increasing when $\beta \in [0, + \infty)$. Hence, from the definition of $\|\cdot\|_{\mathcal{T}}$, we easily deduce that $\|u\|_{\mathcal{T}} = \lambda $ if and only if $\rho_{\mathcal{T}}(\frac{u}{\lambda}) = 1$, that is, $(i)$ holds.
	
Now, taking into account that $(ii)$ follows from $(i)$, we prove $(iii)$. We remark that for a given $\beta > 1$ the following inequality
	\begin{align*}
		\beta^{p^-} \rho_{\mathcal{T}}(u) \leq \rho_{\mathcal{T}}(\beta u) \leq \beta^{r^+} \rho_{\mathcal{T}}(u)
	\end{align*}
is valid for all $u \in \Lp{\mathcal{T}}$. With a view to this, we take $u \in \Lp{\mathcal{T}}$ such that $\|u\|_{\mathcal{T}} = \lambda $ with $0 < \lambda < 1$. Then, thanks to $(i)$ we have $\rho_{\mathcal{T}}\left( u/\lambda\right) = 1$. Further, since $ 1/\lambda > 1$ we have
\begin{align*}
		 \frac{\rho_{\mathcal{T}}(u)}{\lambda^{p^-}} \leq \rho_{\mathcal{T}}\l(\frac{u}{\lambda}\r) = 1 \leq \frac{\rho_{\mathcal{T}}(u)}{\lambda^{r^+}}
\end{align*}
which gives us
\begin{align*}
\|u\|_{\mathcal{T}}^{r^+} =  \lambda^{r^+} \leq \rho_{\mathcal{T}}(u) \leq \lambda^{p^-} = \|u\|_{\mathcal{T}}^{p^-},
\end{align*}
that is, we deduce that $(iii)$ holds. Now, we point out that from $(iii)$ we can easily deduce $(v)$. In addition, we know that for a given $0 < \beta < 1$ the following inequality
\begin{align*}
	\beta^{r^+} \rho_{\mathcal{T}}(u) \leq \rho_{\mathcal{T}}(\beta u) \leq \beta^{p^-} \rho_{\mathcal{T}}(u)
\end{align*}
is valid for all $u \in \Lp{\mathcal{T}}$. Then, using this inequality and reasoning as done in order to show $(iii)$, we can obtain $(iv)$. Next, from $(iv)$ we easily deduce that $(vi)$ holds too. In addition, $(iii)$ and $(iv)$ permit to infer $(vii)$.

Therefore, we must only prove $(viii)$. To this end, we consider a sequence $\left\{u_n\right\}_{n \in \N} \subseteq \Lp{\mathcal{T}}$ such that $u_n \rightarrow u$ in $\Lp{\mathcal{T}}$. Thanks to $(v)$ we know that $\rho_{\mathcal{T}}\left(u_n - u\right) \to 0$. Hence, since all the addends in $\rho_{\mathcal{T}}(\cdot)$ are positive, we deduce that $\rho_{p(\cdot)}\left(u_n - u\right) \to 0$. Taking into account this, Proposition \ref{Prop0} $(v)$ and the continuous embedding $L^{p(\cdot)} (\Omega) \hookrightarrow L^{p^-} (\Omega) $ (see Section \ref{sec2}) give that $\left\Vert u_n - u\right\Vert_{p^-} \to 0$. Thus, we have that $u_n \to u$ a.\,e. through a subsequence. Now, we recall that
\begin{eqnarray}
\left\vert u_n\right\vert^{p(x)}+ \mu_1(x)\left\vert u_n\right\vert^{q(x)} + \mu_2(x)\left\vert u_n\right\vert^{r(x)}
& \leq& 2^{r^+} \l(\left\vert u_n - u\right\vert^{p(x)} + | u|^{p(x)}\r)\nonumber\\
&  &+ 2^{r^+} \l(\mu_1(x)\left\vert u_n - u\right\vert^{q(x)} + \mu_1(x)|u|^{q(x)}\r)\nonumber\\
&  &+ 2^{r^+} \l(\mu_2(x)\left\vert u_n - u\right\vert^{r(x)} + \mu_2(x)| u|^{r(x)} \r).\nonumber
\end{eqnarray}
Hence, since $\rho_{\mathcal{T}}\left(u_n - u\right) \to 0$ we can deduce that
\begin{align*}
	\left\{\left\vert u_n\right\vert^{p(x)}+ \mu_1(x)\left\vert u_n\right\vert^{q(x)} + \mu_2(x)\left\vert u_n\right\vert^{r(x)}\right\}_{n \in \N}
\end{align*}
is an uniformly integrable sequence which converges a.e. to
$$|u|^{p(x)}+ \mu_1(x)|u|^{q(x)} + \mu_2(x)|u|^{r(x)}$$ by the a.\,e. convergence of $u_n \to u$.
Next, using Vitali's Theorem (see Bogachev \cite[Theorem 4.5.4]{Bogachev-2007}), we deduce that $\rho_{\mathcal{T}}\left(u_n\right) \to \rho_{\mathcal{T}}(u)$ through this subsequence. So, taking into account that it is possible to recover the whole sequence by the
subsequence principle, we conclude that $(viii)$ holds.
\end{proof}
In what follows, we consider the spaces
\begin{align*}
	L^{q(\cdot)}_{\mu_1}(\Omega) = \left\{u\in M(\Omega)\, :\, \int_\Omega \mu_1(x) \, |u|^{q(x)}\,\diff x < +\infty\right\},
\end{align*}
and
\begin{align*}
	L^{r(\cdot)}_{\mu_2}(\Omega) = \left\{u\in M(\Omega)\, :\, \int_\Omega \mu_2(x) \, |u|^{r(x)}\,\diff x < +\infty\right\}
\end{align*}
furnished with the seminorms
\begin{align*}
	\|u\|_{{q(\cdot)}, \mu_1}= \inf  \left\{ \alpha_1 > 0 \,:\,   \int_\Omega \mu_1(x) \, \l(\dfrac{|u|}{\alpha_1}\r)^{q(x)} \, \diff x \leq 1 \right\}
\end{align*}
and
\begin{align*}
	\|u\|_{{r(\cdot)}, \mu_2}= \inf  \left\{ \alpha_2 > 0 \,:\,   \int_\Omega \mu_2(x) \, \l(\dfrac{|u|}{\alpha_2}\r)^{r(x)} \, \diff x \leq 1 \right\},
\end{align*}
respectively.

Now, we show that the classical Sobolev embedding results extend to our Musielak-Orlicz and Musielak-Orlicz Sobolev spaces as follows.
\begin{proposition} \label{embedding}
	Let hypotheses \eqref{H1} be satisfied. Then the following hold:
		\begin{enumerate}
		\item[\textnormal{(i)}]
		 $\Lp{\mathcal{T}} \hookrightarrow \Lp{m(\cdot)}$,  $\Wp{\mathcal{T}}\hookrightarrow \Wp{m(\cdot)}$, $\Wpzero{\mathcal{T}}\hookrightarrow \Wpzero{m(\cdot)}$ are continuous for all $m \in C(\close)$ with $1 \leq m(x) \leq p(x)$ for all $x \in \close$;
		 \item[\textnormal{(ii)}]
		 if $p \in C^{0, \frac{1}{|\log t|}}(\close)$, then
		 $\Wp{\mathcal{T}}\hookrightarrow \Lp{m(\cdot)}$ and $\Wpzero{\mathcal{T}} \hookrightarrow \Lp{m(\cdot)}$ are continuous for all $m \in C(\close) $ with $ 1 \leq m(x) \leq p^*(x)$ for all $x \in \close$;
		\item[\textnormal{(iii)}]
		$\Wp{\mathcal{T}} \hookrightarrow \Lp{m(\cdot)}$ and $\Wpzero{\mathcal{T}} \hookrightarrow \Lp{m(\cdot)}$ are compact for all $m \in C(\close) $ with $ 1 \leq m(x) < p^*(x)$ for all $x \in \close$;
		\item[\textnormal{(iv)}]
		if $p \in W^{1, \gamma}(\Omega)$ for some $\gamma > N$, then
		$\Wp{\mathcal{T}}\hookrightarrow L^{m(\cdot)}(\partial\Omega)$ and $\Wpzero{\mathcal{T}} \hookrightarrow L^{m(\cdot)}(\partial\Omega)$ are continuous for all $m \in C(\close) $ with $ 1 \leq m(x) \leq p^*(x)$ for all $x \in \close$;
	\item[\textnormal{(v)}]
		$\Wp{\mathcal{T}}\hookrightarrow L^{m(\cdot)}(\partial\Omega)$ and $\Wpzero{\mathcal{T}} \hookrightarrow L^{m(\cdot)}(\partial\Omega)$ are compact for all $m \in C(\close) $ with $ 1 \leq m(x) < p^*(x)$ for all $x \in \close$;
		\item[\textnormal{(vi)}]
		$\Lp{\mathcal{T}} \hookrightarrow L^{q(\cdot)}_{\mu_1} (\Omega)$ and $\Lp{\mathcal{T}} \hookrightarrow L^{r(\cdot)}_{\mu_2} (\Omega)$ are continuous;
		\item[\textnormal{(vii)}]
	$ \Lp{r(\cdot)} \hookrightarrow \Lp{\mathcal{T}}$ is continuous.
	\end{enumerate}
\end{proposition}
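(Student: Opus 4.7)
The plan is to reduce every claim to an instance of Proposition \ref{2.6}(ii) via the $\prec$ relation of Definition \ref{largedef}(v), and then chain with the classical variable exponent Sobolev embeddings (Propositions \ref{0log} and \ref{gamma}) together with the standard Lebesgue inclusion $\Lp{p(\cdot)} \hookrightarrow \Lp{m(\cdot)}$ for $m\leq p$ recalled in Section \ref{sec2}.

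For \textnormal{(i)} I would first record the pointwise inequality $|t|^{p(x)} \leq \mathcal{T}(x,t)$, which means $|\cdot|^{p(\cdot)} \prec \mathcal{T}$ with $a_{2}=a_{3}=1$ and $h_{2}\equiv 0$; Proposition \ref{2.6}(ii) then yields $\Lp{\mathcal{T}} \hookrightarrow \Lp{p(\cdot)}$, and composing with $\Lp{p(\cdot)} \hookrightarrow \Lp{m(\cdot)}$ closes the Lebesgue case. The Sobolev statements in \textnormal{(i)} follow by applying the same inequality to both $u$ and $|\nabla u|$ and summing the Luxemburg norms; the same reasoning passes to the zero-trace closures because $C^{\infty}_{0}(\Omega)$ is the common dense subspace. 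Parts \textnormal{(ii)}--\textnormal{(v)} fall out by composition: the case $m=p$ of \textnormal{(i)} delivers $\Wp{\mathcal{T}} \hookrightarrow \Wp{p(\cdot)}$ and $\Wpzero{\mathcal{T}} \hookrightarrow \Wpzero{p(\cdot)}$, after which Propositions \ref{0log} and \ref{gamma} supply the continuous, respectively compact, embeddings into $\Lp{m(\cdot)}$ and $L^{m(\cdot)}(\partial\Omega)$ under the stated regularity hypotheses on $p(\cdot)$.

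For \textnormal{(vi)} I would use the pointwise bounds $\mu_{1}(x)|t|^{q(x)} \leq \mathcal{T}(x,t)$ and $\mu_{2}(x)|t|^{r(x)} \leq \mathcal{T}(x,t)$, which translate directly into modular inequalities $\int_{\Omega}\mu_{1}(x)|u/\alpha|^{q(x)}\diff x \leq \rho_{\mathcal{T}}(u/\alpha)$ and similarly for $\mu_{2}$; taking infima in the definitions of the Luxemburg-type seminorms then gives $\|u\|_{q(\cdot),\mu_{1}} \leq \|u\|_{\mathcal{T}}$ and $\|u\|_{r(\cdot),\mu_{2}} \leq \|u\|_{\mathcal{T}}$.

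The only step needing a short computation is \textnormal{(vii)}, and this is also the only point I expect to be mildly delicate. Since $\mu_{1},\mu_{2}\in\Linf$ and $1<p(x)\leq q(x)\leq r(x)$ on $\close$, the elementary bound $|t|^{s}\leq 1+|t|^{r(x)}$ valid for $1\leq s\leq r(x)$ (proved by splitting $|t|\leq 1$ and $|t|>1$) yields
\begin{equation*}
\mathcal{T}(x,t) \leq C_{0}\l(1+|t|^{r(x)}\r), \qquad C_{0}:=1+\|\mu_{1}\|_{\infty}+\|\mu_{2}\|_{\infty}.
\end{equation*}
Because $\Omega$ is bounded, the constant function $C_{0}$ belongs to $L^{1}(\Omega)$, so $\mathcal{T} \prec |\cdot|^{r(\cdot)}$ in the sense of Definition \ref{largedef}(v), and Proposition \ref{2.6}(ii) delivers $\Lp{r(\cdot)} \hookrightarrow \Lp{\mathcal{T}}$. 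Overall the proposition is a systematic exploitation of $\prec$-comparisons combined with previously recorded embedding theorems; the only real ingredient beyond routine book-keeping is the case split that produces the bound used in \textnormal{(vii)}.
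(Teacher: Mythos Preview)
Your proposal is correct and follows essentially the same route as the paper: reduce everything to the comparison $\prec$ and Proposition~\ref{2.6}(ii), then chain with the classical variable-exponent embeddings of Propositions~\ref{0log} and~\ref{gamma}. The only cosmetic difference is in \textnormal{(vi)}, where the paper invokes Proposition~\ref{Prop2}(i) to write $\rho_{\mathcal{T}}(u/\|u\|_{\mathcal{T}})=1$ and then reads off the seminorm bound, whereas you argue directly by inclusion of the defining sets of the Luxemburg infima; the two are equivalent, and your bound for \textnormal{(vii)} is the same as the paper's up to harmless regrouping of constants.
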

\begin{proof}
Taking into account that $\Omega$ is bounded, $(i)$ easily follows by the classical embedding results for variable exponent Lebesgue and Sobolev spaces.
 Indeed, we recall that the function $t \rightarrow t^{p(x)}$ for all $t \geq 0$ and all $x \in \close$ is weaker than $\mathcal{T}(x, t)$ (see Definition \ref{largedef} $(v)$). On account of this, thanks to Proposition \ref{2.6} $(ii)$, we know that $\Lp{\mathcal{T}} \hookrightarrow \Lp{p(\cdot)}$ is continuous. Then, applying the classical embedding results for variable exponent Lebesgue spaces we deduce that $\Lp{\mathcal{T}} \hookrightarrow \Lp{m(\cdot)}$ is continuous for all $m \in C(\close)$ with $1 \leq m(x)\leq p(x)$ for all $x \in \close$. Clearly, from this we in addition infer that the embeddings $\Wp{\mathcal{T}}\hookrightarrow \Wp{m(\cdot)}$ and $\Wpzero{\mathcal{T}}\hookrightarrow \Wpzero{m(\cdot)}$ are continuous.

Next, reasoning in a similar way as above but using Propositions \ref{0log} and \ref{gamma} instead of Proposition \ref{2.6}, we easily get that assertions $(ii)-(v)$ are also valid.

Now, we prove $(vi)$. Let $u \in \Lp{\mathcal{T}}$. We stress that
\begin{eqnarray}
	\int_\Omega \mu_1(x) \, |u(x)|^{q(x)} \, \diff x
	& \leq&\int_\Omega \l(|u(x)|^{p(x)} +  \mu_1(x) \, |u(x)|^{q(x)} + \mu_2(x) \, |u(x)|^{r(x)}\r) \, \diff x \nonumber \\
	&=& \rho_{\mathcal{T}}(u)\nonumber
\end{eqnarray}
and analogously,
\begin{align*}
	\int_\Omega \mu_2(x) \, |u(x)|^{r(x)} \, \diff x &\leq \rho_{\mathcal{T}}(u).
\end{align*}
Now, taking into account Proposition \ref{Prop2} $(i)$ we get
\begin{align*}
	\rho_{\mathcal{T}}\l(\frac{u}{\|u\|_{\mathcal{T}}}\r) = 1 \quad \text{whenever} \ u \neq 0,
\end{align*}
then we can affirm
\begin{align*}
	\int_\Omega \mu_1(x) \, \l(\frac{u(x)}{\|u\|_{\mathcal{T}}}\r)^{q(x)} \, \diff x \leq 1 \
\end{align*}
and
\begin{align*}
	\int_\Omega \mu_2(x) \, \l(\frac{u(x)}{\|u\|_{\mathcal{T}}}\r)^{r(x)} \, \diff x \leq 1
\end{align*}
for all $u \neq 0$. Hence, we conclude that
\begin{align*}
	\|u\|_{q(\cdot), \mu_1} \leq \|u\|_{\mathcal{T}} \quad \text{and} \quad \|u\|_{r(\cdot), \mu_2} \leq \|u\|_{\mathcal{T}}
\end{align*}
for all $u \in \Lp{\mathcal{T}}$. This clearly guarantees that $(vi)$ holds.

Lastly, we check $(vii)$. We note that
\begin{align*}
	\mathcal{T}(x, t) \leq & \ \l(1 + t^{r(x)}\r) + \mu_1(x) \l(1 + t^{r(x)}\r) + \mu_2(x) t^{r(x)}\\
	\leq & \ \l(1 + \left\Vert\mu_1\right\Vert_{\infty}\r) + \l(1 + \left\Vert\mu_1\right\Vert_{\infty} + \left\Vert\mu_2\right\Vert_{\infty}\r) t^{r(x)}
\end{align*}
for all $t \geq 0$ and for a.\,a. $x \in \Omega$. This clearly gives that
$\mathcal{T}(x, t) $ is weaker than the function $t \rightarrow t^{r(x)}$ for all $t \geq 0$ and all $x \in \close$ (see Definition \ref{largedef} $(v)$). Consequently, we can use Proposition \ref{2.6} $(ii)$ in order to conclude that the embedding $ \Lp{r(\cdot)} \hookrightarrow \Lp{\mathcal{T}}$ is continuous.

\end{proof}

Thanks to Proposition \ref{embedding}, we are able to state the following result.

\begin{proposition} \label{Poincare}
	Let hypotheses \eqref{H1} be satisfied. Then, the following hold:
	\begin{enumerate}
		\item[\textnormal{(i)}]
		$\Wp{\mathcal{T}} \hookrightarrow \Lp{\mathcal{T}}$ is a compact embedding;
		\item[\textnormal{(ii)}]
		there exists $\tilde{c} > 0$, independent of $u$, such that
		\begin{align*}
			\|u \|_{\mathcal{T}} \leq \tilde{c} \ \| \nabla u \|_{\mathcal{T}} \quad \text{for all} \ u \in \Wpzero{\mathcal{T}}.
		\end{align*}
			\end{enumerate}
\end{proposition}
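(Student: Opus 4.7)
The idea is to factor the desired embedding through $L^{r(\cdot)}(\Omega)$ and combine two results already available from Proposition \ref{embedding}. Hypothesis \eqref{H1} guarantees $r(x) < p^*(x)$ for every $x \in \close$, so Proposition \ref{embedding}(iii) with $m = r$ produces the compact embedding $\Wp{\mathcal{T}} \hookrightarrow \Lp{r(\cdot)}$. On the other hand, Proposition \ref{embedding}(vii) gives the continuous embedding $\Lp{r(\cdot)} \hookrightarrow \Lp{\mathcal{T}}$. Since the composition of a compact operator with a continuous one is compact, the target embedding $\Wp{\mathcal{T}} \hookrightarrow \Lp{\mathcal{T}}$ is compact. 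No delicate step is involved here; everything is pure bookkeeping on previously proved embeddings.

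\textbf{Plan for part (ii).} I would argue by contradiction, along the lines of the classical Rellich--Poincar\'e scheme. Assume no such $\tilde c$ exists; then there is a sequence $\{u_n\}\subset \Wpzero{\mathcal{T}}$ with $\|u_n\|_{\mathcal{T}} = 1$ and $\|\nabla u_n\|_{\mathcal{T}} \to 0$. In particular $\{u_n\}$ is bounded in $\Wpzero{\mathcal{T}}$, which is reflexive by Proposition \ref{prop1}, so after passing to a subsequence $u_n \weak u$ in $\Wpzero{\mathcal{T}}$. Using part (i) just established, $u_n \to u$ strongly in $\Lp{\mathcal{T}}$, whence $\|u\|_{\mathcal{T}} = 1$. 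Meanwhile $\nabla u_n \weak \nabla u$ in $\Lpvalued{\mathcal{T}}$, so by the (weak) lower semicontinuity of the norm
\begin{align*}
\|\nabla u\|_{\mathcal{T}} \leq \liminf_{n\to\infty} \|\nabla u_n\|_{\mathcal{T}} = 0,
\end{align*}
hence $\nabla u = 0$ a.e. and $u$ is constant on each connected component of $\Omega$.

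The final step is to upgrade \emph{``$u$ is constant and lies in $\Wpzero{\mathcal{T}}$''} to $u \equiv 0$, and this is the only point that is not purely formal. I would handle it through the continuous embedding $\Wpzero{\mathcal{T}} \hookrightarrow \Wpzero{p(\cdot)}$ supplied by Proposition \ref{embedding}(i): the limit $u$ belongs to $\Wpzero{p(\cdot)}$, and the classical Poincar\'e inequality on $\Wpzero{p(\cdot)}$ recalled in Section \ref{sec2} then gives $\|u\|_{p(\cdot)} \leq c\,\|\nabla u\|_{p(\cdot)} = 0$, so $u = 0$, contradicting $\|u\|_{\mathcal{T}} = 1$. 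The mild obstacle here is ensuring that the weak convergence in $\Wpzero{\mathcal{T}}$ really transfers the $W^{1,p(\cdot)}_0$-membership to the limit; this is automatic from the continuity of the embedding into the reflexive space $\Wpzero{p(\cdot)}$, which intertwines the two weak topologies.
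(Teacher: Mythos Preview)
Your proposal is correct and follows essentially the same route as the paper: part (i) is proved by composing the compact embedding $\Wp{\mathcal{T}}\hookrightarrow\Lp{r(\cdot)}$ from Proposition~\ref{embedding}(iii) with the continuous embedding $\Lp{r(\cdot)}\hookrightarrow\Lp{\mathcal{T}}$ from Proposition~\ref{embedding}(vii), and part (ii) is obtained by the same contradiction/normalization argument, using weak lower semicontinuity of $\|\nabla\cdot\|_{\mathcal{T}}$, the compact embedding from (i), and the inclusion $\Wpzero{\mathcal{T}}\hookrightarrow\Wpzero{p(\cdot)}$ to force the constant limit to vanish. The only cosmetic difference is the order in which you invoke the compact embedding and the weak lower semicontinuity, which is immaterial.
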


\begin{proof}
	We point out that Proposition \ref{embedding} $(iii)$ assures that $\Wp{\mathcal{T}} \hookrightarrow \Lp{r(\cdot)}$ is compact. In addition, from Proposition \ref{embedding} $(vii)$ we know that $\Lp{r(\cdot)} \hookrightarrow \Lp{\mathcal{T}}$ is continuous. According of all this, we can clearly conclude that $(i)$ holds.
	
	Therefore, we have only to prove $(ii)$. Now, we reason for way of contradiction and thus we suppose that $(ii)$ is not true. Hence, we can affirm that there exists a sequence $\left\{u_n\right\}_{n\in\N} \subseteq \Wpzero{\mathcal{T}}$ such that
	\begin{align} \label{n}
		\left\Vert u_n \right\Vert_{\mathcal{T}} > n \left\Vert u_n \right\Vert_{\mathcal{T}} \quad \text{for all} \ n \in \N.
	\end{align}
We put $y_n = u_n/\left\Vert u_n \right\Vert_{\mathcal{T}}$ for all $n\in\N$. Clearly, we have that $\left\Vert u_n \right\Vert_{\mathcal{T}}= 1$ for all $n \in \mathbb{N}$. Furthermore, by \eqref{n} we know that
\begin{align*}
	1 \geq \frac{1}{n} > \left\Vert u_n \right\Vert_{\mathcal{T}} \quad \text{for all} \ n \in \N
\end{align*}
and so, we have that $\left\{y_n\right\}_{n\in\N}$ is bounded in $\Wpzero{\mathcal{T}}$. Therefore, we can assume that
\begin{align*}
	y_n \weak y \quad \text{in } \ \Wpzero{\mathcal{T}} \quad \text{for some} \ y \in \Wpzero{\mathcal{T}}.
\end{align*}
Hence, using the weak lower semicontinuity of the mapping $u \rightarrow \left\Vert \nabla u \right\Vert_{\mathcal{T}} $, we get
\begin{align*}
	\| \nabla y \|_{\mathcal{T}} \leq \liminf_{n \to \infty} \left\vert \nabla y_n  \right\vert_{\mathcal{T}} \leq \inf \frac{1}{n} = 0.
\end{align*}
This gives that $y$ is a constant function. Taking into account that $y \in W^{1, p(\cdot)}_0 (\Omega)$ (see Proposition \ref{embedding} $(i)$) and the only constant function in $W^{1, p(\cdot)}_0 (\Omega)$ is trivially the null function, then we conclude that $y = 0$.
Now, we note that by $(i)$ we know that
\begin{align*}
	y_n \to y \quad \text{in } \ \Lp{\mathcal{T}}
\end{align*}
which gives
\begin{align*}
	 \lim_{n \to \infty} \left\vert y_n  \right\vert_{\mathcal{T}} = \| y \|_{\mathcal{T}}.
\end{align*}
But $\lim_{n \to \infty}  \left\vert y_n  \right\vert_{\mathcal{T}} = 1$ and this clearly implies $y \neq 0$. Thus, we arrive to a contradiction and hence we conclude that $(ii)$ holds.
\end{proof}

According to $(ii)$ of Proposition \ref{Poincare}, we can furnish the space $\Wpzero{\mathcal{T}}$ with the equivalent norm
\begin{align*}
	\|u\|_{1, \mathcal{T}, 0}:= \|\nabla u \|_{\mathcal{T}}\quad\text{for all }  u \in \Wpzero{\mathcal{T}}.
\end{align*}
We emphasize that such a norm is uniformly convex. In addition, we have that the Radon-Riesz property with respect to the modular is satisfied. In fact, the following result holds true.

\begin{proposition} \label{Radon-Riesz}
		Let hypotheses \eqref{H1} be satisfied. Then, the following hold:
		\begin{enumerate}
			\item[\textnormal{(i)}]
		 the norm $\|u\|_{1, \mathcal{T}, 0}$ on $\Wpzero{\mathcal{T}}$ is uniformly convex;
			\item[\textnormal{(ii)}] for any sequence $\left\{u_n\right\}_{n\in\N} \subseteq \Wpzero{\mathcal{T}}$ such that
			\begin{align*}
				u_n \weak u \quad \text{in }  \Wpzero{\mathcal{T}} \quad \text{and} \quad \rho_{\mathcal{T}}\left(\nabla u_n\right) \to \rho_{\mathcal{T}}(\nabla u)
			\end{align*}
		we have that $u_n \to u$ in $\Wpzero{\mathcal{T}}$.
		\end{enumerate}
\end{proposition}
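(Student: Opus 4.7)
The plan is to deduce both assertions from the corresponding vector-valued versions of the statements in Proposition \ref{prop1}, transferred to $\Wpzero{\mathcal{T}}$ through the gradient mapping. More precisely, I would introduce the vector-valued Musielak-Orlicz space $L^{\mathcal{T}}(\Omega;\R^N)$ consisting of measurable $v\colon \Omega \to \R^N$ with $\into \mathcal{T}(x,|v(x)|)\,\diff x < +\infty$, equipped with the Luxemburg norm $\|v\|_{L^{\mathcal{T}}(\Omega;\R^N)} := \inf\{\tilde{\alpha} > 0 : \into \mathcal{T}(x,|v|/\tilde{\alpha})\,\diff x \leq 1\}$. With this convention, $\|v\|_{L^{\mathcal{T}}(\Omega;\R^N)} = \|\,|v|\,\|_{\mathcal{T}}$, so the gradient $\nabla\colon (\Wpzero{\mathcal{T}},\|\cdot\|_{1,\mathcal{T},0}) \to L^{\mathcal{T}}(\Omega;\R^N)$ is a linear isometry, and both uniform convexity and the Radon-Riesz property transfer from the image back to $\Wpzero{\mathcal{T}}$.

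For part (i), I would replay the uniform convexity argument in the proof of Proposition \ref{prop1} in the vector-valued setting. The scalar inequality $\bigl((t+s)/2\bigr)^m \leq (1-\eta_{\eps,m})(t^m+s^m)/2$ used there, valid when $|t-s| > \eps\max\{t,s\}$, must be replaced by its Clarkson-type vector analog: for every $m > 1$ there exists $\eta_{\eps,m} > 0$ such that $|(\xi+\zeta)/2|^m \leq (1-\eta_{\eps,m})(|\xi|^m+|\zeta|^m)/2$ whenever $\xi,\zeta \in \R^N$ satisfy $|\xi-\zeta| > \eps\max\{|\xi|,|\zeta|\}$. Applying this to each of the three exponents $p(x),q(x),r(x)$ and taking $\eta_{\eps} := \min\{\eta_{\eps,p^{-}},\eta_{\eps,q^{-}},\eta_{\eps,r^{-}}\}$ exactly as in Proposition \ref{prop1} yields uniform convexity of the $N$-function $v \mapsto \mathcal{T}(x,|v|)$ on $\Omega \times \R^N$. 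Invoking \cite[Definition 2.4.5, Theorems 2.4.11 and 2.4.14]{Diening-Harjulehto-Hasto-Ruzicka-2011} then gives uniform convexity of $L^{\mathcal{T}}(\Omega;\R^N)$, and pulling back through the isometry $\nabla$ produces uniform convexity of $\|\cdot\|_{1,\mathcal{T},0}$ on $\Wpzero{\mathcal{T}}$.

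For part (ii), given a sequence $\{u_n\}_{n\in\N} \subseteq \Wpzero{\mathcal{T}}$ with $u_n \weak u$ in $\Wpzero{\mathcal{T}}$ and $\rho_{\mathcal{T}}(\nabla u_n) \to \rho_{\mathcal{T}}(\nabla u)$, I would first observe that the continuity of $\nabla$ gives $\nabla u_n \weak \nabla u$ in $L^{\mathcal{T}}(\Omega;\R^N)$. Then I would apply the vector-valued analog of the second assertion of Proposition \ref{prop1}: the proof given there carries over verbatim, using that the uniform convexity just established makes $L^{\mathcal{T}}(\Omega;\R^N)$ uniformly convex, that $\mathcal{T}$ still satisfies the $\Delta_2$-condition and the subadditivity bound \eqref{additivity}, that $\nabla u_n \to \nabla u$ a.e. through a subsequence by reduction to $L^{p(\cdot)}$ via Proposition \ref{embedding}(i), that the sequence $\mathcal{T}(x,|\nabla u_n|)$ is uniformly integrable by the $\Delta_2$ domination, and finally Vitali's theorem together with the subsequence principle. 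This gives $\nabla u_n \to \nabla u$ in $L^{\mathcal{T}}(\Omega;\R^N)$, which is exactly $u_n \to u$ in $\Wpzero{\mathcal{T}}$.

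The main obstacle is the vector-valued Clarkson-type inequality used in (i): the scalar version recorded in the proof of Proposition \ref{prop1} is sharp when the two non-negative scalars are equal, whereas on $\R^N$ two distinct vectors of equal modulus can still be far apart in the convex combination, so the vectorial refinement is genuinely needed to secure a uniform modulus $\eta_{\eps,m}$ depending only on $\eps$ and $m$. Once this ingredient is isolated, everything else is a transparent adaptation of the arguments already carried out for $\Lp{\mathcal{T}}$ in Proposition \ref{prop1}.
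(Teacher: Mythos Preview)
Your overall strategy---transferring properties from the vector-valued space $L^{\mathcal{T}}(\Omega;\R^N)$ to $\Wpzero{\mathcal{T}}$ through the linear isometry $\nabla$---is sound and differs from the paper's route, which simply invokes Theorems~2.4 and~3.5 of Fan--Guan \cite{Fan-Guan-2010} (following \cite[Proposition~2.19]{Crespo-Blanco-Gasinski-Harjulehto-Winkert-2022}). Your treatment of (i) is correct: the vectorial Clarkson-type inequality is the right refinement of the scalar one, and once $L^{\mathcal{T}}(\Omega;\R^N)$ is uniformly convex, so is every isometric image of a subspace.

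There is, however, a genuine gap in your argument for (ii). You propose to establish the vector-valued analog of the second assertion of Proposition~\ref{prop1} by invoking ``$\nabla u_n \to \nabla u$ a.e.\ through a subsequence by reduction to $L^{p(\cdot)}$ via Proposition~\ref{embedding}(i)''. But the hypotheses give only \emph{weak} convergence $\nabla u_n \rightharpoonup \nabla u$, and the embedding $\Lp{\mathcal{T}} \hookrightarrow \Lp{p(\cdot)}$ is merely continuous, hence preserves weak convergence without upgrading it: no pointwise convergence of the gradients is available at this stage. The steps you list (a.e.\ convergence, uniform integrability via $\Delta_2$, Vitali, subsequence principle) are exactly those used in Proposition~\ref{Prop2}(viii), which proves the \emph{opposite} implication (strong convergence $\Rightarrow$ modular convergence) and cannot be imported here.

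The actual mechanism behind the second assertion of Proposition~\ref{prop1}---and what \cite[Lemma~2.4.17, Remark~2.4.19]{Diening-Harjulehto-Hasto-Ruzicka-2011} supplies---is that uniform convexity of the $N$-function yields a modular midpoint inequality which, combined with weak lower semicontinuity of $\rho_{\mathcal{T}}$ and the hypothesis $\rho_{\mathcal{T}}(\nabla u_n)\to\rho_{\mathcal{T}}(\nabla u)$, forces $\rho_{\mathcal{T}}\bigl(\tfrac{\nabla u_n-\nabla u}{2}\bigr)\to 0$ directly; no pointwise information is needed. Once you replace the Vitali-based sketch by this argument (or simply cite the Diening et al.\ lemma in its vector-valued form), your proof of (ii) is complete.
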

\begin{proof} We can establish the claim thanks to Theorems $2.4$ and $3.5$ of Fan-Guan \cite{Fan-Guan-2010}, arguing like in the proof of Proposition $2.19$ of Crespo-Blanco-Gasinski-Harjulehto-Winkert \cite{Crespo-Blanco-Gasinski-Harjulehto-Winkert-2022}. In particular, we stress that assertion $(i)$ can be obtained applying Theorem $2.4$, while assertion $(ii)$ can be derived by Theorem $3.5$.
\end{proof}

Here, for any $s \in \R$ we put $s_\pm = \max \{ \pm s, 0 \}$, that means $s = s_+ - s_-$ and $|s| = s_+ + s_-$. Also, for any function $v \colon \Omega \to \R$ we write $v_\pm (\cdot)$ by $ [v(\cdot)]_\pm$.

The following proposition states that the spaces $\Wp{\mathcal{T}}$ and $\Wpzero{\mathcal{T}}$ are closed with respect to truncations.
\begin{proposition}
	Let hypotheses \eqref{H1} be satisfied. Then, the following hold:
	\begin{enumerate}
		\item[\textnormal{(i)}]
		if $u \in \Wp{\mathcal{T}}$, then $u_\pm \in \Wp{\mathcal{T}}$ with $\nabla\left(u_\pm\right) = \nabla u \,\chi_{\{\pm u > 0\}}$;
		\item[\textnormal{(ii)}]
		if $u_n \rightarrow u$ in $\Wp{\mathcal{T}}$, then $ \left(u_n\right)_\pm \rightarrow u_\pm$ in $\Wp{\mathcal{T}}$;
		\item[\textnormal{(iii)}]
		if $u \in \Wpzero{\mathcal{T}}$, then $ u_\pm \in \Wpzero{\mathcal{T}}$.
	\end{enumerate}
\end{proposition}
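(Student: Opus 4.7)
The plan is to reduce each part to classical truncation results in $\Wp{p(\cdot)}$ via Proposition \ref{embedding}(i), moving freely between modular and norm convergence through Proposition \ref{Prop2} together with the $\Delta_2$-property and local integrability of $\mathcal{T}$. For (i), given $u \in \Wp{\mathcal{T}}$, the embedding $\Wp{\mathcal{T}} \hookrightarrow \Wp{p(\cdot)}$ and the classical Stampacchia-type chain rule for Sobolev functions yield $u_\pm \in \Wp{p(\cdot)}$ with $\nabla(u_\pm) = \nabla u\,\chi_{\{\pm u > 0\}}$, and in particular $\nabla u = 0$ a.e.\ on $\{u = 0\}$. To upgrade membership to $\Wp{\mathcal{T}}$ it suffices to verify $u_\pm, \nabla(u_\pm) \in \Lp{\mathcal{T}}$; this is immediate from monotonicity of $\mathcal{T}(x, \cdot)$ in its second argument together with the pointwise bounds $|u_\pm| \le |u|$ and $|\nabla(u_\pm)| \le |\nabla u|$, which give $\rho_{\mathcal{T}}(u_\pm) \le \rho_{\mathcal{T}}(u) < \infty$ and similarly for the gradient.

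For (ii), let $u_n \to u$ in $\Wp{\mathcal{T}}$. The $1$-Lipschitzness of $t \mapsto t_\pm$ gives $|(u_n)_\pm - u_\pm| \le |u_n - u|$, so monotonicity of $\mathcal{T}$ yields $\rho_{\mathcal{T}}((u_n)_\pm - u_\pm) \le \rho_{\mathcal{T}}(u_n - u) \to 0$, whence $(u_n)_\pm \to u_\pm$ in $\Lp{\mathcal{T}}$ by Proposition \ref{Prop2}(v). For the gradient part I would decompose
\begin{align*}
\nabla((u_n)_+) - \nabla(u_+) = (\nabla u_n - \nabla u)\,\chi_{\{u_n > 0\}} + \nabla u\,\bigl(\chi_{\{u_n > 0\}} - \chi_{\{u > 0\}}\bigr)
\end{align*}
and estimate its modular via \eqref{additivity}. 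The first summand is controlled by $\rho_{\mathcal{T}}(\nabla u_n - \nabla u) \to 0$. For the second summand I would extract a subsequence along which $u_n \to u$ a.e.\ (as in the proof of Proposition \ref{Prop2}(viii)); the indicator difference then tends to $0$ a.e.\ on $\{u \neq 0\}$, while on $\{u = 0\}$ one has $\nabla u = 0$ a.e.\ by (i), so the whole summand vanishes a.e. Since it is dominated pointwise by $|\nabla u|$ and $\mathcal{T}(x, |\nabla u|) \in L^1(\Omega)$, the Lebesgue dominated convergence theorem yields modular, hence norm, convergence to $0$. The standard subsequence-of-every-subsequence argument promotes this to full-sequence convergence; the case $u_-$ is symmetric. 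This indicator-difference step is the main obstacle: handling it genuinely requires both the a.e.\ selection principle and a modular version of dominated convergence, which in turn relies essentially on the $\Delta_2$-condition and local integrability of $\mathcal{T}$ established in Section \ref{sec3}.

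For (iii), given $u \in \Wpzero{\mathcal{T}}$, pick $\varphi_n \in C^\infty_0(\Omega)$ with $\varphi_n \to u$ in $\Wp{\mathcal{T}}$. Each $(\varphi_n)_\pm$ is Lipschitz with compact support in $\Omega$, and mollification at a sufficiently small scale (again using $\Delta_2$ together with Proposition \ref{Prop2}(v) to pass from modular to norm convergence) produces smooth compactly supported approximants, so $(\varphi_n)_\pm \in \Wpzero{\mathcal{T}}$. By (ii), $(\varphi_n)_\pm \to u_\pm$ in $\Wp{\mathcal{T}}$, and since $\Wpzero{\mathcal{T}}$ is closed in $\Wp{\mathcal{T}}$ we conclude $u_\pm \in \Wpzero{\mathcal{T}}$.
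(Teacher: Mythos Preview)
Your proposal is correct and follows essentially the same route as the paper, which merely refers to Proposition~\ref{embedding} and the analogous argument in Crespo-Blanco--Gasi\'nski--Harjulehto--Winkert without spelling out the details you have provided. The only minor difference is in part~(iii): the paper passes through the embedding $L^{r(\cdot)}(\Omega)\hookrightarrow L^{\mathcal{T}}(\Omega)$ of Proposition~\ref{embedding}(vii) (so that $(\varphi_n)_\pm\in W^{1,r(\cdot)}_0(\Omega)$ by classical variable-exponent theory, hence in $W^{1,\mathcal{T}}_0(\Omega)$ via the continuous embedding), whereas you mollify directly in the $\mathcal{T}$-setting; both arguments work and are equivalent in spirit.
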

\begin{proof}
	We point out that the claim follows using Proposition \ref{embedding} and arguing similarly to the proof of Proposition $2.17$ of Crespo-Blanco-Gasinski-Harjulehto-Winkert \cite{Crespo-Blanco-Gasinski-Harjulehto-Winkert-2022}. To be more precise,
	we obtain assertions $(i)$ and $(ii)$ using Proposition \ref{embedding} $(i)$. We establish that $(iii)$ holds using Proposition \ref{embedding} $(viii)$.
\end{proof}


\section{The variable exponent multi phase operator $A_\mathcal{T}$}\label{sec4}

In this section, we introduce and study the operator $A_\mathcal{T}$ which is a multi phase operator with variable exponents. We in particular investigate its regularity properties. Precisely, we prove that it is bounded, continuous, strictly monotone, coercive and further it satisfies the $\left(S_+\right)$-property.

Before of defining the new operator, we recall some notions that will be used in the sequel. Here, given a Banach space $X$ and its dual space $X^*$, we denote by $\langle \cdot \,, \cdot\rangle$ the duality pairing between $X$ and $X^*$.

\begin{definition}\label{coercive}
	
Let $X$ be a reflexive Banach space and let $A\colon X\to X^*$. Then $A$ is called
	\begin{enumerate}
		\item[\textnormal{(i)}]
		to satisfy the $\Ss$-property if
		\begin{align*}
			u_n \weak u \ \text{in} \ X \quad \text{and} \quad \limsup_{n\to + \infty}\, \left\langle A\left(u_n\right),u_n-u\right\rangle \leq 0
		\end{align*}
		imply $u_n\to u$ in $X$;
		\item[\textnormal{(ii)}]
		pseudomonotone if
		\begin{align*}
			u_n \weak u \ \text{in} \ X \quad \text{and} \quad \limsup_{n\to + \infty}\, \left\langle A\left(u_n\right),u_n-u\right\rangle \leq 0
		\end{align*} imply
		\begin{align*}
			\liminf_{n \to +\infty} \left\langle A\left(u_n\right),u_n-v\right\rangle \geq \langle A(u) , u-v\rangle
		\end{align*}
	for all $v \in X$;
		\item[\textnormal{(iii)}]
		coercive if
		\begin{align*}
			\lim_{\|u\|_X\to +\infty }\frac{\langle A(u), u \rangle}{\|u\|_X}=+\infty.
		\end{align*}
	\end{enumerate}
\end{definition}

\begin{remark} \label{RemarkPseudomonotone}
	If the operator $A\colon X \to X^*$ is bounded, then the definition of pseudomonotone operator given in Definition \ref{coercive} $(ii)$ is equivalent to the following:
	
	the operator $A$ is pseudomonotone if
	\begin{align*}
		u_n \weak u \ \text{in} \ X \quad \text{and} \quad \limsup_{n\to \infty}\, \left\langle A\left(u_n\right),u_n-u\right\rangle \leq 0
	\end{align*}
imply
\begin{align*}
A\left(u_n\right) \weak A(u) \quad \text{and} \quad \left\langle A\left(u_n\right),u_n\right\rangle \to \langle A(u),u\rangle.
\end{align*}
\end{remark}

Also, we remark the important surjectivity property for pseudomonotone operators, see, for example, Papageorgiou-Winkert \cite[Theorem 6.1.57]{Papageorgiou-Winkert-2018}.

\begin{theorem}\label{theorem_pseudomonotone}
	Let $X$ be a real reflexive Banach space. In addition, let $A\colon X\to X^*$ be a pseudomonotone bounded coercive operator and $z \in X^*$. Then, there exists a solution of the equation $Au=z$.
\end{theorem}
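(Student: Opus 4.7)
This is the classical Brezis surjectivity theorem for pseudomonotone operators, and I would attack it via a Galerkin approximation scheme combined with the defining property of pseudomonotonicity. Since $X$ is separable (this can be assumed without loss of generality by restricting to the closed separable subspace generated by any countable dense set plus $z$-related data, or established via standard reductions for reflexive Banach spaces), fix an increasing sequence $\{X_n\}_{n\in\N}$ of finite-dimensional subspaces with $\overline{\bigcup_n X_n} = X$. Let $j_n\colon X_n \hookrightarrow X$ be the canonical injection and $j_n^*\colon X^* \to X_n^*$ its adjoint. The first step is to solve the finite-dimensional equations $j_n^* A(j_n u_n) = j_n^* z$ in each $X_n$.

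The second step is the finite-dimensional solvability. Equip $X_n$ with an inner product and identify $X_n \simeq X_n^*$; the map $\Phi_n\colon X_n \to X_n$ corresponding to $u \mapsto j_n^* A(j_n u) - j_n^* z$ is continuous (since $A$ is bounded and continuity on finite-dimensional spaces follows from pseudomonotonicity together with boundedness, via Remark \ref{RemarkPseudomonotone}). Coercivity of $A$ gives
\begin{align*}
(\Phi_n(u), u)_{X_n} = \langle A(j_n u), j_n u\rangle - \langle z, j_n u\rangle \geq \|j_n u\|_X\bigl(\tfrac{\langle A(j_n u), j_n u\rangle}{\|j_n u\|_X} - \|z\|_{X^*}\bigr),
\end{align*}
which is positive on a sufficiently large sphere of $X_n$. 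A standard consequence of Brouwer's fixed point theorem (the ``acute angle'' lemma) then produces $u_n \in X_n$ with $\Phi_n(u_n) = 0$, i.e.\ $\langle A(u_n), v\rangle = \langle z, v\rangle$ for every $v \in X_n$, where I identify $j_n u_n$ with $u_n \in X$. Coercivity of $A$, tested against $v = u_n$, then yields a uniform bound $\|u_n\|_X \leq M$.

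The third step passes to the limit. By reflexivity, extract a subsequence (not relabeled) with $u_n \weak u$ in $X$ for some $u \in X$. For any fixed $m$ and any $v \in X_m$, for $n \geq m$ one has $\langle A(u_n), v\rangle = \langle z, v\rangle$, so testing with $v = u_n - w_n$ where $w_n \in X_n$ is chosen so that $w_n \to u$ strongly in $X$ (possible by density of $\bigcup X_n$) gives
\begin{align*}
\langle A(u_n), u_n - u\rangle = \langle A(u_n), u_n - w_n\rangle + \langle A(u_n), w_n - u\rangle = \langle z, u_n - w_n\rangle + \langle A(u_n), w_n - u\rangle.
\end{align*}
The first term tends to $0$ since $u_n - w_n \weak 0$, and the second tends to $0$ since $\{A(u_n)\}$ is bounded in $X^*$ (by boundedness of $A$) and $w_n - u \to 0$ strongly. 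Therefore $\limsup_{n\to\infty}\langle A(u_n), u_n - u\rangle \leq 0$, and pseudomonotonicity (in the equivalent form of Remark \ref{RemarkPseudomonotone}) yields $A(u_n) \weak A(u)$ in $X^*$. Finally, for any fixed $v \in \bigcup_m X_m$, passing to the limit in $\langle A(u_n), v\rangle = \langle z, v\rangle$ gives $\langle A(u), v\rangle = \langle z, v\rangle$, and density concludes that $A(u) = z$.

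\textbf{Main obstacle.} The conceptually delicate step is the passage from the finite-dimensional Galerkin identity to the full equation $Au = z$: one needs both the uniform bound (from coercivity) and the crucial inequality $\limsup\langle A(u_n), u_n - u\rangle \leq 0$, which relies on the mixing of the Galerkin test $v \in X_m$ with an approximating sequence $w_n \to u$ from the union $\bigcup X_n$. Everything else (Brouwer-based existence, weak compactness, pseudomonotonicity in the Remark \ref{RemarkPseudomonotone} form) is then standard.
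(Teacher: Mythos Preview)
The paper does not prove this theorem at all: it merely records it as a known surjectivity result and refers the reader to Papageorgiou--Winkert \cite[Theorem 6.1.57]{Papageorgiou-Winkert-2018}. So there is no ``paper's proof'' to compare against; you are supplying an argument where the authors supply a citation.

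Your Galerkin outline is the standard textbook route and is essentially sound: finite-dimensional solvability via the acute-angle consequence of Brouwer, a uniform bound from coercivity, weak compactness by reflexivity, and the key inequality $\limsup_n \langle A(u_n), u_n - u\rangle \leq 0$ obtained by splitting $u_n - u = (u_n - w_n) + (w_n - u)$ with $w_n \in X_n$, $w_n \to u$. The invocation of Remark~\ref{RemarkPseudomonotone} to pass to the limit is legitimate.

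Two points are looser than they should be. First, your reduction to the separable case is circular as written (``any countable dense set'' presupposes separability); the honest fix is either to add separability as a hypothesis---harmless here since $W_0^{1,\mathcal{T}}(\Omega)$ is separable---or to cite the general version directly. Second, the continuity of $\Phi_n$ does not follow from Remark~\ref{RemarkPseudomonotone} alone: what you actually need is that a bounded pseudomonotone operator is demicontinuous (if $v_k \to v$ strongly then $\langle A(v_k), v_k - v\rangle \to 0$, whence $A(v_k) \weak A(v)$ by the remark), and on the finite-dimensional $X_n$ demicontinuity upgrades to continuity. These are expository refinements rather than genuine gaps.
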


Now, we consider the nonlinear operator
$A_{\mathcal{T}}\colon \Wpzero{\mathcal{T}}\to \Wpzero{\mathcal{T}}^*$ defined by
\begin{align*}
	\begin{split}
		\left\langle A_{\mathcal{T}}(u),v\right\rangle &=\\
		&\into \left(|\nabla u|^{p(x)-2}\nabla u +\mu_1(x)|\nabla u|^{q(x)-2}\nabla u + \mu_2(x)|\nabla u|^{r(x)-2}\nabla u \right)\cdot\nabla v \,\diff x
	\end{split}
\end{align*}
for all $u,v\in \Wpzero{\mathcal{T}}$. Thus, the energy functional $I_{\mathcal{T}}\colon \Wpzero{\mathcal{T}}\to \mathbb{R} $ related to $A_{\mathcal{T}}$ is given by
\begin{align} \label{Itau}
	I_{\mathcal{T}}(u) := \int_\Omega \l(\frac{1}{p(x)} |\nabla u|^{p(x)} + \frac{\mu_1}{q(x)} |\nabla u|^{q(x)} + \frac{\mu_2}{r(x)} |\nabla u|^{r(x)} \r) \,\diff x
\end{align}
for all $u \in \Wpzero{\mathcal{T}}$.

Using Proposition \ref{embedding}, we are able to exhibit the following properties of $I_{\mathcal{T}}$.

\begin{proposition} \label{propertiesI}
		Let hypotheses \eqref{H1} be satisfied. Then, the functional $I_{\mathcal{T}}$ is well-defined and of class $C^1$ with $I_{\mathcal{T}}^{\prime}(u) = A_{\mathcal{T}}(u)$.
\end{proposition}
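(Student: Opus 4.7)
The plan is the standard three-step scheme: verify that $I_\mathcal{T}$ is finite on $\Wpzero{\mathcal{T}}$, compute its G\^ateaux derivative and identify it with $A_\mathcal{T}$, and show that $A_\mathcal{T}$ is continuous from $\Wpzero{\mathcal{T}}$ into its dual. These together yield $I_\mathcal{T} \in C^1(\Wpzero{\mathcal{T}})$ with $I_\mathcal{T}' = A_\mathcal{T}$. Finiteness is immediate: for $u \in \Wpzero{\mathcal{T}}$ the integrand in \eqref{Itau} is pointwise dominated by $\mathcal{T}(x,|\nabla u|)/p^-$, so $|I_\mathcal{T}(u)| \leq \rho_\mathcal{T}(\nabla u)/p^- < \infty$ by Proposition~\ref{Prop2}.

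For the G\^ateaux derivative, I would fix $u,v \in \Wpzero{\mathcal{T}}$ and $t \in (0,1)$ and apply the pointwise mean value theorem to $s \mapsto F(x, \nabla u + s\nabla v)$, where $F$ denotes the integrand of \eqref{Itau}. The resulting difference quotient is dominated by a sum of three terms of the form $(|\nabla u| + |\nabla v|)^{m(x)-1}|\nabla v|$ with $m \in \{p,q,r\}$ and the matching $\mu_i$-weights, which Young's inequality bounds by a constant multiple of $\mathcal{T}(x,|\nabla u|) + \mathcal{T}(x,|\nabla v|) \in L^1(\Omega)$. Dominated convergence then lets me send $t \to 0^+$ and identify the limit as $\langle A_\mathcal{T}(u),v\rangle$; in particular, $v \mapsto \langle A_\mathcal{T}(u),v\rangle$ is linear and continuous on $\Wpzero{\mathcal{T}}$.

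For the continuity of $A_\mathcal{T}$, I would take $u_n \to u$ in $\Wpzero{\mathcal{T}}$ and split $\langle A_\mathcal{T}(u_n) - A_\mathcal{T}(u), v\rangle$ into three phase contributions. Variable-exponent H\"older's inequalities (the $q$- and $r$-terms in their $\mu_i$-weighted form) reduce the problem to showing that, for each $m \in \{p,q,r\}$, the map $\xi \mapsto |\xi|^{m(x)-2}\xi$ sends $\nabla u_n$ to a sequence converging to $|\nabla u|^{m(\cdot)-2}\nabla u$ in the appropriate (possibly $\mu_i$-weighted) variable-exponent Lebesgue space; the test-function norms $\|\nabla v\|_{p(\cdot)}$, $\|\nabla v\|_{q(\cdot),\mu_1}$ and $\|\nabla v\|_{r(\cdot),\mu_2}$ are controlled by $\|v\|_{1,\mathcal{T},0}$ thanks to Proposition~\ref{embedding}~(i) and~(vi). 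To obtain the Nemytskii convergence I would extract an a.e.\ convergent subsequence from $\nabla u_n \to \nabla u$ in $\Lp{\mathcal{T}}$, invoke pointwise continuity of $\xi \mapsto |\xi|^{m(x)-2}\xi$, and apply Vitali's theorem with a uniformly integrable majorant built from $\mathcal{T}(x,|\nabla u_n|) + \mathcal{T}(x,|\nabla u|)$, exactly as in the proof of Proposition~\ref{Prop2}~(viii); the subsequence principle then promotes convergence to the full sequence.

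The most delicate point is this last continuity step, where three Nemytskii-type operators with distinct variable exponents and the weights $\mu_1,\mu_2$ must be handled together. The weighted Vitali argument, together with the modular-to-norm correspondence of Proposition~\ref{Prop2} and the weighted embeddings of Proposition~\ref{embedding}~(vi), is what makes the estimate close; without those ingredients the bound on the dual norm would fail to be uniform in $v$ on the unit ball of $\Wpzero{\mathcal{T}}$.
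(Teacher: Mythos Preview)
Your proposal is correct and follows essentially the same scheme as the paper's proof: both establish finiteness via the trivial bound $I_\mathcal{T}(u)\le \rho_\mathcal{T}(\nabla u)/p^-$, compute the G\^ateaux derivative term-by-term via dominated convergence, and then prove continuity of $A_\mathcal{T}$ using the weighted H\"older inequalities (with the splitting $\mu_i=\mu_i^{1/m}\mu_i^{(m-1)/m}$), the embeddings of Proposition~\ref{embedding}~(i),(vi), a.e.\ convergence along a subsequence, Vitali's theorem, and the subsequence principle. The only cosmetic difference is that for the G\^ateaux step you build the $L^1$ majorant directly via Young's inequality, whereas the paper invokes H\"older after the weight splitting; both routes are standard and lead to the same conclusion.
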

\begin{proof} At first, we note that
	\begin{align*}
		0 \leq \frac{\rho_{\mathcal{T}}(\nabla u)}{r^+} \leq I_{\mathcal{T}}(u) \leq  \frac{\rho_{\mathcal{T}}(\nabla u)}{p^-} < + \infty
	\end{align*}
and thus, the functional $I_{\mathcal{T}}$ is well-defined. Now, similar to the proof of
Proposition 3.1 in Crespo-Blanco-Gasinski-Harjulehto-Winkert \cite{Crespo-Blanco-Gasinski-Harjulehto-Winkert-2022}, using the Dominated Convergence Theorem, Proposition \ref{embedding} $(i)$, the H\"{o}lder inequality and Proposition \ref{Prop0} $(iii)$, $ (iv)$, we can show
that for $t \to 0$ we have
\begin{align*}
	\int_\Omega\frac{|\nabla u+ t \,\nabla h|^{p(x)} - |\nabla u|^{p(x)}}{t p(x)} \, dx \to \int_\Omega |\nabla u|^{p(x) - 2} \nabla u \cdot \nabla h \,\diff x.
\end{align*}
Further, the same arguments give us the following, for $t \to 0$
\begin{align*}
	\int_\Omega\frac{  \mu_1(x) | \nabla u + t \,\nabla h|^{q(x)} - \mu_1(x) | \nabla u|^{q(x)}}{t q(x)} \, dx \to \int_\Omega \mu_1(x) |\nabla u|^{q(x) - 2} \nabla u \cdot \nabla h \,\diff x
\end{align*}
and
\begin{align*}
	\int_\Omega\frac{  \mu_2(x) | \nabla u + t \,\nabla h|^{r(x)} - \mu_2(x) | \nabla u|^{r(x)}}{t r(x)} \, dx \to \int_\Omega \mu_2(x) |\nabla u|^{r(x) - 2} \nabla u \cdot \nabla h \,\diff x.
\end{align*}
We stress that in the case of the variable exponents $q(x)$ and $r(x)$, it is sufficient to
apply $(vi)$ of Proposition \ref{embedding} instead of $(i)$ and, in order to use H\"{o}lder inequality, to split
$\mu_1(x) = \mu_1(x)^{\frac{1}{q(x)}} \mu_1(x)^{\frac{q(x) -1}{q(x)}}$ and
$\mu_2(x) = \mu_2(x)^{\frac{1}{r(x)}} \mu_2(x)^{\frac{r(x) -1}{r(x)}}$.
According of all this, we conclude that the Gateaux derivative of $I_{\mathcal{T}}$ is given by $A_{\mathcal{T}}$. Then, in a similar way, using again Proposition \ref{embedding}, the H\"{o}lder inequality, Proposition \ref{Prop0} and in addition the Vitali Theorem, we also get that $I_{\mathcal{T}}$ is of class $C^1$.

\end{proof}

 Finally, the properties of the operator $A_{\mathcal{T}}\colon \Wpzero{\mathcal{T}}\to \Wpzero{\mathcal{T}}^*$ are summarized in the next proposition. We refer the reader to Liu-Dai \cite[Proposition 3.1]{Liu-Dai-2018} for
 the analogous result in the case of two constant exponents.

\begin{proposition}\label{PropertiesA}
	Let hypothesis \eqref{H1} be satisfied. Then, the operator $A_{\mathcal{T}}$
	\begin{enumerate}
		\item[\textnormal{(i)}] is bounded (that is, it maps bounded sets into bounded sets), continuous, strictly monotone;
	\item[\textnormal{(ii)}]  satisfies the
	$\left(S_+\right)$-property, that is,
	\begin{align*}
		u_n \weak u \quad \text{in }  \Wpzero{\mathcal{T}} \quad \text{and} \quad \limsup_{n \to + \infty} \, \left\langle A\left(u_n\right), u_n - u\right\rangle \leq 0
	\end{align*}
imply $u_n \to u \quad \text{in }  \Wpzero{\mathcal{T}}$;
	\item[\textnormal{(iii)}] is coercive and a homeomorphism.
	\end{enumerate}
\end{proposition}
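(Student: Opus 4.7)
The plan is to prove the three parts in sequence, exploiting the fact from Proposition \ref{propertiesI} that $A_{\mathcal{T}} = I_{\mathcal{T}}'$, so that convexity of $I_{\mathcal{T}}$, the modular/norm dictionary of Proposition \ref{Prop2} and the Radon--Riesz-type property of Proposition \ref{Radon-Riesz} are all available throughout.

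For \emph{Part} $(i)$, I would estimate $|\langle A_{\mathcal{T}}(u),v\rangle|$ termwise by the variable-exponent H\"{o}lder inequality in the three spaces $\Lp{p(\cdot)}$, $L^{q(\cdot)}_{\mu_1}(\Omega)$ and $L^{r(\cdot)}_{\mu_2}(\Omega)$. For the $\mu_i$-weighted terms I would split $\mu_i = \mu_i^{1/s(x)}\mu_i^{(s(x)-1)/s(x)}$ (with $s\in\{q,r\}$) and invoke the embeddings of Proposition \ref{embedding}(i) and (vi), converting the outcome back into the modular/norm via Proposition \ref{Prop2}(iii)--(iv) to reach a polynomial bound on $\|A_{\mathcal{T}}(u)\|_*$ in terms of $\|\nabla u\|_{\mathcal{T}}$, hence boundedness on bounded sets. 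Continuity is immediate from $I_{\mathcal{T}}\in C^1$. Strict monotonicity follows by observing that the integrand of $\langle A_{\mathcal{T}}(u)-A_{\mathcal{T}}(v),u-v\rangle$ is a pointwise sum of three non-negative terms of the form $w(x)(|a|^{s(x)-2}a-|b|^{s(x)-2}b)\cdot(a-b)$ with $w\in\{1,\mu_1,\mu_2\}$ and $s\in\{p,q,r\}$, each strictly positive whenever $\nabla u(x)\neq\nabla v(x)$ and $w(x)>0$.

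For \emph{Part} $(ii)$, assume $u_n\weak u$ in $\Wpzero{\mathcal{T}}$ with $\limsup_{n\to\infty}\langle A_{\mathcal{T}}(u_n),u_n-u\rangle\leq 0$. I would begin with the splitting
\begin{align*}
\langle A_{\mathcal{T}}(u_n),u_n-u\rangle = \langle A_{\mathcal{T}}(u_n)-A_{\mathcal{T}}(u),u_n-u\rangle + \langle A_{\mathcal{T}}(u),u_n-u\rangle;
\end{align*}
monotonicity makes the first summand non-negative, while the second tends to zero by the weak convergence, so both summands actually tend to zero. The first is an integral of three pointwise non-negative integrands, hence each piece tends to zero in $L^1(\Omega)$ separately. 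Focusing on the unweighted $p$-piece and applying the standard pointwise monotonicity inequality for $t\mapsto |t|^{s-2}t$ (distinguishing $s(x)\geq 2$ from $1<s(x)<2$) forces $\nabla u_n\to\nabla u$ a.\,e.\ along a subsequence. Since the full modular $\rho_{\mathcal{T}}(\nabla u_n)$ stays bounded by the weak convergence, a Vitali argument in the spirit of the proof of Proposition \ref{Prop2}(viii) yields $\rho_{\mathcal{T}}(\nabla u_n-\nabla u)\to 0$, and Proposition \ref{Prop2}(v) upgrades this to $\|u_n-u\|_{1,\mathcal{T},0}\to 0$.

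For \emph{Part} $(iii)$, coercivity is immediate: for $\|\nabla u\|_{\mathcal{T}}>1$ one has $\langle A_{\mathcal{T}}(u),u\rangle = \rho_{\mathcal{T}}(\nabla u)\geq \|\nabla u\|_{\mathcal{T}}^{p^-}$ by Proposition \ref{Prop2}(iv), and $p^->1$. Strict monotonicity gives injectivity, while boundedness, continuity and $(S_+)$ together yield pseudomonotonicity in the sense of Remark \ref{RemarkPseudomonotone}, so Theorem \ref{theorem_pseudomonotone} provides surjectivity; thus $A_{\mathcal{T}}$ is bijective. Continuity of the inverse comes from a standard scheme: if $A_{\mathcal{T}}(u_n)\to A_{\mathcal{T}}(u)$, coercivity bounds $\{u_n\}$, and along any weakly convergent subsequence $u_n\weak v$ the splitting above together with the strong convergence of $A_{\mathcal{T}}(u_n)$ in the dual gives $\langle A_{\mathcal{T}}(u_n),u_n-v\rangle\to 0$, so by $(S_+)$ $u_n\to v$, by continuity $A_{\mathcal{T}}(v)=A_{\mathcal{T}}(u)$, by injectivity $v=u$, and a standard subsequence argument promotes this to convergence of the full sequence. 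The main obstacle lies in \emph{Part} $(ii)$: pushing the three integral convergences to the full modular convergence $\rho_{\mathcal{T}}(\nabla u_n-\nabla u)\to 0$ is delicate because the pointwise monotonicity inequality takes different forms for $s(x)\geq 2$ and $1<s(x)<2$, the weights $\mu_1,\mu_2$ may degenerate, and the Vitali step has to handle all three phases simultaneously.
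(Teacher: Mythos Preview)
Your treatment of Parts~(i) and~(iii) is essentially the same as the paper's (the paper uses Young's inequality rather than H\"older for the boundedness estimate, and Minty--Browder rather than Theorem~\ref{theorem_pseudomonotone} for surjectivity, but these are cosmetic differences). The gap is in Part~(ii).

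You correctly extract $\nabla u_n\to\nabla u$ a.e.\ along a subsequence from the $p$-piece of the monotone integral. The problem is the next sentence: ``since $\rho_{\mathcal{T}}(\nabla u_n)$ stays bounded \ldots\ a Vitali argument \ldots\ yields $\rho_{\mathcal{T}}(\nabla u_n-\nabla u)\to 0$''. Vitali's theorem needs \emph{uniform integrability} of $\{\mathcal{T}(x,|\nabla u_n-\nabla u|)\}_n$, and mere $L^1$-boundedness of $\{\mathcal{T}(x,|\nabla u_n|)\}_n$ does not provide it. The analogy with Proposition~\ref{Prop2}(viii) breaks down precisely here: in that proof uniform integrability is obtained from the \emph{already known} fact $\rho_{\mathcal{T}}(u_n-u)\to 0$, which is exactly what you are trying to prove. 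As written, your argument would equally ``prove'' that any weakly convergent bounded sequence with a.e.-convergent gradients converges strongly, which is false.

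The paper closes this gap as follows. Using Young's inequality (equivalently, convexity of $I_{\mathcal{T}}$ and $A_{\mathcal{T}}=I_{\mathcal{T}}'$) one gets
\[
\langle A_{\mathcal{T}}(u_n),u_n-u\rangle \;\geq\; I_{\mathcal{T}}(u_n)-I_{\mathcal{T}}(u),
\]
hence $\limsup I_{\mathcal{T}}(u_n)\leq I_{\mathcal{T}}(u)$; Fatou (via the a.e.\ convergence you already have) gives the reverse inequality, so $I_{\mathcal{T}}(u_n)\to I_{\mathcal{T}}(u)$. Now the \emph{converse} Vitali theorem (convergence in measure plus convergence of the integrals of nonnegative functions implies uniform integrability) applied to the integrands of $I_{\mathcal{T}}(u_n)$ yields the uniform integrability that was missing, and then the direct Vitali theorem finishes. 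Alternatively, once $I_{\mathcal{T}}(u_n)\to I_{\mathcal{T}}(u)$ is known, one can upgrade it to $\rho_{\mathcal{T}}(\nabla u_n)\to\rho_{\mathcal{T}}(\nabla u)$ by the same mechanism and then invoke the Radon--Riesz property of Proposition~\ref{Radon-Riesz}(ii), which you listed as available but never actually used. Either route works; the essential missing step in your proposal is the passage from boundedness to \emph{convergence} of the energies/modulars.
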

\begin{proof}
	We start by assertion $(i)$. We remark that $A_{\mathcal{T}}$ is the Gateaux derivative of the functional $I_{\mathcal{T}}$ defined in \eqref{Itau}. Further, thanks to Proposition \ref{propertiesI} we know that $I_{\mathcal{T}}$ is of class $C^1$. Hence, we clearly deduce that $A_{\mathcal{T}}$ is continuous. Also, we have that
	\begin{align*}
	\left\langle A_{\mathcal{T}}(u) - A_{\mathcal{T}}(v), u - v\right\rangle = & \int_\Omega \left(|\nabla u|^{p(x) -2}\nabla u - |\nabla v|^{p(x) -2}\nabla v\right) \cdot (\nabla u - \nabla v) \, \diff x\\
	 & +  \int_\Omega \mu_1(x) \left(|\nabla u|^{q(x) -2}\nabla u - |\nabla v|^{q(x) -2}\nabla v\right) \cdot (\nabla u - \nabla v) \, \diff x\\
	 & +  \int_\Omega \mu_2(x) \left(|\nabla u|^{r(x) -2}\nabla u - |\nabla v|^{r(x) -2}\nabla v\right) \cdot (\nabla u - \nabla v) \, \diff x.
	\end{align*}
We stress that all the addends in the previous equality are major of zero whenever $u \neq v$. We in fact know that the inequality
\begin{align*}
	\left(\left\vert z_1\right\vert^{s -2} z_1 -\left\vert z_2\right\vert^{s -2} z_2\right) \cdot \left(z_1 - z_2\right) > 0
\end{align*}
is valid for all $s > 1$ and for all $z_1, z_2 \in \mathbb{R}^N$ with $z_1 \neq z_2$. On account of this, we have that $A$ is strictly monotone.
Therefore, in order to archive $(i)$, we have only to show that $A_{\mathcal{T}}$ is bounded. Given $u \in \Wpzero{\mathcal{T}}$, we put $$M = \min \l \{\frac{1}{\|\nabla u\|_{\mathcal{T}}^{p^- - 1}},  \frac{1}{\|\nabla u\|_{\mathcal{T}}^{r^+ - 1}}\r\}.$$ Then, for $u, v \in \Wpzero{\mathcal{T}}$ we know that
\begin{align*}
& M \ \l \langle A_{\mathcal{T}}(u), \frac{v}{\|\nabla v\|_{\mathcal{T}}} \r \rangle  \\
 \leq &M \, \int_\Omega \l(|\nabla u|^{p(x) - 1} \frac{|\nabla v|}{\|\nabla v\|_{\mathcal{T}}} + \mu_1 |\nabla u|^{q(x) - 1} \frac{|\nabla v|}{\|\nabla v\|_{\mathcal{T}}} + \mu_2 |\nabla u|^{r(x) - 1} \frac{|\nabla v|}{\|\nabla v\|_{\mathcal{T}}} \r) \, \diff x\\
 \leq& \int_\Omega \l |\frac{\nabla u}{\|\nabla u\|_{\mathcal{T}}}\r|^{p(x) -1} \frac{|\nabla v|}{\|\nabla v\|_{\mathcal{T}}} \, \diff x + \int_{\Omega} \mu_1(x)^{\frac{q(x) -1}{q(x)}} \l|\frac{\nabla u}{\|\nabla u\|_{\mathcal{T}}}\r|^{q(x) -1} \mu_1(x)^{\frac{1}{q(x)}} \frac{|\nabla v|}{\|\nabla v\|_{\mathcal{T}}} \, \diff x\\
& + \int_{\Omega} \mu_2(x)^{\frac{r(x) -1}{r(x)}} \l|\frac{\nabla u}{\|\nabla u\|_{\mathcal{T}}}\r|^{r(x) -1} \mu_2(x)^{\frac{1}{r(x)}} \frac{|\nabla v|}{\|\nabla v\|_{\mathcal{T}}} \, \diff x \\
 \leq& \frac{p^+ - 1}{p^-} \int_\Omega \l |\frac{\nabla u}{\|\nabla u\|_{\mathcal{T}}}\r|^{p(x)} \, \diff x + \frac{1}{p^-} \int_{\Omega} \l| \frac{\nabla v}{\|\nabla v\|_{\mathcal{T}}}\r|^{p(x)} \, \diff x \\
 & + \frac{q^+ - 1}{q^-} \int_\Omega \mu_1(x) \l |\frac{\nabla u}{\|\nabla u\|_{\mathcal{T}}}\r|^{q(x)} \, \diff x
+ \frac{1}{q^-} \int_{\Omega} \mu_1(x) \l| \frac{\nabla v}{\|\nabla v\|_{ \mathcal{T}}}\r|^{q(x)} \, \diff x \\
 & +
\frac{r^+ - 1}{r^-} \int_\Omega \mu_2(x) \l |\frac{\nabla u}{\|\nabla u\|_{\mathcal{T}}}\r|^{r(x)} \, \diff x  + \frac{1}{r^-} \int_{\Omega} \mu_2(x) \l| \frac{\nabla v}{\|\nabla v\|_{\mathcal{T}}}\r|^{r(x)} \, \diff x\\
& \ \text{(by using the Young inequality)} \\
 \leq& \frac{r^+ - 1}{p^-} \, \rho_{\mathcal{T}} \l(\frac{\nabla u}{\|\nabla u\|_{\mathcal{T}}}\r) + \frac{1}{p^-} \, \rho_{\mathcal{T}} \l(\frac{\nabla v}{\|\nabla v\|_{\mathcal{T}}}\r)\\
 =& \frac{r^+ - 1}{p^-} + \frac{1}{p^-} = \frac{r^+}{p^-}\\
& \ (\text{on account of Proposition \ref{Prop2} (i)}).
\end{align*}
Consequently, we have
\begin{align*}
	\left\Vert A_{\mathcal{T}}(u)\right\Vert_{*}  & = \sup_{v \in \Wpzero{\mathcal{T}} \setminus \{0\}} \frac{\left\langle A_{\mathcal{T}}(u), v \right\rangle}{\|\nabla v\|_{\mathcal{T}}}\\
	& \leq \frac{r^+}{p^-} \, \max \left\{\|\nabla u\|_{\mathcal{T}}^{p^- - 1}, \|\nabla u\|_{\mathcal{T}}^{r^+ - 1}\right\}
\end{align*}
and this assures that $A_{\mathcal{T}}$ is bounded.

Now, we prove that $(ii)$ holds. For this end, we consider a sequence $\left\{u_n\right\}_{n\in\N} \subseteq \Wpzero{\mathcal{T}}$ such that
\begin{align} \label{S+}
	u_n \weak u \quad \text{in }  \Wpzero{\mathcal{T}} \quad \text{and} \quad \limsup_{n \to + \infty} \, \left\langle A_{\mathcal{T}}\left(u_n\right), u_n - u\right\rangle \leq 0.
\end{align}
We point out that, on account of $u_n \weak u$ in $ \Wpzero{\mathcal{T}}$, we have
\begin{align} \label{Au}
	\lim_{n \to + \infty} \left\langle A_{\mathcal{T}}(u), u_n - u\right\rangle = 0.
\end{align}
Hence, using \eqref{S+}, we also deduce that
\begin{align*}
	\limsup_{n \to \infty} 	\left\langle A_{\mathcal{T}}\left(u_n\right) - A_{\mathcal{T}}(u), u_n - u\right\rangle \leq 0.
\end{align*}
Taking into account that $A_{\mathcal{T}}$ is strictly monotone by $(i)$, we can in addition affirm that
\begin{align*}
0 \leq	\liminf_{n \to + \infty} 	\left\langle A_{\mathcal{T}}\left(u_n\right) - A_{\mathcal{T}}(u), u_n - u\right\rangle \leq \limsup_{n \to + \infty} 	\left\langle A_{\mathcal{T}}\left(u_n\right) - A_{\mathcal{T}}(u), u_n - u\right\rangle \leq 0
\end{align*}
and consequently, we conclude
\begin{align}\label{Au-Av}
	\lim_{n \to + \infty}	\left\langle A_{\mathcal{T}}\left(u_n\right) - A_{\mathcal{T}}(u), u_n - u\right\rangle = 0 = 	\lim_{n \to + \infty} \left\langle A_{\mathcal{T}}(u), u_n - u\right\rangle.
\end{align}
Next, we remark that
\begin{align*}
&	\int_\Omega \l(|\nabla u_n|^{p(x) - 2} \nabla u_n + \mu_1 \left\vert\nabla u_n\right\vert^{q(x) - 2} \nabla u_n + \mu_2 \left\vert\nabla u_n\right\vert^{r(x) - 1} \nabla u_n \r) \cdot \left(\nabla u_n - \nabla u\right)\, \diff x \\
 = & \int_\Omega \left\vert\nabla u_n\right\vert^{p(x)}\, \diff x - \int_\Omega \left\vert\nabla u_n\right\vert^{p(x) - 1} \nabla u\, \diff x\\
& +  \int_\Omega \mu_1(x) \left\vert\nabla u_n\right\vert^{q(x)}\, \diff x - \int_\Omega \mu_1(x) \left\vert\nabla u_n\right\vert^{q(x) - 1} \nabla u\, \diff x\\
& +  \int_\Omega \mu_2(x) \left\vert\nabla u_n\right\vert^{r(x)}\, \diff x - \int_\Omega \mu_2(x) \left\vert\nabla u_n\right\vert^{r(x) - 1} \nabla u\, \diff x\\
 \geq& \int_\Omega \left\vert\nabla u_n\right\vert^{p(x)}\, \diff x - \int_\Omega \left\vert\nabla u_n\right\vert^{p(x) - 1} |\nabla u| \, \diff x\\
& +  \int_\Omega \mu_1(x) \left\vert\nabla u_n\right\vert^{q(x)}\, \diff x - \int_\Omega \mu_1(x) \left\vert\nabla u_n\right\vert^{q(x) - 1} |\nabla u| \, \diff x\\
& +  \int_\Omega \mu_2(x) \left\vert\nabla u_n\right\vert^{r(x)}\, \diff x - \int_\Omega \mu_2(x) \left\vert\nabla u_n\right\vert^{r(x) - 1} |\nabla u| \, \diff x\\
 \geq&  \int_\Omega \left\vert\nabla u_n\right\vert^{p(x)}\, \diff x - \int_\Omega \l(\frac{p(x) - 1}{p(x)}| \nabla u_n |^{p(x)} + \frac{1}{p(x)} |\nabla u|^{p(x)} \r) \, \diff x\\
& + \int_\Omega \mu_1(x) |\nabla u_n|^{q(x)}\, \diff x - \int_\Omega \mu_1(x) \l(\frac{q(x) - 1}{q(x)}\left\vert\nabla u_n\right\vert^{q(x)} + \frac{1}{q(x)} |\nabla u|^{q(x)} \r) \, \diff x\\
& + \int_\Omega \mu_2(x) |\nabla u_n|^{r(x)}\, \diff x - \int_\Omega \mu_2(x) \l(\frac{r(x) - 1}{r(x)}\left\vert\nabla u_n\right\vert^{r(x)} + \frac{1}{r(x)} |\nabla u|^{r(x)} \r) \, \diff x\\
& \ \text{(by using the Young inequality)} \\
 =&  \int_\Omega \frac{1}{p(x)} \left\vert\nabla u_n\right\vert^{p(x)} \, \diff x - \int_\Omega \frac{1}{p(x)} |\nabla u|^{p(x)} \, \diff x\\
& + \int_\Omega \frac{\mu_1(x)}{q(x)} \left\vert\nabla u_n\right\vert^{q(x)} \, \diff x - \int_\Omega \frac{\mu_1(x)}{q(x)} |\nabla u|^{q(x)} \, \diff x\\
& + \int_\Omega \frac{\mu_2(x)}{r(x)} \left\vert\nabla u_n\right\vert^{r(x)} \, \diff x - \int_\Omega \frac{\mu_2(x)}{r(x)} |\nabla u|^{r(x)} \, \diff x.
\end{align*}
On account of this, using \eqref{S+} we deduce that
\begin{align*}
&	\limsup_{n \to + \infty} \int_\Omega \l(\frac{1}{p(x)} \left\vert\nabla u_n\right\vert^{p(x)} + \frac{\mu_1(x)}{q(x)} \left\vert\nabla u_n\right\vert^{q(x)} + \frac{\mu_2(x)}{r(x)} \left\vert\nabla u_n\right\vert^{r(x)}\r) \, \diff x\\
& \leq \int_\Omega \l(\frac{1}{p(x)} |\nabla u|^{p(x)} + \frac{\mu_1(x)}{q(x)} |\nabla u|^{q(x)} + \frac{\mu_2(x)}{r(x)} |\nabla u|^{r(x)}\r) \, \diff x.
\end{align*}
Now, thanks to Fatou's Lemma we know that
\begin{align*}
	&	\liminf_{n \to + \infty} \int_\Omega \l(\frac{1}{p(x)} \left\vert\nabla u_n\right\vert^{p(x)} + \frac{\mu_1(x)}{q(x)} \left\vert\nabla u_n\right\vert^{q(x)} + \frac{\mu_2(x)}{r(x)} \left\vert\nabla u_n\right\vert^{r(x)}\r) \, \diff x\\
	& \geq \int_\Omega \l(\frac{1}{p(x)} |\nabla u|^{p(x)} + \frac{\mu_1(x)}{q(x)} |\nabla u|^{q(x)} + \frac{\mu_2(x)}{r(x)} |\nabla u|^{r(x)}\r) \, \diff x.
\end{align*}
Consequently, we conclude that
\begin{align*}
	&	\lim_{n \to + \infty} \int_\Omega \l(\frac{1}{p(x)} \left\vert\nabla u_n\right\vert^{p(x)} + \frac{\mu_1(x)}{q(x)} \left\vert\nabla u_n\right\vert^{q(x)} + \frac{\mu_2(x)}{r(x)} \left\vert\nabla u_n\right\vert^{r(x)}\r) \, \diff x\\
	& = \int_\Omega \l(\frac{1}{p(x)} |\nabla u|^{p(x)} + \frac{\mu_1(x)}{q(x)} |\nabla u|^{q(x)} + \frac{\mu_2(x)}{r(x)} |\nabla u|^{r(x)}\r) \, \diff x.
\end{align*}
We stress that similar to the proof of Theorem $3.3$ $(ii)$ in Crespo-Blanco-Gasinski-Harjulehto-Winkert \cite{Crespo-Blanco-Gasinski-Harjulehto-Winkert-2022}, we can show that \begin{align*}
	\nabla u_n \to \nabla u \quad \text{in }  L^{p(\cdot)}(\Omega)
\end{align*}
and further, we have that $\left\{\nabla u_n\right\}_{n\in\N}$ converges in measure to $\nabla u$. According of this, we know that the functions on the left-hand side of above equality converge in measure to those on the
right-hand side. Therefore, the converse of Vitali theorem (see Bauer \cite[Lemma $21.6$]{Bauer-2001}) guarantees the uniform integrability of the sequence of functions
\begin{align*}
	\l\{\frac{1}{p(x)} \left\vert\nabla u_n\right\vert^{p(x)} + \frac{\mu_1(x)}{q(x)} \left\vert\nabla u_n\right\vert^{q(x)} + \frac{\mu_2(x)}{r(x)} \left\vert\nabla u_n\right\vert^{r(x)} \r\}_{n \in \mathbb{N}}.
\end{align*}
Now, we stress that
\begin{align*}
	& \left\vert\nabla u_n - \nabla u\right\vert^{p(x)} + \mu_1(x) \left\vert\nabla u_n - \nabla u\right\vert^{q(x)} + \mu_2(x) \left\vert\nabla u_n - \nabla u\right\vert^{r(x)}\\
	 \leq& 2^{r^+ -1}\,  r^+ \l(\frac{1}{p(x)} \left\vert\nabla u_n\right\vert^{p(x)} + \frac{\mu_1(x)}{q(x)}  \left\vert\nabla u_n\right\vert^{q(x)} + \frac{\mu_2(x)}{r(x)}  \left\vert\nabla u_n\right\vert^{r(x)}\r)\\
	& + 2^{r^+ -1} \, r^+ \l(\frac{1}{p(x)} |\nabla u|^{p(x)} + \frac{\mu_1(x)}{q(x)} |\nabla u|^{q(x)} + \frac{\mu_2(x)}{r(x)} |\nabla u|^{r(x)}\r).
\end{align*}
From this, we deduce that the sequence
\begin{align} \label{sequenceun-u}
	\l \{ \left\vert\nabla u_n - \nabla u\right\vert^{p(x)} + \mu_1(x) \left\vert\nabla u_n - \nabla u\right\vert^{q(x)} +
 \mu_2(x) \left\vert\nabla u_n - \nabla u\right\vert^{r(x)}\r\}_{n \in \mathbb{N}}
\end{align}
is uniformly integrable. Then, thanks to the convergence in measure
of $\left\{\nabla u_n\right\}_{n\in\N}$ to $\nabla u$, we have that the sequence \eqref{sequenceun-u} converges in measure to $0$. Now, we can use the Vitali theorem (see Bogachev \cite[Theorem $4.5.4$]{Bogachev-2007}) which gives us
\begin{align*}
	& \lim_{n \to  \infty} \int_{\Omega} \l(\left\vert\nabla u_n - \nabla u\right\vert^{p(x)} + \mu_1(x) \left\vert\nabla u_n - \nabla u\right\vert^{q(x)} + \mu_2(x) \left\vert\nabla u_n - \nabla u\right\vert^{r(x)}\r) \, \diff x \\
	& = \lim_{n \to  \infty} \, \rho_{\mathcal{T}}\left(\nabla u_n - \nabla u\right) = 0.
\end{align*}
Hence, thanks to Proposition \ref{Prop2} $(v)$, we deduce that
\begin{align*}
	\left\Vert u_n - u\right\Vert_{1, \mathcal{T}, 0} \to 0,
\end{align*}
that is, we have that $u_n \to u \, \text{in} \, \Wpzero{\mathcal{T}}$.

Finally, we prove $(iii)$. At first, we point out that $A_{\mathcal{T}}$ is coercive (see Definition \ref{coercive} $(iii)$). We have in fact that
\begin{align*}
\frac{\left\langle A_{\mathcal{T}}(u), u \right\rangle}{\|u\|_{1, \mathcal{T},0}}   =& \int_\Omega \| u \|_{1, \mathcal{T}, 0}^{p(x) - 1} \l(\frac{|\nabla u|}{\|u\|_{1, \mathcal{T}, 0}}\r)^{p(x)} \, \diff x \\
& + \int_\Omega \mu_1(x) \| u \|_{1, \mathcal{T}, 0}^{q(x) - 1} \l(\frac{|\nabla u|}{\|u\|_{1, \mathcal{T}, 0}}\r)^{q(x)} \, \diff x \\
& + \int_\Omega  \mu_2(x) \| u \|_{1, \mathcal{T}, 0}^{r(x) - 1} \l(\frac{|\nabla u|}{\|u\|_{1, \mathcal{T}, 0}}\r)^{r(x)} \, \diff x\\
 \geq& \min \,\{ \| u \|_{1, \mathcal{T}, 0}^{p^- - 1}, \, \| u \|_{1, \mathcal{T}, 0}^{r^+ - 1} \} \ \rho_{\mathcal{T}} \l( \frac{\nabla u}{\|\nabla u\|_{_{\mathcal{T}}}}\r)\\
 =& \min \,\{ \| u \|_{1, \mathcal{T}, 0}^{p^- - 1}, \, \| u \|_{1, \mathcal{T}, 0}^{r^+ - 1} \}\\
& \l(\text{we recall that} \ \text{Proposition \ref{Prop2} (i) gives}\  \rho_{\mathcal{T}} \l( \frac{\nabla u}{\|\nabla u\|_{_{\mathcal{T}}}}\r) = 1\r)\\
& \to + \infty \quad \text{as} \quad \| u \|_{1, \mathcal{T}, 0} \to \infty.
\end{align*}

Next, as $A_{\mathcal{T}}$ is bounded, continuous and strictly monotone by $(i)$, we can apply the Minty-Browder theorem which guarantees that $A_{\mathcal{T}}$ is invertible and its inverse $A_{\mathcal{T}}^{-1}$ is strictly monotone and bounded. Based on this, in order to conclude that $A_{\mathcal{T}}$ is a homeomorphism, we have only to prove that $A_{\mathcal{T}}^{-1}$ is continuous.
To this purpose, we consider a sequence
\begin{align} \label{wn}
\left\{w_n\right\}_{n\in\N} \subseteq \Wpzero{\mathcal{T}}^* \quad \text{such that} \quad w_n \to w \ \text{in} \ \Wpzero{\mathcal{T}}^*.
\end{align}
In addition, we take the sequence $\left\{u_n\right\}_{n\in\N} \subseteq \Wpzero{\mathcal{T}}$ defined by
\begin{align*}
	u_n = A_{\mathcal{T}}^{-1}\left(w_n\right) \quad \text{for all} \ n \in \mathbb{N}
\end{align*}
and further we put $ u = A_{\mathcal{T}}^{-1}(w)$. We note that from \eqref{wn}, using the boundedness of $A_{\mathcal{T}}^{-1}$, we get that
$\left\{u_n\right\}_{n\in\N}$ is bounded in $\Wpzero{\mathcal{T}}$. Hence, we know that there exists a subsequence of $\left\{u_n\right\}_{n\in\N}$ (that we still claim $\left\{u_n\right\}_{n\in\N}$) such that
\begin{align*}
	u_n \weak \bar{u} \quad \text{in} \, \Wpzero{\mathcal{T}}.
\end{align*}
Taking into account this, we have that
\begin{align*}
		& \lim_{n \to + \infty}	\left\langle A_{\mathcal{T}}\left(u_n\right) - A_{\mathcal{T}}(\bar{u}), u_n - \bar{u}\right\rangle\\
		& = \lim_{n \to + \infty} \left\langle w_n - w, u_n - \bar{u}\right\rangle +
		\lim_{n \to + \infty} \left\langle w - A_{\mathcal{T}}(\bar{u}), u_n - \bar{u}\right\rangle  \\
		& = 0.
\end{align*}
Now, the $\left(S_+\right)$-property of operator $A_{\mathcal{T}}$ (see assertion $(ii)$) permits to conclude that
\begin{align*}
	u_n \to \bar{u} \quad \text{in} \, \Wpzero{\mathcal{T}}.
\end{align*}
Then, using the continuity of $A$, we get
\begin{align*}
	A_{\mathcal{T}}(\bar{u}) = \lim_{n \to \infty} A_{\mathcal{T}}\left(u_n\right) = \lim_{n \to \infty} w_n = w = A_{\mathcal{T}}(u).
\end{align*}
Hence, as $A_{\mathcal{T}}$ is injective, we deduce that $\bar{u} = u$ and further we conclude that $(iii)$ holds.
\end{proof}

We emphasize that arguing like in the proofs of Propositions \ref{propertiesI} and \ref{PropertiesA}, taking into account that $\Wp{\mathcal{T}} \hookrightarrow \Lp{\mathcal{T}}$ is a compact embedding (see Proposition \ref{Poincare} $(i)$), we get the following result.

\begin{proposition}
Let hypotheses \eqref{H1} be satisfied.	Let $\bar{A}_{\mathcal{T}}\colon \Wp{\mathcal{T}}\to \Wp{\mathcal{T}}^*$ be the nonlinear operator defined by
	\begin{align*}
		\begin{split}
			\left\langle \bar{A}_{\mathcal{T}}(u),v\right\rangle = \into \left(|\nabla u|^{p(x)-2}\nabla u +\mu_1(x)|\nabla u|^{q(x)-2}\nabla u + \mu_2(x)|\nabla u|^{r(x)-2}\nabla u \right)\cdot\nabla v \,\diff x,
		\end{split}
	\end{align*}
	for all $u,v\in \Wp{\mathcal{T}}$ and let $\bar{I}_{\mathcal{T}}\colon \Wp{\mathcal{T}}\to \mathbb{R} $ be the corresponding energy functional. Then, the following hold:
		\begin{enumerate}
				\item[\textnormal{(i)}] $\bar{I}_{\mathcal{T}}$ is well-defined and of class $C^1$ with $\bar{I}_{\mathcal{T}}^{\prime}(u) = \bar{A}_{\mathcal{T}}(u)$;
		\item[\textnormal{(ii)}] $\bar{A}_{\mathcal{T}}$ is bounded, continuous and strictly monotone;
		\item[\textnormal{(iii)}] $\bar{A}_{\mathcal{T}}$ satisfies the
		$\left(S_+\right)$-property.
	\end{enumerate}
\end{proposition}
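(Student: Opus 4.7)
The plan is to follow the proofs of Propositions \ref{propertiesI} and \ref{PropertiesA} essentially line by line, making only those modifications forced by working on $\Wp{\mathcal{T}}$ rather than $\Wpzero{\mathcal{T}}$. The key structural point is that Poincar\'e's inequality is no longer available, so the natural norm is $\|u\|_{1,\mathcal{T}} = \|\nabla u\|_{\mathcal{T}} + \|u\|_{\mathcal{T}}$ and both summands must be controlled independently. The compact embedding $\Wp{\mathcal{T}} \hookrightarrow \Lp{\mathcal{T}}$ of Proposition \ref{Poincare}\,(i) is the one genuinely new ingredient; it will absorb the $\|u_n - u\|_{\mathcal{T}}$ piece in the $\Ss$-property argument.

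For (i), the bounds $0 \leq \bar{I}_{\mathcal{T}}(u) \leq \rho_{\mathcal{T}}(\nabla u)/p^-$ immediately give well-definedness. The G\^ateaux differentiability calculation is literally the one in Proposition \ref{propertiesI}: one applies the dominated convergence theorem to each of the three summands, using the splittings $\mu_1(x) = \mu_1(x)^{1/q(x)}\mu_1(x)^{(q(x)-1)/q(x)}$ and $\mu_2(x) = \mu_2(x)^{1/r(x)}\mu_2(x)^{(r(x)-1)/r(x)}$ together with H\"older's inequality and Proposition \ref{Prop0}\,(iii)--(iv), to identify $\bar{A}_{\mathcal{T}}$ as the G\^ateaux derivative. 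Continuity of $\bar{A}_{\mathcal{T}}$ in the operator norm, and hence $C^1$-regularity, then follows from Vitali's theorem exactly as before.

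For (ii), strict monotonicity is immediate from the pointwise inequality $(|z_1|^{s-2}z_1 - |z_2|^{s-2}z_2)\cdot(z_1 - z_2) > 0$ applied with $s \in \{p(x),q(x),r(x)\}$ and integrated against the nonnegative weights $1$, $\mu_1(x)$, $\mu_2(x)$. Boundedness is actually easier than in the Dirichlet setting: the Young-inequality estimate of Proposition \ref{PropertiesA}\,(i), now bounding $\langle \bar{A}_{\mathcal{T}}(u),v\rangle$ against $\|\nabla v\|_{\mathcal{T}} \leq \|v\|_{1,\mathcal{T}}$, yields
\[
\|\bar{A}_{\mathcal{T}}(u)\|_{*} \leq \tfrac{r^+}{p^-}\,\max\bigl\{\|\nabla u\|_{\mathcal{T}}^{p^--1},\,\|\nabla u\|_{\mathcal{T}}^{r^+-1}\bigr\},
\]
which is controlled by $\|u\|_{1,\mathcal{T}}$. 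Continuity of $\bar{A}_{\mathcal{T}}$ is already recorded in (i).

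For (iii), the $\Ss$-property is the main obstacle and where the new ingredient enters. Given a sequence $\{u_n\}_{n\in\N}\subseteq \Wp{\mathcal{T}}$ with $u_n \weak u$ in $\Wp{\mathcal{T}}$ and $\limsup_n \langle \bar{A}_{\mathcal{T}}(u_n), u_n - u\rangle \leq 0$, Proposition \ref{Poincare}\,(i) produces $u_n \to u$ in $\Lp{\mathcal{T}}$, which already settles the lower-order piece $\|u_n - u\|_{\mathcal{T}} \to 0$. For the gradient piece one repeats verbatim the monotonicity-plus-Young-inequality computation of Proposition \ref{PropertiesA}\,(ii), obtaining convergence of the modular integrals $\into(\tfrac{1}{p(x)}|\nabla u_n|^{p(x)} + \tfrac{\mu_1(x)}{q(x)}|\nabla u_n|^{q(x)} + \tfrac{\mu_2(x)}{r(x)}|\nabla u_n|^{r(x)})\,\diff x$ to the corresponding integral evaluated at $u$. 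The converse of Vitali's theorem then gives uniform integrability of these integrands; convergence in measure of $\nabla u_n$ to $\nabla u$ (deduced exactly as in Proposition \ref{PropertiesA}\,(ii) via the strict monotonicity argument combined with the $L^{p(\cdot)}$ strong convergence) together with Vitali's theorem applied to $|\nabla u_n - \nabla u|^{p(x)} + \mu_1(x)|\nabla u_n - \nabla u|^{q(x)} + \mu_2(x)|\nabla u_n - \nabla u|^{r(x)}$ forces $\rho_{\mathcal{T}}(\nabla u_n - \nabla u) \to 0$. Proposition \ref{Prop2}\,(v) then delivers $\|\nabla u_n - \nabla u\|_{\mathcal{T}} \to 0$, and combining with the $L^{\mathcal{T}}$-convergence of $u_n$ to $u$ yields $u_n \to u$ in $\Wp{\mathcal{T}}$, closing the argument.
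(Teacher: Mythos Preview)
Your proposal is correct and matches the paper's own approach exactly: the paper does not give a separate proof but simply remarks that one argues as in Propositions~\ref{propertiesI} and~\ref{PropertiesA}, taking into account the compact embedding $\Wp{\mathcal{T}}\hookrightarrow\Lp{\mathcal{T}}$ from Proposition~\ref{Poincare}\,(i). Your identification of this embedding as the one new ingredient needed to handle the $\|u_n-u\|_{\mathcal{T}}$ piece in the $\Ss$-argument (since the norm on $\Wp{\mathcal{T}}$ has both a gradient and a zero-order part) is precisely the modification the paper has in mind.
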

\section{Existence and uniqueness results for a multi phase Dirichlet problem} \label{sec5}

This section is devoted to the study of the Dirichlet problem  \eqref{problem}. Our aim is in establishing the existence of at least one nontrivial solution for such a problem. However, we also provide conditions for the uniqueness of solution.

We point out that here the exponents $p(\cdot), q(\cdot), r(\cdot)$ and the weight functions $\mu_1(\cdot), \mu_2(\cdot)$ satisfy assumptions \eqref{H1}. Also, in the analysis of  problem \eqref{problem} we need some known results on the spectrum of the negative $m$-Laplacian with $1< m < + \infty$ and Dirichlet boundary condition given as
\begin{equation}\label{eigenvalue_problem}
	\begin{aligned}
		-\Delta_m u& =\lambda|u|^{m-2}u\quad && \text{in } \Omega,\\
		u
		& = 0  &&\text{on } \partial \Omega.
	\end{aligned}
\end{equation}
Thus for convenience of the reader, we recall that a number $\lambda \in \R$ is an eigenvalue of \eqref{eigenvalue_problem} if problem \eqref{eigenvalue_problem} has a nontrivial solution $u \in W^{1, m}_{0}(\Omega)$. In addition, from L\^{e} \cite{Le-2006} we know that there exists a smallest eigenvalue $\lambda_{1, m}$ of \eqref{eigenvalue_problem} which is positive, isolated and simple. Further, such eigenvalue can be variationally characterized through
\begin{align}\label{characterization-lambda1m}
	\lambda_{1,m} =\inf \left \{ \frac{\|\nabla u\|_{m}^m}{\|u\|_{m}^m}\,:\, u \in W^{1, m}_0(\Omega), u \neq 0 \right \}.
\end{align}
We make use of the first eigenvalue in
the hypotheses on the reaction term $f$. Precisely, we suppose that:

\begin{enumerate}[label=\textnormal{(H$2$)},ref=\textnormal{H$2$}]
	\item\label{H2}
	$f\colon \Omega \times \R \times \R^N \to\R$ is a Carath\'eodory function with $f(\cdot, 0, 0) \neq 0$ satisfying the following conditions:
		\begin{enumerate}[itemsep=0.2cm,label=\textnormal{(\roman*)},ref=\textnormal{\roman*}]
		\item\label{H2i}
		there exist $\gamma_1 \in L^{\frac{m(x)}{m(x)-1}}(\Omega)$ and $k_1, k_2 \geq 0$ such that
		\begin{align*}
			|f(x, t, z)| \leq k_1 |z|^{p(x) \frac{m(x)}{m(x) -1}} + k_2 |t|^{m(x) -1} + \gamma_1(x)
		\end{align*}
	 for a.\,a. $x \in \Omega$, for all $t \in \R$ and for all $ z \in \R^N$, where $m \in C(\close)$ is such that $1 < m(x) < p^*(x) : = \frac{N p(x)}{N - p(x)}$;
		\item\label{H2ii}
		there exist $\gamma_2 \in L^1(\Omega)$ and $k_3, k_4 \geq 0$ such that
		\begin{align*}
			f(x, t, z) \, t \leq k_3 |z|^{p(x)} + k_4 |t|^{p^-} + \gamma_2(x)
		\end{align*}
	for a.\,a. $x \in \Omega$, for all $t \in \R$ and for all $ z \in \R^N$. Also, we suppose
		\begin{align*}
			1 - k_3 - k_4 \, \lambda_{1, p^-}^{-1} > 0,
		\end{align*}
	being $\lambda_{1, p^-}$ the first eigenvalue of negative $p^-$-Laplacian with Dirichlet boundary condition (see \eqref{characterization-lambda1m}).
	\end{enumerate}
\end{enumerate}

\begin{definition}\label{weaksolution}
	We claim $u \in \Wpzero{\mathcal{T}}$ a weak solution of problem \eqref{problem} if for all $h \in \Wpzero{\mathcal{T}}$ we have
\begin{equation} \label{weakeq}
\int_\Omega \l (|\nabla u|^{p(x)-2} \nabla u + \mu_1(x)|\nabla u|^{q(x)-2}\nabla u + \mu_2(x)|\nabla u|^{r(x)-2}\nabla u \r) \cdot \nabla h \, \diff x
\end{equation}
\begin{equation*}
 =  \int_\Omega f(x,u, \nabla u) \ h \, \diff x.
\end{equation*}

\end{definition}

\begin{remark}
We stress that thanks to the embedding results in Proposition \ref{embedding} and to hypothesis \eqref{H2}\eqref{H2i}, we have that the weak solutions of problem \eqref{problem} are well-defined.
\end{remark}
Now, we are ready to give our existence result.
\begin{theorem} \label{existence}
	Let hypotheses \eqref{H1} and \eqref{H2} be satisfied. Then, problem \eqref{problem} admits at least one
	nontrivial weak solution $u \in \Wpzero{\mathcal{T}}$.
\end{theorem}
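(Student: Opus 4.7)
The plan is to rewrite problem \eqref{problem} as an operator equation $\mathcal{B}(u)=0$ in $\Wpzero{\mathcal{T}}^{\ast}$, where
$$\langle \mathcal{B}(u),v\rangle := \langle A_{\mathcal{T}}(u),v\rangle - \int_{\Omega} f(x,u,\nabla u)\,v\,\diff x,$$
so that by Definition \ref{weaksolution} the weak solutions of \eqref{problem} are precisely the zeros of $\mathcal{B}$, and then to apply Theorem \ref{theorem_pseudomonotone}. The growth condition \eqref{H2}\eqref{H2i}, combined with the compact embedding $\Wpzero{\mathcal{T}}\hookrightarrow \Lp{m(\cdot)}$ from Proposition \ref{embedding}\,(iii), shows that the Nemytskii map $u\mapsto f(\cdot,u,\nabla u)$ sends bounded sets of $\Wpzero{\mathcal{T}}$ into bounded subsets of $\Lp{m'(\cdot)}$; coupled with the H\"older-type inequality and the boundedness of $A_{\mathcal{T}}$ (Proposition \ref{PropertiesA}\,(i)), this gives that $\mathcal{B}$ is bounded.

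For coercivity I would test with $u$ itself and apply \eqref{H2}\eqref{H2ii} to obtain
$$\langle \mathcal{B}(u),u\rangle \geq \rho_{\mathcal{T}}(\nabla u) - k_3\int_{\Omega}|\nabla u|^{p(x)}\,\diff x - k_4\|u\|_{p^-}^{p^-} - \|\gamma_2\|_{1}.$$
Splitting $\Omega$ according to whether $|\nabla u|\geq 1$ yields $\|\nabla u\|_{p^-}^{p^-}\leq \rho_{\mathcal{T}}(\nabla u)+|\Omega|$, and the variational characterization \eqref{characterization-lambda1m} of $\lambda_{1,p^-}$ produces $\|u\|_{p^-}^{p^-}\leq \lambda_{1,p^-}^{-1}\|\nabla u\|_{p^-}^{p^-}$. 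Hence
$$\langle \mathcal{B}(u),u\rangle \geq \left(1-k_3-k_4\lambda_{1,p^-}^{-1}\right)\rho_{\mathcal{T}}(\nabla u) - C,$$
and the positivity of the bracketed constant together with Proposition \ref{Prop2}\,(iv) (relating the modular to the norm when $\|\nabla u\|_{\mathcal{T}}>1$) give $\langle \mathcal{B}(u),u\rangle/\|u\|_{1,\mathcal{T},0}\to +\infty$ as $\|u\|_{1,\mathcal{T},0}\to +\infty$.

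The main obstacle is the pseudomonotonicity of $\mathcal{B}$, which I would obtain via Remark \ref{RemarkPseudomonotone}. Suppose $u_n\weak u$ in $\Wpzero{\mathcal{T}}$ with $\limsup_{n\to+\infty}\langle \mathcal{B}(u_n),u_n-u\rangle\leq 0$. By Proposition \ref{embedding}\,(iii) we have $u_n\to u$ in $\Lp{m(\cdot)}$, while $\{f(\cdot,u_n,\nabla u_n)\}$ remains bounded in $\Lp{m'(\cdot)}$ from the boundedness step, so the H\"older-type inequality forces $\int_{\Omega} f(x,u_n,\nabla u_n)(u_n-u)\,\diff x\to 0$. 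This implies $\limsup \langle A_{\mathcal{T}}(u_n),u_n-u\rangle\leq 0$, whence by the $(S_+)$-property of $A_{\mathcal{T}}$ (Proposition \ref{PropertiesA}\,(ii)) we deduce $u_n\to u$ strongly in $\Wpzero{\mathcal{T}}$. Continuity of $A_{\mathcal{T}}$ and of the Nemytskii operator then yield $\mathcal{B}(u_n)\to \mathcal{B}(u)$ in $\Wpzero{\mathcal{T}}^{\ast}$ and $\langle \mathcal{B}(u_n),u_n\rangle\to\langle \mathcal{B}(u),u\rangle$, which is pseudomonotonicity by Remark \ref{RemarkPseudomonotone}.

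Theorem \ref{theorem_pseudomonotone} then produces $u\in\Wpzero{\mathcal{T}}$ with $\mathcal{B}(u)=0$, i.e., a weak solution of \eqref{problem}. Nontriviality is automatic from the hypothesis $f(\cdot,0,0)\not\equiv 0$: indeed, if $u\equiv 0$ satisfied \eqref{weakeq}, we would have $\int_{\Omega} f(x,0,0)h\,\diff x = 0$ for every $h\in\Wpzero{\mathcal{T}}$, which contradicts $f(\cdot,0,0)\not\equiv 0$.
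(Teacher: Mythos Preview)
Your proposal is correct and follows essentially the same route as the paper: define $\mathcal{B}=A_{\mathcal{T}}-N_f$, verify boundedness and continuity via \eqref{H2}\eqref{H2i} and the embeddings, establish pseudomonotonicity by using the compact embedding into $L^{m(\cdot)}(\Omega)$ together with the $(S_+)$-property of $A_{\mathcal{T}}$, prove coercivity from \eqref{H2}\eqref{H2ii} and the variational characterization of $\lambda_{1,p^-}$, and conclude by Theorem~\ref{theorem_pseudomonotone}. Your explicit splitting of $\Omega$ into $\{|\nabla u|\geq 1\}$ and its complement to bound $\|\nabla u\|_{p^-}^{p^-}$ by $\rho_{\mathcal{T}}(\nabla u)+|\Omega|$ is exactly the step the paper performs implicitly when the constant $-k_4\lambda_{1,p^-}^{-1}|\Omega|$ appears in its coercivity estimate.
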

\begin{proof}
	
	Let $N^*_{f}\colon \Wpzero{\mathcal{T}} \subset \Lp{m(\cdot)} \to \Lp{m^{\prime}(\cdot)}$ be the Nemytskij operator corresponding to the Carath\'eodory function $f$, that is,
	\begin{align*}
		N^*_{f}(u)(\cdot)= f\l(\cdot, u(\cdot), \nabla u(\cdot)\r) \quad \text{for all} \ u \in \Wpzero{\mathcal{T}}.
	\end{align*}
Thanks to hypothesis \eqref{H2}\eqref{H2i} we have that $N^\ast_{f}(\cdot)$ is well-defined, bounded and continuous, see Motreanu-Motreanu-Papageorgiou \cite{Motreanu-Motreanu-Papageorgiou-2014}. Hence, since the embedding $i^*\colon \Lp{m^{\prime}(\cdot)} \to \Wpzero{\mathcal{T}}^* $ is continuous (see Gasinski-Papageorgiou \cite[Lemma $2.2.27$, p. $141$]{Gasinski-Papageorgiou-2006}), we deduce that also the operator $N_{f}\colon \Wpzero{\mathcal{T}} \to \Wpzero{\mathcal{T}}^*$ defined by $N_f=i^* \circ N^*_f$ is bounded and continuous.

Now, we consider the operator $\mathcal{A}_{\mathcal{T}}\colon \Wpzero{\mathcal{T}} \to \Wpzero{\mathcal{T}}^*$ given by
	\begin{align*}
		\mathcal{A}_{\mathcal{T}}(u) = A_{\mathcal{T}}(u) - N_{f}(u) \quad \text{for all } u \in \Wpzero{\mathcal{T}}.
	\end{align*}
	Since $N_{f}$ is bounded and continuous, on account of Theorem \ref{PropertiesA}, we have that $\mathcal{A}_{\mathcal{T}}$ is bounded and continuous. We stress that if $\mathcal{A}_{\mathcal{T}}$ is in addition pseudomonotone and coercive then, thanks to Theorem \ref{theorem_pseudomonotone}, we have that $\mathcal{A}_{\mathcal{T}}$ is surjective. This in particular guarantees the existence of a function $u \in \Wpzero{\mathcal{T}}$ such that $\mathcal{A}_{\mathcal{T}}(u) = 0$. Now, according to the definition of the operator $\mathcal{A}_{\mathcal{T}}$ and as $f(\cdot, 0, 0) \neq 0$ (see hypothesis \eqref{H2}), we have that $u$ is a nontrivial weak solution of problem \eqref{problem}, hence the claim is proved.
	
In order to conclude the proof we at first show that $\mathcal{A}_{\mathcal{T}}$ is pseudomonotone (in the sense of Remark \ref{RemarkPseudomonotone}). We consider
	a sequence $\{u_n\}_{n\in\N} \subseteq \Wpzero{\mathcal{T}}$ such that
	\begin{align} \label{un}
u_n \weak u \ \text{in} \ \Wpzero{\mathcal{T}}  \quad \text{and} \quad \limsup_{n\to +\infty}
\left\langle \mathcal{A}_{\mathcal{T}}\left(u_n\right),u_n-u\right\rangle \leq 0.
	\end{align}
Now, according to the weak convergence of $\left\{u_n\right\}_{n\in\N}$ in $\Wpzero{\mathcal{T}}$, we have that $\left\{u_n\right\}_{n\in\N}$ is bounded in its norm. This implies that $\left\{N_f^*\left(u_n\right)\right\}_{n\in\N}$ is bounded too. Then, using this fact along with H\"older's inequality and the compact embedding $\Wpzero{\mathcal{T}} \hookrightarrow \Lp{m(\cdot)}$ (see Proposition \ref{embedding} $(iii)$), we obtain
\begin{align}\label{convergence_Nf}
	\begin{split}
		\l|\into f\l(x,u_n,\nabla u_n\r)(u_n-u)\,\diff x \r| & \leq
	2 \l\| N^*_f\left(u_n\right)\r\|_{\frac{m(\cdot) - 1}{m(\cdot)}} \l\|u-u_n\r\|_{m(\cdot)} \\
		& \leq 2 \sup_{n \in \N} \l\|N^*_f\left(u_n\right)\r\|_{\frac{m(\cdot) -
				1}{m(\cdot)}} \l\|u-u_n\r\|_{m(\cdot)}\\
			& \to 0\quad\text{as }n\to
		\infty.
	\end{split}
\end{align}
Passings to the limit in \eqref{weakeq} (where we replace $u$ by $u_n$ and $h$ by $u_n-u$) and using \eqref{convergence_Nf}, we further get
	\begin{align} \label{limsupA}
	\limsup_{n \to +\infty}\,\l\langle A_{\mathcal{T}}\left(u_n\right), u_n-u\r\rangle =
	\limsup_{n \to +\infty}\,\l\langle \mathcal{A}_{\mathcal{T}}\left(u_n\right), u_n-u\r\rangle \leq 0.
\end{align}
We recall that Theorem \ref{PropertiesA} assures that $A_{\mathcal{T}}$ satisfies the $\Ss$-property. On account of this, by \eqref{un} and \eqref{limsupA}, we deduce that $u_n\to u$ in $\Wpzero{\mathcal{T}}$. Now, using the continuity of $\mathcal{A}_{\mathcal{T}}$, we have that $\mathcal{A}_{\mathcal{T}}\left(u_n\right)\to \mathcal{A}_{\mathcal{T}}(u)$ in $\Wpzero{\mathcal{T}}^*$. This guarantees, with a view to Remark \ref{RemarkPseudomonotone}, that $\mathcal{A}_{\mathcal{T}}$ is pseudomonotone.

Next, we prove that $\mathcal{A}_{\mathcal{T}}$ is coercive. We recall that from the variational characterization of the first eigenvalue of negative Dirichlet $p^-$-Laplacian (see \eqref{characterization-lambda1m}), we know that
\begin{align}\label{estimate}
	\into |u|^{p^-}\,\diff x \leq \lambda_{1, p^-}^{-1} \into |\nabla u|^{p^-}\,\diff x\quad\text{for all }u \in \Wpzero{\mathcal{T}} \subset W^{1, p^-}_0(\Omega).
\end{align}
Hence, for $u \in \Wpzero{\mathcal{T}}$ we get
\begin{align*}
	&\left\langle \mathcal{A}_{\mathcal{T}}(u), u\right\rangle\\
	&= \into \l(|\nabla u|^{p(x)} +\mu_1 |\nabla u|^{q(x)} + \mu_2 |\nabla u|^{r(x)}\r) \,\diff x - \into f(x,u,\nabla u)u\,\diff x\\
	& \geq \rho_{\mathcal{T}}(\nabla u) - k_3 \int_\Omega |\nabla u|^{p(x)} \, \diff x - k_4 \int_\Omega |u|^{p^-} \, \diff x - \left\Vert\gamma_2(x)\right\Vert_1 \\
	& \  (\text{by using hypothesis} \ \eqref{H2}\eqref{H2ii})\\
	& \geq \rho_{\mathcal{T}}(\nabla u) - k_3 \int_\Omega |\nabla u|^{p(x)} \, \diff x - k_4 \, \lambda_{1, p^-}^{- 1} \int_\Omega |\nabla u|^{p^-} \, \diff x - \left\Vert\gamma_2(x)\right\Vert_1 \\
	& \ (\text{by using } \ \eqref{estimate})\\
	&\geq \left(1 - k_3 - k_4 \, \lambda_{1, p^-}^{- 1}\right) \, \rho_{\mathcal{T}}(\nabla u) - k_4 \, \lambda_{1, p^-}^{- 1} |\Omega| - \left\Vert\gamma_2(x)\right\Vert_1  \\
	& \geq \left(1 - k_3 - k_4 \, \lambda_{1, p^-}^{- 1}\right) \, \min\left\{\left\Vert\nabla u\right\Vert_{\mathcal{T}}^{p^-}, \|\nabla u\|_{\mathcal{T}}^{r^+}\right\} - k_4 \, \lambda_{1, p^-}^{- 1} |\Omega| - \left\Vert\gamma_2(x)\right\Vert_1\\
	& \ (\text{by using} \ (iii) \ \text{and} \ (iv) \ \text{of Proposition} \ \ref{Prop2}).
\end{align*}
Now, being $ 1 - k_3 - k_4 \, \lambda_{1, p^-}^{- 1} > 0$ (see hypothesis \eqref{H2}\eqref{H2ii}), we know that
$$\frac{1}{\|u\|_{1, \mathcal{T}, 0}}\l[\left(1 - k_3 - k_4 \, \lambda_{1, p^-}^{- 1}\right) \, \min\left\{\|\nabla u\|_{\mathcal{T}}^{p^-}, \|\nabla u\|_{\mathcal{T}}^{r^+}\right\} - k_4 \, \lambda_{1, p^-}^{- 1} |\Omega| - \left\Vert\gamma_2(x)\right\Vert_1\r]$$ goes to $+ \infty$ for $\|u\|_{1, \mathcal{T}, 0} \to + \infty$ (we recall that $\|u\|_{1, \mathcal{T}, 0} := \|\nabla u\|_{ \mathcal{T}}$ for all $u \in \Wpzero{\mathcal{T}}$). Consequently, with a view to Definition \ref{coercive}, we conclude that $\mathcal{A}_{\mathcal{T}}(\cdot)$ is coercive. Then, we can finally apply Theorem \ref{theorem_pseudomonotone} which gives us the existence of a nontrivial weak solution of problem \eqref{problem}.

\end{proof}

In what follows, we suppose that the nonlinearity $f\colon \Omega \times \R \times \R^N \to\R$ satisfies the following additional conditions:
\begin{enumerate}[label=\textnormal{(H$3$)},ref=\textnormal{H$3$}]
	\item\label{H3}
	\begin{enumerate}[itemsep=0.2cm,label=\textnormal{(\roman*)},ref=\textnormal{\roman*}]
		\item\label{H3i}
		there exists $k_5 \geq 0$ such that
		\begin{align*}
			(f(x, t, z) - f(x, s, z)) (t - s) \leq k_5 \, |t - s|^2
		\end{align*}
		for a.\,a. $x \in \Omega$, for all $t, s \in \R$ and for all $ z \in \R^N$;
		\item\label{H3ii}
		there exist $\gamma_3 \in L^{m^{\prime}(\cdot)}(\Omega)$, with $m \in C(\close)$ such that $1 < m(x) < p^*(x)$ for all $x \in \close$, and $k_6 \geq 0$ so that the map
		\begin{align*}
		z \to	f(x, t, z) - \gamma_3(x)
		\end{align*}
	is linear for a.\,a. $x \in \Omega$, for all $t \in \R$ and for all $ z \in \R^N$, and further
		\begin{align*}
			\left\vert f(x, t, z) - \gamma_3(x)\right\vert \leq k_6 |z|
		\end{align*}
		for a.\,a. $x \in \Omega$, for all $t \in \R$ and for all $ z \in \R^N$. Also, we suppose
		\begin{align} \label{k5k6}
			k_5 \lambda_{1,2}^{-1} + k_6 \, \lambda_{1, 2}^{- \frac{1}{2}} < 1,
		\end{align}
		being $\lambda_{1, 2}$ the first eigenvalue of the negative Laplacian with Dirichlet boundary condition (see \eqref{characterization-lambda1m}).
	\end{enumerate}
\end{enumerate}
Then, the following uniqueness result holds as well.
\begin{theorem} \label{unicity1}
	Let hypotheses \eqref{H1}, \eqref{H2} and \eqref{H3} be satisfied. If $p(x) = 2$ for all $x \in \close$, then problem \eqref{problem} admits a unique weak solution.
\end{theorem}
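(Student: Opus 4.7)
The existence portion is immediate from Theorem \ref{existence}, since hypotheses \eqref{H1} and \eqref{H2} remain in force. My task therefore reduces to the uniqueness of weak solutions, and the plan is the classical monotonicity argument: take two weak solutions $u_1, u_2 \in \Wpzero{\mathcal{T}}$, test each weak formulation with $h = u_1 - u_2$ (which is an admissible test function in $\Wpzero{\mathcal{T}}$) and subtract, obtaining
\begin{equation*}
\sum_{s \in \{p,q,r\}} \into \mu_s(x)\l(|\nabla u_1|^{s(x)-2}\nabla u_1 - |\nabla u_2|^{s(x)-2}\nabla u_2\r)\cdot\nabla(u_1-u_2)\,\diff x = \into \Xi(x)(u_1-u_2)\,\diff x,
\end{equation*}
where $\mu_p\equiv 1$ and $\Xi(x):=f(x,u_1,\nabla u_1)-f(x,u_2,\nabla u_2)$.

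For the left-hand side I will exploit the specialization $p(x)\equiv 2$: the $s=p$ integrand reduces exactly to $|\nabla(u_1-u_2)|^2$, so its contribution is $\|\nabla(u_1-u_2)\|_2^2$, while the $s=q$ and $s=r$ summands are nonnegative by the pointwise inequality $(|z_1|^{s-2}z_1-|z_2|^{s-2}z_2)\cdot(z_1-z_2)\geq 0$ (valid for $s>1$), already used in the strict monotonicity portion of Proposition \ref{PropertiesA}. Consequently the left-hand side is bounded below by $\|\nabla(u_1-u_2)\|_2^2$.

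For the right-hand side I will perform a telescoping splitting
\begin{equation*}
\Xi(x) = \bigl[f(x,u_1,\nabla u_1)-f(x,u_2,\nabla u_1)\bigr] + \bigl[f(x,u_2,\nabla u_1)-f(x,u_2,\nabla u_2)\bigr].
\end{equation*}
Hypothesis \eqref{H3}\eqref{H3i} controls the first bracket after multiplication by $(u_1-u_2)$ by $k_5|u_1-u_2|^2$. Hypothesis \eqref{H3}\eqref{H3ii} makes the second bracket a linear function of $\nabla u_1-\nabla u_2$ (the constants $\gamma_3(x)$ cancel), bounded in modulus by $k_6|\nabla(u_1-u_2)|$. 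A single Cauchy--Schwarz application then yields
\begin{equation*}
\into \Xi(x)(u_1-u_2)\,\diff x \leq k_5\|u_1-u_2\|_2^2 + k_6\|\nabla(u_1-u_2)\|_2\|u_1-u_2\|_2.
\end{equation*}

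Finally, combining the two estimates, plugging in the Poincar\'e bound $\|u_1-u_2\|_2\leq \lambda_{1,2}^{-1/2}\|\nabla(u_1-u_2)\|_2$ obtained from the variational characterization \eqref{characterization-lambda1m}, and factoring $\|\nabla(u_1-u_2)\|_2^2$, I will reach
\begin{equation*}
\bigl(1-k_5\lambda_{1,2}^{-1}-k_6\lambda_{1,2}^{-1/2}\bigr)\|\nabla(u_1-u_2)\|_2^2 \leq 0.
\end{equation*}
The coefficient is strictly positive by \eqref{k5k6}, hence $\nabla(u_1-u_2)\equiv 0$ and therefore $u_1=u_2$ in $\Wpzero{\mathcal{T}}$. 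I do not anticipate any serious obstacle; the only delicate point is extracting from hypothesis \eqref{H3}\eqref{H3ii} the clean linear estimate $|f(x,u_2,\nabla u_1)-f(x,u_2,\nabla u_2)|\leq k_6|\nabla(u_1-u_2)|$, which relies crucially on both the linearity in $z$ and the cancellation of $\gamma_3(x)$ under subtraction.
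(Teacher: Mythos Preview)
Your proposal is correct and follows essentially the same approach as the paper: testing the difference of the two weak formulations with $u_1-u_2$, discarding the nonnegative $q$- and $r$-terms by monotonicity, splitting the right-hand side exactly as you describe, and closing with the Poincar\'e estimate from \eqref{characterization-lambda1m} together with \eqref{k5k6}. The paper's argument is identical in structure and in every key step.
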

\begin{proof}
	Let $u, w \in \Wpzero{\mathcal{T}}$ be two weak solutions to problem \eqref{problem} with $p(x) = 2$. Our aim is to prove that $u = w$. To this purpose, we point out that as $u$ and $w$ are weak solutions to problem \eqref{problem} with $p(x) = 2$, then we have
	\begin{align*}
		\int_\Omega \l (\nabla u + \mu_1(x)|\nabla u|^{q(x)-2} \nabla u + \mu_2(x)|\nabla u|^{r(x)-2}\nabla u \r) \cdot \nabla h \, \diff x 	=  \int_\Omega f(x,u, \nabla u) \ h \, \diff x
\end{align*}
	and
		\begin{align*}
		\int_\Omega \l (\nabla w + \mu_1(x)|\nabla w|^{q(x)-2}\nabla w + \mu_2(x)|\nabla w|^{r(x)-2}\nabla w \r) \cdot \nabla h \, \diff x 		=  \int_\Omega f(x, w, \nabla w) \ h \, \diff x
	\end{align*}
for all $h \in \Wpzero{\mathcal{T}}$. Now, if we choose $h = u - w$, from the previous equalities we deduce that
\begin{align*}
		& \int_\Omega |\nabla (u - w)|^2 \, \diff x + \int_\Omega \mu_1(x) \left(|\nabla u|^{q(x)-2} \nabla u - |\nabla w|^{q(x)-2}\nabla w\right) \cdot \nabla (u - w) \, \diff x\\
		& + \int_\Omega \mu_2(x) \left(|\nabla u|^{r(x)-2}\nabla u - |\nabla w|^{r(x)-2} \nabla w\right) \cdot \nabla (u - w) \, \diff x \\
	 =& \int_\Omega (f(x, u, \nabla u) - f(x, w, \nabla u)) (u - w) \, \diff x\\
	& + \int_\Omega  (f(x, w, \nabla u) - f(x, w, \nabla w)) (u - w) \, \diff x.
\end{align*}
Taking into account that both
\begin{align*}
	\int_\Omega \mu_1(x) \left(|\nabla u|^{q(x)-2} \nabla u - |\nabla w|^{q(x)-2}\nabla w\right) \cdot \nabla (u - w) \, \diff x
\end{align*}
and
\begin{align*}
	\int_\Omega \mu_2(x) \left(|\nabla u|^{r(x)-2}\nabla u - |\nabla w|^{r(x)-2} \nabla w\right)  \cdot \nabla (u - w) \, \diff x
\end{align*}
 are nonnegative, we have
 \begin{align*}
 	 \| \nabla (u - w) \|_2^2 \, = \int_\Omega |\nabla (u - w)|^2 \, \diff x & \leq  \int_\Omega (f(x, u, \nabla u) - f(x, w, \nabla u)) (u - w) \, \diff x\\
&	+ \int_\Omega  (f(x, w, \nabla u) - f(x, w, \nabla w)) (u - w) \, \diff x.
 \end{align*}
Now, we stress that \eqref{H3}\eqref{H3i} guarantees that
\begin{align} \label{eqh3i}
\int_\Omega (f(x, u, \nabla u) - f(x, w, \nabla u)) (u - w) \, \diff x \leq k_5 \| u - w \|_2^2
\end{align}
for some $k_5 \geq 0$ and for a.a. $x \in \Omega$. Also, using hypothesis \eqref{H3}\eqref{H3ii} we get the inequality
\begin{align} \label{eqh3ii}
	 &\int_\Omega  (f(x, w, \nabla u) - f(x, w, \nabla w)) (u - w) \, \diff x
	\end{align}
\begin{align*}
	 &\leq \,k_6 \int_\Omega |u - w| \, |\nabla (u - w)| \, \diff x
\end{align*}
for some $k_6 \geq 0$ and for a.a. $x \in \Omega$.

Consequently, thanks to \eqref{eqh3i} and \eqref{eqh3ii} we can write that
\begin{align*}
	\| \nabla (u - w) \|_2^2 \,  & \leq
	k_5 \| u - w \|_2^2 + k_6 \int_\Omega |u - w| \, |\nabla (u - w)| \, \diff x\\
	& \leq	k_5 \| u - w \|_2^2 + k_6 \,\|u - w\|_2 \, \|\nabla (u - w)\|_2\\
	&  \ (\text{by the H\"{o}lder inequality})\\
	& \leq \left(k_5 \lambda_{1,2}^{-1} + k_6 \, \lambda_{1, 2}^{- \frac{1}{2}} \right) \, \|\nabla (u - w)\|_2^2\\
	& \ (\text{by using} \ \eqref{characterization-lambda1m}).
\end{align*}
Hence, taking into account that $\left(k_5 \lambda_{1,2}^{-1} + k_6 \, \lambda_{1, 2}^{- 1/2}\right) < 1$ (see \eqref{k5k6}), we finally conclude that $u = w$.
\end{proof}
Now, we suppose $\mu_1(\cdot)  \in \Linf$ such that
\begin{align} \label{infmu1}
	\inf \mu_1 := \inf_{x \in \close} \mu_1(x) > 0.
\end{align}
In addition, we replace condition \eqref{k5k6} of hypothesis \eqref{H3} by
\begin{align} \label{k5k6new}
	k_5 \lambda_{1,2}^{-1} + k_6 \, \lambda_{1, 2}^{- \frac{1}{2}} < \inf \mu_1.
\end{align} Then, we have the following uniqueness result.
\begin{theorem} \label{unicity2}
	Let hypotheses \eqref{H1}, \eqref{H2} and \eqref{H3} (where we replace \eqref{k5k6} by \eqref{k5k6new}) be satisfied. In addition, assume that $q(x) = 2$ for all $x \in \close$ and \eqref{infmu1} hold as well. Then, problem \eqref{problem} admits a unique weak solution.
\end{theorem}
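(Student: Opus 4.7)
The plan is to mimic the argument of Theorem \ref{unicity1}, but now extract the quadratic control from the middle (variable exponent $q$) term rather than from the $p$-term. Let $u, w \in \Wpzero{\mathcal{T}}$ be two weak solutions of \eqref{problem} with $q(x) = 2$ for all $x \in \close$. I would subtract the weak formulations \eqref{weakeq} for $u$ and $w$ and test with $h = u - w \in \Wpzero{\mathcal{T}}$, which is admissible by Proposition \ref{embedding}.

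The left-hand side splits into the three monotonicity contributions. For the $p(\cdot)$ and $r(\cdot)$ terms I would invoke the elementary strict monotonicity inequality
\begin{align*}
	(|\xi|^{s-2}\xi - |\eta|^{s-2}\eta)\cdot(\xi - \eta) \geq 0 \quad \text{for all} \ s > 1, \ \xi, \eta \in \R^N,
\end{align*}
to discard these contributions as nonnegative. Since $q(x) \equiv 2$, the middle term collapses to
\begin{align*}
	\int_\Omega \mu_1(x) (\nabla u - \nabla w) \cdot \nabla(u - w)\,\diff x = \int_\Omega \mu_1(x) |\nabla(u - w)|^2\,\diff x \geq \inf \mu_1 \cdot \|\nabla(u - w)\|_2^2,
\end{align*}
thanks to \eqref{infmu1}. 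This replaces the $\|\nabla(u-w)\|_2^2$ coming from the Laplacian part in the proof of Theorem \ref{unicity1}, but with a constant $\inf\mu_1$ in front.

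On the right-hand side I would split the reaction difference as in the proof of Theorem \ref{unicity1}, writing
\begin{align*}
	f(x, u, \nabla u) - f(x, w, \nabla w) = [f(x, u, \nabla u) - f(x, w, \nabla u)] + [f(x, w, \nabla u) - f(x, w, \nabla w)],
\end{align*}
and estimate the first bracket with \eqref{H3}\eqref{H3i} (yielding $k_5 \|u-w\|_2^2$) and the second with \eqref{H3}\eqref{H3ii} together with H\"older's inequality (yielding $k_6 \|u-w\|_2\|\nabla(u-w)\|_2$). Applying the Poincar\'e inequality via \eqref{characterization-lambda1m} with $m = 2$ then gives
\begin{align*}
	\inf \mu_1 \cdot \|\nabla(u - w)\|_2^2 \leq \l(k_5 \lambda_{1,2}^{-1} + k_6 \lambda_{1,2}^{-1/2}\r) \|\nabla(u - w)\|_2^2.
\end{align*}
The hypothesis \eqref{k5k6new} forces $\|\nabla(u-w)\|_2 = 0$, hence $u = w$ by Poincar\'e again.

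The only genuine obstacle is the one already indicated: we must be careful that the embedding $\Wpzero{\mathcal{T}} \hookrightarrow W^{1,2}_0(\Omega)$ is available so that the $L^2$-norms and $\lambda_{1,2}$ are meaningful for $u - w$. This is guaranteed by Proposition \ref{embedding}\textnormal{(i)}, since the condition $q(x) = 2 \geq p(x)$ would need $p(x) \leq 2$; actually one needs $u - w \in W^{1,2}_0(\Omega)$, which follows because $\mu_1(x) |\nabla(u-w)|^2 \in L^1(\Omega)$ and $\inf \mu_1 > 0$ implies $|\nabla(u-w)|^2 \in L^1(\Omega)$. Beyond this bookkeeping the proof is a verbatim adaptation of Theorem \ref{unicity1} with the roles of the $p$-term and the $\mu_1$-weighted $q$-term interchanged.
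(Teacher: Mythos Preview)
Your proposal is correct and follows essentially the same route as the paper's own proof: test the difference of the weak formulations with $h=u-w$, discard the nonnegative $p(\cdot)$- and $r(\cdot)$-monotonicity terms, extract $\inf\mu_1\,\|\nabla(u-w)\|_2^2$ from the $q\equiv 2$ term, split the reaction as in Theorem \ref{unicity1}, and conclude via \eqref{characterization-lambda1m} and \eqref{k5k6new}. Your closing remark that $\inf\mu_1>0$ together with $\mu_1|\nabla(u-w)|^2\in L^1(\Omega)$ forces $\nabla(u-w)\in L^2(\Omega)$ is a useful justification that the paper leaves implicit.
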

\begin{proof}
Let	$u, w \in \Wpzero{\mathcal{T}}$ be two weak solutions to problem \eqref{problem} with $q(x) = 2$. Then, we know that
	\begin{align*}
		\int_\Omega \l (|\nabla u|^{p(x)-2} \nabla u + \mu_1(x) \nabla u \ + \mu_2(x)|\nabla u|^{r(x)-2}\nabla u \r) \cdot \nabla h \, \diff x
		=  \int_\Omega f(x,u, \nabla u) \  h \, \diff x
	\end{align*}
	and
	\begin{align*}
		\int_\Omega \l (|\nabla w|^{p(x)-2}\nabla w + \mu_1(x) \nabla w + \mu_2(x)|\nabla w|^{r(x)-2}\nabla w \r) \cdot \nabla h \, \diff x
		=  \int_\Omega f(x, w, \nabla w) \  h \, \diff x
	\end{align*}
	for all $h \in \Wpzero{\mathcal{T}}$. Now, if we take $h = u - w$, using the previous equalities, we get
	\begin{align*}
		& \int_\Omega \left(|\nabla u|^{p(x)-2} \nabla u - |\nabla w|^{p(x)-2}\nabla w\right)  \cdot \nabla (u - w) \, \diff x + \int_\Omega \mu_1(x) |\nabla (u - w)|^2 \, \diff x\\
		& + \int_\Omega \mu_2(x) \left(|\nabla u|^{r(x)-2}\nabla u - |\nabla w|^{r(x)-2} \nabla w\right)  \cdot \nabla (u - w)  \, \diff x \\
		 =& \int_\Omega (f(x, u, \nabla u) - f(x, w, \nabla u)) (u - w) \, \diff x\\
		& + \int_\Omega  (f(x, w, \nabla u) - f(x, w, \nabla w)) (u - w) \, \diff x.
	\end{align*}
	Since both
	\begin{align*}
		\int_\Omega \left(|\nabla u|^{p(x)-2} \nabla u - |\nabla w|^{p(x)-2}\nabla w\right)  \cdot \nabla (u - w) \, \diff x
	\end{align*}
	and
	\begin{align*}
		\int_\Omega \mu_2(x) \left(|\nabla u|^{r(x)-2}\nabla u - |\nabla w|^{r(x)-2} \nabla w\right)  \cdot \nabla (u - w) \, \diff x
	\end{align*}
	are nonnegative, further we have that
	\begin{align*}
	\inf \mu_1 \, \| \nabla (u - w) \|_2^2 \, \leq& \int_\Omega \mu_1(x) |\nabla (u - w)|^2 \, \diff x  \\
		 \leq&  \int_\Omega (f(x, u, \nabla u) - f(x, w, \nabla u)) (u - w) \, \diff x\\
		&	+ \int_\Omega  (f(x, w, \nabla u) - f(x, w, \nabla w)) (u - w) \, \diff x.
	\end{align*}
Following the similar arguments to the ones in the proof of Theorem \ref{unicity1}, then we obtain
\begin{align*}
\inf \mu_1  \, \| \nabla (u - w) \|_2^2	\,\leq \,\left(k_5 \lambda_{1,2}^{-1} + k_6 \, \lambda_{1, 2}^{- \frac{1}{2}} \right) \, \|\nabla (u - w)\|_2^2.
\end{align*}
Hence, according to \eqref{k5k6new}, we conclude that $u = w$, that is, the claim holds.
\end{proof}

Finally, we emphasize that if $\mu_2(\cdot)  \in \Linf$ is such that
\begin{align} \label{infmu2}
	\inf \mu_2 := \inf_{x \in \close} \mu_2(x) > 0
\end{align}
and in addition we replace condition \eqref{k5k6} of hypothesis \eqref{H3} by
\begin{align} \label{k5k6new2}
	k_5 \lambda_{1,2}^{-1} + k_6 \, \lambda_{1, 2}^{- \frac{1}{2}} < \inf \mu_2,
\end{align} then, arguing like in the proof of Theorem \ref{unicity2}, we can deduce the following result.
\begin{theorem} \label{unicity3}
	Let hypotheses \eqref{H1}, \eqref{H2} and \eqref{H3} (where we replace \eqref{k5k6} by \eqref{k5k6new2}) be satisfied. In addition, assume that $r(x) = 2$ for all $x \in \close$ and \eqref{infmu2} hold as well. Then, problem \eqref{problem} admits a unique weak solution.
\end{theorem}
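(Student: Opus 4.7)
The proof will follow the same template as that of Theorem \ref{unicity2}; the only change is that now the coercivity on the left-hand side comes from the $r$-phase (frozen to an $L^2$ Dirichlet form by $r(x)\equiv 2$) rather than from the $q$-phase, while the $p$- and $q$-phases play the passive monotone role.

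First, I would take two weak solutions $u,w\in\Wpzero{\mathcal{T}}$ of \eqref{problem} with $r(x)=2$, write the corresponding identities in Definition \ref{weaksolution}, subtract them, and test with the admissible function $h=u-w\in\Wpzero{\mathcal{T}}$. After regrouping, this gives
\begin{align*}
	& \int_\Omega \bigl(|\nabla u|^{p(x)-2}\nabla u-|\nabla w|^{p(x)-2}\nabla w\bigr)\cdot\nabla(u-w)\,\diff x\\
	& + \int_\Omega \mu_1(x)\bigl(|\nabla u|^{q(x)-2}\nabla u-|\nabla w|^{q(x)-2}\nabla w\bigr)\cdot\nabla(u-w)\,\diff x\\
	& + \int_\Omega \mu_2(x)|\nabla(u-w)|^2\,\diff x\\
	 =&\int_\Omega (f(x,u,\nabla u)-f(x,w,\nabla u))(u-w)\,\diff x\\
	&+\int_\Omega (f(x,w,\nabla u)-f(x,w,\nabla w))(u-w)\,\diff x.
\end{align*}

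Next, I would discard the first two integrals on the left-hand side: each is nonnegative thanks to the elementary monotonicity inequality $(|z_1|^{s-2}z_1-|z_2|^{s-2}z_2)\cdot(z_1-z_2)\geq 0$ with $s=p(x)>1$ or $s=q(x)>1$, together with $\mu_1\geq 0$. For the remaining term I would use \eqref{infmu2} to bound
\begin{align*}
	\inf\mu_2\,\|\nabla(u-w)\|_2^{\,2}\leq \int_\Omega\mu_2(x)|\nabla(u-w)|^2\,\diff x,
\end{align*}
so that the entire right-hand side dominates $\inf\mu_2\,\|\nabla(u-w)\|_2^{\,2}$.

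On the right-hand side, I would treat the two pieces exactly as in the proof of Theorem \ref{unicity1}. Hypothesis \eqref{H3}\eqref{H3i} gives
\begin{align*}
\int_\Omega (f(x,u,\nabla u)-f(x,w,\nabla u))(u-w)\,\diff x\leq k_5\,\|u-w\|_2^{\,2},
\end{align*}
while the linearity and bound in \eqref{H3}\eqref{H3ii} (note that the offset $\gamma_3(x)$ cancels when we subtract) together with H\"older give
\begin{align*}
\int_\Omega (f(x,w,\nabla u)-f(x,w,\nabla w))(u-w)\,\diff x\leq k_6\,\|u-w\|_2\,\|\nabla(u-w)\|_2.
\end{align*}
Combining these with the Poincar\'e-type estimate coming from the variational characterization \eqref{characterization-lambda1m} of $\lambda_{1,2}$, namely $\|u-w\|_2^{\,2}\leq \lambda_{1,2}^{-1}\|\nabla(u-w)\|_2^{\,2}$, I obtain
\begin{align*}
\inf\mu_2\,\|\nabla(u-w)\|_2^{\,2}\leq\bigl(k_5\lambda_{1,2}^{-1}+k_6\lambda_{1,2}^{-1/2}\bigr)\|\nabla(u-w)\|_2^{\,2}.
\end{align*}

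Finally, the smallness assumption \eqref{k5k6new2} ensures $k_5\lambda_{1,2}^{-1}+k_6\lambda_{1,2}^{-1/2}<\inf\mu_2$, so the inequality above forces $\|\nabla(u-w)\|_2=0$; since $u-w\in\Wpzero{\mathcal{T}}\subset W^{1,2}_0(\Omega)$, this yields $u=w$. There is no real obstacle to overcome here beyond carefully transplanting the $q$-version argument to the $r$-version; the only place where one must be slightly careful is to verify that the duality in the first integral on the right-hand side (which contains $\nabla u$ inside $f$) is legitimate, but this is immediate because $\gamma_3\in L^{m'(\cdot)}(\Omega)$ and $u-w\in L^{m(\cdot)}(\Omega)$ by Proposition \ref{embedding}, and the linear part in $z$ is handled via H\"older as above.
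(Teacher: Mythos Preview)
Your proof is correct and follows essentially the same approach as the paper, which simply states that the result is obtained ``arguing like in the proof of Theorem \ref{unicity2}'' with the roles of the $q$- and $r$-phases interchanged. The only extra care you add beyond the paper's terse indication is the explicit remark that $u-w\in W^{1,2}_0(\Omega)$ (which is indeed guaranteed here by $r(x)\equiv 2$ and $\inf\mu_2>0$, so that $t^2\prec\mathcal T(x,t)$), and this is needed to invoke \eqref{characterization-lambda1m} for $\lambda_{1,2}$.
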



\section{Regularity results for minimizers and higher integrability} \label{sec6}

Given a bounded domain $\Omega \subseteq \mathbb{R}^N$, let now $p(\cdot), q(\cdot), r(\cdot)$ and $\mu_1(\cdot), \,\mu_2(\cdot)$ be functions satisfying the following conditions:
\begin{enumerate}[label=\textnormal{($\text{H}^{\prime}$)},ref=\textnormal{$\text{H}^{\prime}$}]
	\item\label{I}
	$p, q, r \in C^{0, \sigma}(\Omega)$, with $\sigma \in (0, 1]$, are such that $$1 < p_0 \leq p(x) \leq q(x) \leq r(x)$$ for all $x\in\Omega$, where $p_0$ is a fixed constant strictly larger than one, and $\mu_1, \mu_2 \in \Linf$ are such that $$ \mu_1(x)\geq 0 \quad \text{and} \quad \mu_2(x)  \geq 0$$
	for all $x\in\Omega$. In addition, we suppose
	 \begin{align} \label{ineqHprime}
		\sup_{x \in \Omega}\frac{q(x)}{p(x)} < 1 + \frac{\sigma}{N} \quad \text{and} \quad  \sup_{x \in \Omega}\frac{r(x)}{p(x)} < 1 + \frac{\sigma}{N}.
	\end{align}
\end{enumerate}
We denote the H$\ddot{\text{o}}$lder constants of $p(\cdot), q(\cdot),  r(\cdot)$ by $L_p, L_q, L_r$, respectively, that is, we here put
\begin{align} \label{Holderconstant}
	L_p := \sup_{x, y \in \Omega \, \text{and} \, x \neq y} \frac{|p(x) - p(y)|}{|x - y|^{\sigma}} < + \infty,\nonumber\\
	L_q := \sup_{x, y \in \Omega \, \text{and} \, x \neq y} \frac{|q(x) - q(y)|}{|x - y|^{\sigma}} < + \infty, \nonumber\\
	L_r := \sup_{x, y \in \Omega \, \text{and} \, x \neq y} \frac{|r(x) - r(y)|}{|x - y|^{\sigma}} < + \infty.
\end{align}
By $B_R$ we mean a ball contained in $\Omega$ of radius $R > 0$, and by $B_R(y)$ we denote a ball contained in $\Omega$ of center $y$ and radius $R > 0$. Also, for  given $B_R \subset \Omega$ we put
\begin{align*}
	p_{R} = \inf_{x \in B_R} p(x) \quad  \text{and} \quad  p^{R} = \sup_{x \in B_R} p(x),\\
	q_{R} = \inf_{x \in B_R} q(x) \quad  \text{and} \quad  q^{R} = \sup_{x \in B_R} q(x),\\
	r_{R} = \inf_{x \in B_R} r(x) \quad  \text{and} \quad  r^{R} = \sup_{x \in B_R} r(x).
\end{align*}
Further, given a measurable function $u: \Omega \rightarrow \R$ we write $u_{B_{R}}$ for the integral average of $u$ on $B_{R}$, namely
\begin{align} \label{media}
	u_{B_{R}} := -\hspace{-0,38cm}\int_{B_R} u \, \diff x = \frac{1}{\left\vert B_{R}\right\vert} \ \int_{B_{R}} u \, \diff x,
\end{align}
being $\left\vert B_{R}\right\vert$ the Lebesgue measure of $B_R$.

Finally, we stress that from now on we use $C$ to denote a generic constant, which may change its value from line to line, but does not depend on the crucial quantities. This is in order to streamline the notation.
When we need to specify a constant, we use letters with index.

Now, under hypothesis \eqref{I}, we consider the integral functional $\mathcal{F}_{\mathcal{T}}(u; \Omega)$ defined by
\begin{align} \label{intfunctionalT}
	\mathcal{F}_{\mathcal{T}}(u; \Omega) := \int_{\Omega} \mathcal{T}(x, |\nabla u|) \, \diff x
\end{align}
for all $u \in W^{1, 1}(\Omega)$, where $\mathcal{T}$
is the nonlinear function introduced in Section \ref{sec3}.

\begin{definition} \label{minimizer}
	We claim $u \in W^{1, p(x)}(\Omega)$ a local minimizer of the functional $\mathcal{F}_{\mathcal{T}}$ if $$\mathcal{T}(x, |\nabla u|) \in L^1(\Omega),$$
	and in addition the minimality condition
	\begin{align*}
		\mathcal{F}_{\mathcal{T}}(u; \, \text{supp}\, (u-w)) \leq \mathcal{F}_{\mathcal{T}}(w; \, \text{supp} \, (u-w))
	\end{align*}
	is satisfied for each $w \in W^{1, p(x)}(\Omega)$ such that $supp \, (u -w) \subset \Omega$.
\end{definition}

Our aim is in establishing local regularity results for the minimizers of the integral functional $\mathcal{F}_{\mathcal{T}}$ introduced in \eqref{intfunctionalT}. We point out that an useful tool for this purpose is the following remarkable result of Colombo-Mingione, see \cite[Theorem 1.6]{Colombo-Mingione-2015-1}.

\begin{theorem} [Sobolev-Poincar\'{e} inequality \cite{Colombo-Mingione-2015-1}] \label{CM}
	 Let $\tilde{p}, \, \tilde{q} \in \R$ be such that $1 < \tilde{p} < \tilde{q} $ and let $\nu \in C^{0, \sigma_1}(\Omega)$, with $\sigma_1  \in (0, 1]$, be a function such that $\nu(x) \geq 0$ for all $x \in \Omega$. Also, let
	  $H\colon \Omega \times [0, + \infty)  \to [0, +\infty)$ be defined by
	 \begin{align*}
	 	H(x, t) = t^{\tilde{p}} +\nu(x) \,t^{\tilde{q}}
	 \end{align*}
	 for all $x \in \Omega$ and all $t \geq 0$. If
	 \begin{align*}
	 \frac{\tilde{q}}{\tilde{p}} < 1 + \frac{\sigma_1}{N},
	 \end{align*}
	 then there exist a constant $c_1$ depending by $N, \tilde{p}, \tilde{q}$ and exponents $d_1 > 1 > d$, depending by $N, p, q, \sigma_1$, such that
	\begin{equation}\label{ECM}
		\l(-\hspace{-0,36cm} \int_{B_R} \l[H \l(x, \frac{|w-w_{B_R}|}{R}\r)\r]^{d_1} \, \diff x \r)^{\frac{1}{d_1}} \leq c_1 \left(	-\hspace{-0,36cm} \int_{B_R} H (x, |\nabla w|)^{d} \, \diff x \right)^{\frac{1}{d}}
	\end{equation}
	holds for all $w \in W^{1, \tilde{p}}(B_R)$ and for all $B_R \subset \Omega$ with $R \leq 1$.
\end{theorem}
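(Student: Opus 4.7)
My plan is to split the integrand $H(x,t) = t^{\tilde p} + \nu(x) t^{\tilde q}$ into its two phases and treat each separately. Using the elementary inequality $(a+b)^{d_1} \leq 2^{d_1-1}(a^{d_1}+b^{d_1})$ on the left-hand side of \eqref{ECM} and the bound $H(x,s)^d \geq \tfrac{1}{2}(s^{\tilde p d} + \nu(x)^d s^{\tilde q d})$ on the right-hand side (valid since $H(x,s) \geq s^{\tilde p}$ and $H(x,s) \geq \nu(x) s^{\tilde q}$ individually), the task reduces to controlling the pure $\tilde p$-phase and the $\nu$-weighted $\tilde q$-phase in the $L^{d_1}$-average by the two corresponding terms on the right.

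For the $\tilde p$-phase I would apply the classical sub-Sobolev-Poincar\'e inequality: choose $d \in (N/(N+\tilde p), 1)$ and set $d_1 := Nd/(N-\tilde p d)$, so that $d_1 > 1$ and the Sobolev conjugate of $\tilde p d$ equals exactly $\tilde p d_1$. This yields directly
\begin{equation*}
\l(-\hspace{-0.36cm}\int_{B_R}\l(\frac{|w-w_{B_R}|}{R}\r)^{\tilde p d_1}\diff x\r)^{1/d_1} \leq c\,\l(-\hspace{-0.36cm}\int_{B_R}|\nabla w|^{\tilde p d}\diff x\r)^{1/d},
\end{equation*}
and the right-hand side is bounded by $\bigl(-\hspace{-0.36cm}\int_{B_R}H(x,|\nabla w|)^d\,\diff x\bigr)^{1/d}$, as required.

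For the $\tilde q$-phase I would dichotomize on the size of $\nu$ on $B_R$, exploiting the H\"older regularity $|\nu(x)-\nu(y)| \leq L_\nu R^{\sigma_1}$ for $x,y \in B_R$. In the \emph{non-degenerate} regime $\inf_{B_R}\nu \geq L_\nu R^{\sigma_1}$ one has $\nu(x) \leq 2\inf_{B_R}\nu =: 2\nu_R$ on $B_R$, so $\nu$ is comparable to a constant weight; one extracts $\nu_R^{d_1}$, applies the Sobolev-Poincar\'e of Step $1$ to the pure $\tilde q$ power, and finally writes $\nu_R^{d_1} = \nu_R^d \cdot \nu_R^{d_1-d}$, absorbing the latter factor into the weight $\nu(x)^d$ on the right via the equivalence $\nu(x) \simeq \nu_R$. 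In the \emph{degenerate} regime $\sup_{B_R}\nu \leq 2L_\nu R^{\sigma_1}$ I would use the smallness $\nu(x)^{d_1} \leq CR^{\sigma_1 d_1}$ to write
\begin{equation*}
\nu(x)^{d_1}\l(\frac{|w-w_{B_R}|}{R}\r)^{\tilde q d_1} \leq CR^{\sigma_1 d_1}\l(\frac{|w-w_{B_R}|}{R}\r)^{\tilde q d_1},
\end{equation*}
and then apply Step $1$ after a H\"older interpolation that passes from the $L^{\tilde q d_1}$-average to the $L^{\tilde p d_1}$-average. This interpolation costs a factor of order $R^{-N(1/\tilde p - 1/\tilde q)d_1}$ (via the volume $|B_R| \sim R^N$ and the scale-invariant structure), which is exactly balanced by the gain $R^{\sigma_1 d_1}$ precisely because $\tilde q/\tilde p < 1 + \sigma_1/N$.

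The principal obstacle is to choose $d$ and $d_1$ once and for all so that Step $1$ applies \emph{and} the gain-versus-deficit balance in the degenerate regime yields a constant independent of $R \leq 1$. The sharpness of the gap condition $\tilde q/\tilde p < 1 + \sigma_1/N$ is what permits this balance with strict margin, while its failure would invalidate the interpolation in the degenerate regime and the whole argument would break. Once the exponents are pinned down, summing the estimates from the two phases and the two regimes and reabsorbing constants yields \eqref{ECM}.
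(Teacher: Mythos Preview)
The paper does not prove this statement: Theorem~\ref{CM} is quoted directly from Colombo--Mingione \cite[Theorem~1.6]{Colombo-Mingione-2015-1} and invoked as a black box (in particular to derive the three-phase analogue, Theorem~\ref{DFO}). There is therefore no in-paper argument to compare against; your sketch is an attempt to reconstruct the original Colombo--Mingione proof, and its overall architecture---splitting $H$ into its two phases and dichotomizing the $\tilde q$-phase on whether $\inf_{B_R}\nu$ exceeds $L_\nu R^{\sigma_1}$---is indeed their strategy. The $\tilde p$-phase and the non-degenerate branch are handled correctly.

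The degenerate branch, however, has a genuine gap. You invoke a ``H\"older interpolation that passes from the $L^{\tilde q d_1}$-average to the $L^{\tilde p d_1}$-average'', but H\"older (equivalently Jensen) bounds lower-exponent averages by higher ones, not the reverse; since $\tilde q>\tilde p$ there is no such passage in the direction you need. Consistently, the cost $R^{-N(1/\tilde p-1/\tilde q)d_1}$ you quote balances the gain $R^{\sigma_1 d_1}$ under the condition $\sigma_1/N\geq(\tilde q-\tilde p)/(\tilde p\tilde q)$, which is \emph{strictly weaker} than the hypothesis $\tilde q/\tilde p<1+\sigma_1/N$; so whatever manipulation you have in mind cannot be the one that consumes the gap condition sharply. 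A related issue is your choice $d_1=Nd/(N-\tilde p d)$: this forces $(\tilde p d)^\ast=\tilde p d_1<\tilde q d_1$, so a Sobolev--Poincar\'e at level $\tilde p d$ cannot reach the exponent $\tilde q d_1$ required on the left. In the Colombo--Mingione argument one instead takes $d_1>1$ small enough (and $d<1$ close enough to~$1$) that $\tilde q d_1\leq(\tilde p d)^\ast$, applies Sobolev--Poincar\'e at level $\tilde p d$ directly to reach exponent $\tilde q d_1$, and then spends the full strength of $\tilde q/\tilde p<1+\sigma_1/N$ together with the smallness $\nu\leq C\,[\nu]_{C^{0,\sigma_1}}R^{\sigma_1}$ to absorb the residual exponent mismatch.
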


From Theorem \ref{CM} it is in fact possible to derive the following result (see also \cite[Lemma 2]{DeFilippis-Oh-2019}).

\begin{theorem} \label{DFO}
Let $\tilde{p}, \, \tilde{q}, \, \tilde{r} \in \R$ be such that $1 < \tilde{p} < \tilde{q} < \tilde{r} $ and let $\nu_1, \nu_2 \in C^{0, \sigma_2}(\Omega)$, with $\sigma_2  \in (0, 1]$, be such that $\nu_1(x), \, \nu_2(x) \geq 0$ for all $x \in \Omega$.

Also, let $T: \Omega \times [0, + \infty[ \rightarrow [0, + \infty[$ be defined by
\begin{align*}
	T(x, t) = t^{\tilde{p}} +\nu_1(x) \, t^{\tilde{q}} + \nu_2 \, t^{\tilde{r}}
\end{align*}
for all $x \in \Omega$ and all $t \geq 0$. If
\begin{align*}
	\frac{\tilde{q}}{\tilde{p}} < 1 + \frac{\sigma_2}{N} \quad \text{and} \quad \frac{\tilde{r}}{\tilde{p}} < 1 + \frac{\sigma_2}{N},
\end{align*}
	then there exist a constant $c_2$ depending on $N, \tilde{p}, \tilde{q}, \tilde{r}$ and $0 < d < 1$, depending on $N, \tilde{p}, \tilde{q}, \tilde{r}, \sigma_2$, such that  the inequality
	\begin{equation}\label{ECM1}
		\l(-\hspace{-0,36cm} \int_{B_R} \l[T \l(x, \frac{\left\vert w-w_{B_R}\right\vert}{R}\r)\r] \, \diff x \r) \leq c_2 \left(	-\hspace{-0,36cm} \int_{B_R} T (x, |\nabla w|)^{d} \, \diff x \right)^{\frac{1}{d}}
	\end{equation}
	holds for all $w \in W^{1, \tilde{p}}(B_R)$ and for all $B_R \subset \Omega$ with $R \leq 1$.
\end{theorem}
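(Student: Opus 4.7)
The strategy is to decompose the three-phase integrand into two two-phase pieces, apply Theorem \ref{CM} to each, and then glue the two outputs together via Jensen's inequality. Concretely, I set
\begin{align*}
H_1(x,t) := t^{\tilde{p}} + \nu_1(x)\, t^{\tilde{q}}, \qquad H_2(x,t) := t^{\tilde{p}} + \nu_2(x)\, t^{\tilde{r}},
\end{align*}
so that the pointwise relations $T(x,t)\leq H_1(x,t)+H_2(x,t)$ and $H_i(x,t)\leq T(x,t)$ both hold for every $x\in\Omega$ and every $t\geq 0$. The hypotheses $\tilde{q}/\tilde{p}<1+\sigma_2/N$ and $\tilde{r}/\tilde{p}<1+\sigma_2/N$ are precisely what is needed to invoke Theorem \ref{CM} twice: applied to the pair $(\tilde p,\tilde q)$ with weight $\nu_1$, and to $(\tilde p,\tilde r)$ with weight $\nu_2$. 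This produces constants $c^{(i)}>0$ and exponents $d_1^{(i)}>1>d^{(i)}$ (for $i=1,2$) depending on $N,\tilde{p},\tilde{q},\tilde{r},\sigma_2$, for which the estimate \eqref{ECM} holds with $H$ replaced by $H_i$ on any ball $B_R\subset\Omega$ with $R\leq 1$ and any $w\in W^{1,\tilde{p}}(B_R)$.

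Next, since $-\hspace{-0.25cm}\int_{B_R}$ is a probability measure and $d_1^{(i)}>1$, Jensen's inequality allows me to drop the exponent $d_1^{(i)}$ on the left-hand side of the two copies of \eqref{ECM}, while on the right-hand side I enlarge using $H_i\leq T$. The outcome is
\begin{align*}
-\hspace{-0.36cm}\int_{B_R} H_i\l(x,\tfrac{|w-w_{B_R}|}{R}\r)\,\diff x \leq c^{(i)} \l(-\hspace{-0.36cm}\int_{B_R} T(x,|\nabla w|)^{d^{(i)}}\,\diff x\r)^{1/d^{(i)}}, \qquad i=1,2.
\end{align*}
Now I set $d:=\max\{d^{(1)},d^{(2)}\}\in(0,1)$ and use Jensen once more, in the form $\|f\|_a\leq \|f\|_b$ for $0<a\leq b$ on a probability space, to replace $d^{(i)}$ by the common exponent $d$ in the right-hand sides. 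Summing the two resulting inequalities and using $T\leq H_1+H_2$ on the left then yields
\begin{align*}
-\hspace{-0.36cm}\int_{B_R} T\l(x,\tfrac{|w-w_{B_R}|}{R}\r)\,\diff x \leq (c^{(1)}+c^{(2)}) \l(-\hspace{-0.36cm}\int_{B_R} T(x,|\nabla w|)^d\,\diff x\r)^{1/d},
\end{align*}
which is the desired inequality \eqref{ECM1} with $c_2:=c^{(1)}+c^{(2)}$.

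The main delicacy is the exponent bookkeeping: the two applications of Theorem \ref{CM} supply unrelated pairs $(d_1^{(i)},d^{(i)})$, and one has to choose the common final exponent $d$ to be the \emph{larger} of $d^{(1)}$ and $d^{(2)}$, so that Jensen points in the correct direction on the probability measure $-\hspace{-0.25cm}\int_{B_R}$, while at the same time staying strictly below $1$ so that \eqref{ECM1} carries nontrivial content. Once this choice is made, the rest is straightforward pointwise algebra on $H_1,H_2$ and $T$.
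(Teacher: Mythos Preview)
Your proof is correct and follows precisely the route the paper indicates: the paper does not spell out a proof but simply notes that the result is derived from Theorem \ref{CM} (referring to \cite[Lemma 2]{DeFilippis-Oh-2019}), and your decomposition into the two double-phase integrands $H_1,H_2$ with a subsequent application of Theorem \ref{CM} to each is exactly this derivation. The exponent bookkeeping via Jensen and the choice $d=\max\{d^{(1)},d^{(2)}\}$ are handled cleanly.
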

Further, under the same hypotheses of Theorem \ref{DFO}, the following remark holds as well.
\begin{remark} [Remark 2 \cite{DeFilippis-Oh-2019}] \label{RemarkDeFilippisOh}
Let the hypotheses of Theorem \ref{DFO} be satisfied. Then an inequality of type \eqref{ECM1} holds for a function $w \in W^{1, T}(\Omega)$ such that $w \equiv 0$ on a set $E$ with measure $|E|$ satisfying the condition $|E| \geq \gamma \left\vert B_R\right\vert$ for some $0 < \gamma < 1$. Precisely, we have
	\begin{align} \label{SobPoiEqBIS}
	-\hspace{-0,38cm}	\int_{B_{R}} T \l(x, \frac{|w|}{R}\r)\,  \diff x &
		\leq C \l(-\hspace{-0,38cm} \int_{B_R} T(x, |\nabla w|)^{d}\, \diff x \r)^{\frac{1}{d}}
	\end{align}
	where $d \in (0, 1)$ is the same as the corresponding one appearing in \eqref{ECM1} and $C > 0$ depending on $\tilde{p}, \tilde{q}, \tilde{r}, N, \gamma$.
\end{remark}
Finally, we make use of the following well known result of Giaquinta and Modica, for whose proof we refer to Giusti \cite{Giusti-2003}.

\begin{theorem} \label{Giaquinata}
	Let $g_1 \in  L^1(\Omega)$ and suppose that for some $\vartheta \in (0, 1)$ the estimate
	\begin{align*}
			-\hspace{-0,36cm} \int_{B_{R/2}} g_1 \, \diff x \leq c_2	\l(-\hspace{-0,36cm} \int_{B_R} g_{1}^{\vartheta} \,\diff x \r)^{\frac{1}{\vartheta}} + c_2  -\hspace{-0,36cm} \int_{B_R} g_2  \, \diff x
	\end{align*}
	holds for all $B_R \Subset \Omega$ and for some positive constant $c_2$. If $g_2 \in L^m\left(B_R\right)$ for some $m > 1$, then there exists $1 < \delta < m$ such that $g_1 \in  L^{\delta}_{\rm loc}(\Omega)$ and
	\begin{align*}
		-\hspace{-0,36cm} \int_{B_{R/2}} g_{1}^\delta \, \diff x \leq c_2 \l(-\hspace{-0,36cm} \int_{B_{R}} g_1 \, \diff x  \r)^\delta + c_2 -\hspace{-0,36cm} \int_{B_R} g_{2}^\delta \, \diff x.
	\end{align*}
	
\end{theorem}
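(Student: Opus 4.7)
This is a Gehring--Giaquinta--Modica self-improving integrability statement, and my plan is to derive it through the classical Calder\'on--Zygmund stopping-time argument, exploiting the fact that the right-hand side of the hypothesis involves $g_1$ to a power $\vartheta$ strictly less than $1$. First I would pass from balls to concentric cubes (which only affects the constant $c_2$), fix a cube $Q_0 \Subset \Omega$, and extend $g_1$ and $g_2$ by zero outside $Q_0$, so that every subsequent estimate is carried out on dyadic subcubes of $Q_0$.

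For each level $\lambda > \lambda_0 := C_0 \, \frac{1}{|Q_0|}\int_{Q_0} g_1 \, \diff x$ with $C_0$ sufficiently large, I would perform the Calder\'on--Zygmund decomposition of $g_1$ at level $\lambda$, producing a family of maximal dyadic subcubes $\{Q_j\}$ satisfying $\lambda < \frac{1}{|Q_j|}\int_{Q_j} g_1 \, \diff x \leq 2^N \lambda$ and $g_1 \leq \lambda$ almost everywhere outside $\bigcup_j Q_j$. On each $Q_j$, the assumed weak reverse H\"older inequality, applied on a ball comparable to $Q_j$ and contained in a fixed dilate $\sigma Q_j \Subset Q_0$, gives
\begin{align*}
\lambda < \frac{1}{|Q_j|}\int_{Q_j} g_1 \, \diff x \leq c_2 \left(\frac{1}{|\sigma Q_j|}\int_{\sigma Q_j} g_1^{\vartheta} \, \diff x\right)^{1/\vartheta} + c_2 \, \frac{1}{|\sigma Q_j|}\int_{\sigma Q_j} g_2 \, \diff x,
\end{align*}
so at least one of the two terms on the right majorises $\lambda/(2c_2)$. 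When the first term does, I would split $g_1^{\vartheta} = g_1^{\vartheta}\chi_{\{g_1 > \eta \lambda\}} + g_1^{\vartheta}\chi_{\{g_1 \leq \eta\lambda\}}$ for a small parameter $\eta$ yet to be chosen, bound $g_1^{\vartheta} \leq (\eta\lambda)^{\vartheta - 1} g_1$ on the second piece, and thereby extract the crucial smallness factor $\eta^{1-\vartheta}$ (coming from $\vartheta < 1$), reaching $|Q_j| \leq C\eta^{1-\vartheta}\lambda^{-1}\int_{\{g_1 > \eta\lambda\}\cap \sigma Q_j} g_1 \, \diff x$. When the second term does, I trivially have $|Q_j| \leq C\lambda^{-1}\int_{\sigma Q_j} g_2 \, \diff x$.

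Summing over $j$, using the bounded overlap of the dilates $\sigma Q_j$ and the inclusion $\{g_1 > 2^N\lambda\} \cap Q_0 \subset \bigcup_j Q_j$ up to a null set, I arrive at the key good-$\lambda$ distributional inequality
\begin{align*}
\int_{\{g_1 > A\lambda\}\cap Q_0} g_1 \, \diff x \leq C_1 \eta^{1-\vartheta}\int_{\{g_1 > \eta\lambda\}\cap Q_0} g_1 \, \diff x + C_1\int_{\{g_2 > \eta\lambda\}\cap Q_0} g_2 \, \diff x
\end{align*}
with $A = 2^N$, valid for every $\lambda > \lambda_0$. Multiplying by $\lambda^{\delta - 2}$ for some $\delta > 1$ still to be chosen, integrating in $\lambda \in (\lambda_0, +\infty)$, and invoking the layer-cake identity $\int_0^\infty \lambda^{\delta - 2}\int_{\{g > \lambda\}} g \, \diff x\, \diff \lambda = (\delta - 1)^{-1}\int g^{\delta}\, \diff x$, one converts the above into
\begin{align*}
\int_{Q_0} g_1^{\delta} \, \diff x \leq C_1 A^{\delta - 1}\eta^{1 - \vartheta - (\delta - 1)} \int_{Q_0} g_1^{\delta} \, \diff x + C_2 \int_{Q_0} g_2^{\delta}\, \diff x + C_3 \lambda_0^{\delta}|Q_0|.
\end{align*}

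The concluding step is absorption: choosing $\delta > 1$ sufficiently close to $1$ and then $\eta$ small enough, the prefactor $C_1 A^{\delta - 1}\eta^{1 - \vartheta - (\delta - 1)}$ becomes strictly smaller than $1$ and the first term on the right can be moved to the left-hand side; dividing by $|Q_0|$ and recalling $\lambda_0 \simeq \frac{1}{|Q_0|}\int_{Q_0} g_1 \, \diff x$ then yields the claimed reverse estimate on concentric balls, with $\delta \in (1, m)$ depending only on $c_2$, $\vartheta$, $N$ and $m$ (the constraint $\delta < m$ being imposed solely so that the $g_2$-integral on the right remains finite). The main technical obstacle is precisely this calibration: the exponent $1 - \vartheta - (\delta - 1)$ of $\eta$ is positive only for $\delta$ sufficiently close to $1$, so one must simultaneously take $\delta$ close to $1$ (to ensure that exponent is positive), $\eta$ small (to exploit it) and $A^{\delta - 1}$ close to $1$, all while keeping careful track of how $c_2$ propagates through the stopping-time parameter $A$; this balancing sharply determines both the admissible self-improving exponent and the dependence of the final constant.
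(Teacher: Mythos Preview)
The paper does not supply a proof of this theorem at all: it is quoted as a known result, with the reader referred to Giusti \cite{Giusti-2003}. Your outline is precisely the classical Gehring--Giaquinta--Modica argument that one finds in Giusti, so in that sense you are reproducing the ``paper's'' proof.

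There is, however, one genuine gap in your plan that you do not flag. In the absorption step you write
\[
\int_{Q_0} g_1^{\delta}\,\diff x \;\le\; C_1 A^{\delta-1}\eta^{\,1-\vartheta-(\delta-1)}\int_{Q_0} g_1^{\delta}\,\diff x \;+\;\text{(finite terms)}
\]
and propose to subtract the first term on the right. This is only legitimate if $\int_{Q_0} g_1^{\delta}\,\diff x<\infty$, which is exactly the conclusion you are trying to establish; a priori you only have $g_1\in L^1$. The standard remedy is to run the entire argument with $g_1$ replaced by a truncation $g_1^{(k)}:=\min\{g_1,k\}$ (for which the reverse H\"older hypothesis is easily seen to persist, possibly with a slightly worse constant), obtain the estimate with constants independent of $k$, and then let $k\to\infty$ via monotone convergence. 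Without this step the absorption is circular. A second, smaller point: extending $g_1,g_2$ by zero outside $Q_0$ destroys the reverse H\"older hypothesis for cubes that overlap $\partial Q_0$; you must either restrict the stopping-time argument to an interior cube $Q_0'\Subset Q_0$ and ensure that every dilate $\sigma Q_j$ stays inside $Q_0$, or use a local-to-global covering at the end. Both issues are routine but must be handled explicitly.
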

Here, we start by giving the following lemma which will be used in order to establish our regularity results.

\begin{lemma} \label{Romegaprime}
	Let hypothesis \eqref{I} be satisfied. In addition, let $0 < R_0 \leq 1$  satisfy the following condition
	\begin{align} \label{ROmegaprime}
		R_{0}^{\sigma} \leq \frac{p_0}{2^{1 + \sigma} L_r} \l(1 + \frac{\sigma}{N} - \sup_{x \in \Omega}\frac{r(x)}{p(x)} \r),
	\end{align}
	where $\sigma$ is as in hypothesis \eqref{I} and $L_r$ denotes the  H$\ddot{\text{o}}$lder constant of $r$ (see \eqref{Holderconstant}). Then,
	\begin{align*}
		W^{1, p(x)} \left(B_R\right) \subset L^{r^{R}} \left(B_R\right) \subset L^{r(x)} \left(B_R\right)
	\end{align*}
	is satisfied for all balls $B_{R} \Subset \Omega$ with radius $R \leq R_{0} $.
\end{lemma}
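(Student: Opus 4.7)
The plan is to handle the two inclusions separately, tackling the easy one first and then reducing the first inclusion to a classical Sobolev embedding combined with a careful comparison of exponents.

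For the second inclusion $L^{r^R}(B_R) \subset L^{r(x)}(B_R)$, I would argue directly from $r(x) \leq r^R$ on $B_R$: given $u \in L^{r^R}(B_R)$, the pointwise bound $|u|^{r(x)} \leq |u|^{r^R} + 1$ together with $\left\vert B_R\right\vert < \infty$ yields $\rho_{r(\cdot)}(u) < \infty$, whence $u \in L^{r(x)}(B_R)$. This uses nothing beyond $B_R$ having finite measure.

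For the main inclusion $W^{1,p(x)}(B_R) \subset L^{r^R}(B_R)$, the strategy is to pass from the variable exponent to the constant exponent $p_R$. Since $p(x) \geq p_R \geq p_0 > 1$ on $B_R$ and $\left\vert B_R\right\vert < \infty$, the standard continuous embedding recalled in Section~\ref{sec2} gives $L^{p(x)}(B_R) \hookrightarrow L^{p_R}(B_R)$, and so any $u \in W^{1,p(x)}(B_R)$ lies in the classical Sobolev space $W^{1,p_R}(B_R)$. Applying the classical Sobolev embedding, I obtain $W^{1,p_R}(B_R) \hookrightarrow L^{p_R^*}(B_R)$ when $p_R < N$ (with the straightforward $L^\infty$-type embedding when $p_R \geq N$, in which case the conclusion is immediate since $r^R < \infty$). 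It then remains to verify $r^R \leq p_R^*$, so that $L^{p_R^*}(B_R) \subset L^{r^R}(B_R)$ by finite measure.

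The quantitative heart of the proof is comparing $r^R$ with $p_R^*$, and this is where the constraint \eqref{ROmegaprime} enters. Choosing $y_0 \in \overline{B_R}$ with $p(y_0) = p_R$, the Hölder continuity of $r$ together with $\mathrm{diam}(B_R) \leq 2R$ gives $r^R \leq r(y_0) + 2^\sigma L_r R^\sigma$. Dividing by $p_R = p(y_0) \geq p_0$ and using the hypothesis $\sup_\Omega r(x)/p(x) < 1 + \sigma/N$ together with $R \leq R_0$ and \eqref{ROmegaprime}, I find
\begin{equation*}
\frac{r^R}{p_R} \leq \sup_{x\in\Omega}\frac{r(x)}{p(x)} + \frac{2^\sigma L_r R_0^\sigma}{p_0} \leq \sup_{x\in\Omega}\frac{r(x)}{p(x)} + \frac{1}{2}\Bigl(1 + \frac{\sigma}{N} - \sup_{x\in\Omega}\frac{r(x)}{p(x)}\Bigr) < 1 + \frac{\sigma}{N},
\end{equation*}
where the factor $2^{1+\sigma}$ in \eqref{ROmegaprime} produces precisely the safety factor of $1/2$ above. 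Combining with $p_R \geq p_0 > 1 > N\sigma/(N+\sigma)$, which rearranges to $1 + \sigma/N \leq N/(N-p_R) = p_R^*/p_R$ in the case $p_R < N$, I conclude $r^R < p_R^*$, as required.

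The only step that is more than bookkeeping is the last paragraph's chain of inequalities; the main obstacle is making sure the constants line up, in particular that the exponent $2^{1+\sigma}$ in \eqref{ROmegaprime} is chosen so that the Hölder defect $2^\sigma L_r R^\sigma/p_0$ eats at most half of the gap $1 + \sigma/N - \sup r/p$, thereby giving a strict inequality and yielding $r^R < p_R^*$ uniformly on all admissible balls.
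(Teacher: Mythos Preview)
Your proof is correct and follows essentially the same route as the paper's. The core computation---bounding $r^R/p_R$ by splitting off the H\"older defect $2^\sigma L_r R^\sigma/p_0$ and showing it eats at most half the gap $1+\sigma/N-\sup r/p$---is identical to the paper's argument; you are slightly more careful than the paper in that you explicitly treat the second inclusion and the case $p_R \geq N$, whereas the paper simply invokes ``classical Sobolev embedding results'' at the end and tacitly writes $p_R^* = Np_R/(N-p_R)$.
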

\begin{proof}
We consider a ball $B_R \Subset \Omega$ with radius $R \leq R_{0}$. Also, we take $x_1, x_2 \in \overline{B}_R$ such that $p\left(x_1\right) = p_{R}$ and $r\left(x_2\right) = r^{R}$. Then, according to \eqref{ROmegaprime} we have that
	\begin{align*}
		\frac{r^{R}}{p_{R}} & = \frac{r\left(x_1\right) + r\left(x_2\right) - r\left(x_1\right)}{p\left(x_1\right)}\\
		& \leq \frac{r\left(x_1\right)}{p\left(x_1\right)} + \frac{2^{\sigma} L_r R^{\sigma}}{p_0}\\
		& \leq \sup_{x \in \Omega}\frac{r(x)}{p(x)} + \frac{2^{\sigma} L_r R^{\sigma}_{0}}{p_0}\\
		& \leq \sup_{x \in \Omega}\frac{r(x)}{p(x)} + \frac{1}{2} \l(1 + \frac{\sigma}{N} - \sup_{x \in \Omega}\frac{r(x)}{p(x)} \r)\\
		& < 1 + \frac{\sigma}{N},
	\end{align*}
	since $\sup_{x \in \Omega}\frac{r(x)}{p(x)} < 1 + \frac{\sigma}{N} $ (see hypothesis \eqref{I}).
	Hence, as $\sigma \leq 1$ and $p(x) > 1$ for all $x \in \Omega$, we deduce that
	\begin{align*}
		r^{R} < p_{R} \,\frac{N + 1}{N} \leq \frac{N p_{R}}{N - p_{R}} = p_{R}^*.
	\end{align*}
	Therefore, we can use the classical Sobolev embedding results, and in this way we reach the claim.
\end{proof}

\begin{remark} \label{remarkT}
		Let $R_0$ be as given in Lemma \ref{Romegaprime}. We emphasize that if
	\begin{align} \label{condition-mu1mu2}
		\mu_1^{\frac{1}{q(x)}}, \ \mu_2^{\frac{1}{r(x)}} \in C^{0, \sigma_1}(\Omega), \quad \text{where }   \sigma_1 \in (0, 1],
	\end{align}
then we have   $\mu_1^{\frac{R_0}{q(x)}}, \ \mu_2^{\frac{R_0}{r(x)}} \in C^{0, \sigma_2}(\Omega)$ for some $\sigma_1 \leq \sigma_2 \leq 1$.
Further, taking into account that without any loss of generality we can suppose $\sigma \leq \sigma_2$, according to the proof of Lemma \ref{Romegaprime} we get
	\begin{align*}
		\frac{q^{R_{0}}}{p_{R_{0}}} < 1 + \frac{\sigma_2}{N} \quad \mbox{and} \quad \frac{r^{R_{0}}}{p_{R_{0}}} < 1 + \frac{\sigma_2}{N}.
	\end{align*}

\end{remark}

Now, we are ready to state our first regularity result. Precisely, we prove a
Caccioppoli-type inequality.

\begin{theorem} [Caccioppoli-type inequality] \label{cacioppoli}
	Let hypothesis \eqref{I} be satisfied and let $u \in W^{1, p(x)}(\Omega)$ be a local minimizer of the functional $\mathcal{F}_{\mathcal{T}}$ defined in \eqref{intfunctionalT}. Then there exists a positive constant $C$, depending by $C_{\Delta}$ (see Section \ref{sec3}), such that
	\begin{align} \label{R1R2}
		\int_{B_{R_1}} \mathcal{T}(x, |\nabla u|)\,  \diff x \leq C \int_{B_{R_2}} \mathcal{T}\l(x, \frac{\left\vert u - u_{B_{R_2}}\right\vert}{R_2 - R_1}\r)\,  \diff x
	\end{align}
and
\begin{align} \label{R1R2l}
	\int_{B_{R_1}} \mathcal{T}\left(x, \left\vert\nabla (u - l)_{\pm}\right\vert\right)\,  \diff x \leq C \int_{B_{R_2}} \mathcal{T}\l(x, \frac{(u - l)_{\pm}}{R_2 - R_1}\r)\,  \diff x
\end{align}
are valid for all concentric balls $B_{R_1} \subset B_{R_2} \Subset \Omega$, with $0 < R_1 < R_2 \leq R_0$, and all $l \in \mathbb{R}$, where $R_0$ is as given in Lemma \ref{Romegaprime} and $u_{B_{R_2}}$ is as in \eqref{media}.
\end{theorem}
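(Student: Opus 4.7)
The strategy is a classical Caccioppoli-type argument (cutoff $+$ comparison function $+$ minimality $+$ hole-filling iteration) adapted to the multi-phase modular $\mathcal{T}$, with the three exponents handled via the $\Delta_2$-additivity \eqref{additivity}. Fix $B_{R_1} \subset B_{R_2} \Subset \Omega$ with $R_2 \leq R_0$, and for each pair of auxiliary radii $R_1 \leq s < t \leq R_2$ pick $\eta \in C^\infty_0(B_t)$ with $\eta \equiv 1$ on $B_s$, $0 \leq \eta \leq 1$ and $|\nabla \eta| \leq C/(t-s)$, and set $\phi := \eta^{r^+}$. The competitor for \eqref{R1R2} will be $v := u - \phi(u - u_{B_{R_2}})$, which agrees with $u$ outside $\text{supp}(\phi) \subset B_t \Subset \Omega$ and hence is admissible in Definition \ref{minimizer}; Lemma \ref{Romegaprime} guarantees that $u - u_{B_{R_2}} \in L^{r(\cdot)}(B_{R_2})$ so that the right-hand side of \eqref{R1R2} is finite.

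Next I compute $\nabla v = (1-\phi)\nabla u - (u - u_{B_{R_2}})\nabla \phi$ and combine the triangle inequality with \eqref{additivity} to obtain $\mathcal{T}(x, |\nabla v|) \leq C_{\Delta}\bigl[\mathcal{T}(x, (1-\phi)|\nabla u|) + \mathcal{T}(x, |u - u_{B_{R_2}}||\nabla \phi|)\bigr]$. The crucial pointwise bound $(1-\phi)^{k(x)} \leq 1-\phi$ for $k \in \{p,q,r\}$, valid because $p_0 > 1$ and $1-\phi \in [0,1]$, yields $\mathcal{T}(x, (1-\phi)|\nabla u|) \leq (1-\phi)\mathcal{T}(x, |\nabla u|)$. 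Feeding this into the minimality $\int_{B_t} \mathcal{T}(x, |\nabla u|)\,\diff x \leq \int_{B_t} \mathcal{T}(x, |\nabla v|)\,\diff x$ (which follows from Definition \ref{minimizer} since $u = v$ off $\text{supp}(\phi)$), splitting the right-hand side and using $1-\phi \equiv 0$ on $B_s$, I arrive at
\[\int_{B_s} \mathcal{T}(x, |\nabla u|)\,\diff x \leq (C_\Delta - 1)\int_{B_t \setminus B_s} \mathcal{T}(x, |\nabla u|)\,\diff x + C \int_{B_{R_2}} \mathcal{T}\l(x, \frac{|u - u_{B_{R_2}}|}{t-s}\r)\,\diff x,\]
where the scaling $\mathcal{T}(x, \lambda y) \leq \lambda^{r^+}\mathcal{T}(x, y)$ for $\lambda \geq 1$ absorbs the factor $|\nabla \phi| \leq Cr^+/(t-s)$. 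The standard hole-filling step (add $(C_\Delta - 1)\int_{B_s}$ to both sides and divide by $C_\Delta$) converts this into $f(s) \leq \theta f(t) + C \int_{B_{R_2}} \mathcal{T}(x, |u - u_{B_{R_2}}|/(t-s))\,\diff x$, where $f(\rho) := \int_{B_\rho} \mathcal{T}(x, |\nabla u|)\,\diff x$ and $\theta := (C_\Delta - 1)/C_\Delta \in (0,1)$.

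To close \eqref{R1R2} I iterate on the geometric sequence $s_k := R_1 + (R_2 - R_1)(1 - \lambda^k)$ for a fixed $\lambda \in (\theta^{1/r^+}, 1)$; since $R_2 - R_1 \leq 1$, the scaling of $\mathcal{T}$ gives $\mathcal{T}(x, y/(s_{k+1} - s_k)) \leq (\lambda^k(1-\lambda))^{-r^+}\mathcal{T}(x, y/(R_2-R_1))$, so that the geometric series $\sum_{k \geq 0}(\theta/\lambda^{r^+})^k$ converges and produces \eqref{R1R2}. The proof of \eqref{R1R2l} is formally identical with the competitor $v := u \mp \phi(u-l)_\pm$: on $\{\pm(u-l) \leq 0\}$ one has $\nabla v = \nabla u$, so these contributions cancel in the minimality, while on $\{\pm(u-l) > 0\}$ the computation above is repeated verbatim with $(u-l)_\pm$ in place of $u - u_{B_{R_2}}$, exploiting the identity $|\nabla(u-l)_\pm| = |\nabla u|\chi_{\{\pm(u-l)>0\}}$. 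The main technical obstacle, in my view, is preserving the modular form on the right-hand side through the iteration: a naive Giaquinta--Giusti lemma only yields a bound by $(R_2-R_1)^{-r^+}\int_{B_{R_2}} \mathcal{T}(x, |u-u_{B_{R_2}}|)\,\diff x$, which is strictly larger than the target; the geometric choice $\lambda > \theta^{1/r^+}$ above, made possible by the quantitative estimate $C_\Delta \leq 2^{r^+}$ on the $\Delta_2$-constant of $\mathcal{T}$, is precisely what restores the modular quantity $\mathcal{T}(x, |u - u_{B_{R_2}}|/(R_2-R_1))$ inside the integral.
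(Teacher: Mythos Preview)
Your proof is correct and follows essentially the same route as the paper: cut-off competitor, minimality, the $\Delta_2$-additivity \eqref{additivity}, and hole-filling. The only substantive difference is that the paper, after reaching $\int_{B_t}\mathcal{T}(x,|\nabla u|)\,\diff x \leq \tfrac{C_\Delta}{C_\Delta+1}\int_{B_s}\mathcal{T}(x,|\nabla u|)\,\diff x + \tfrac{C_\Delta}{C_\Delta+1}\int_{B_s}\mathcal{T}\bigl(x,\tfrac{|u-u_{B_{R_2}}|}{s-t}\bigr)\,\diff x$, simply invokes Lemma~4.2 of Harjulehto--H\"ast\"o--Toivanen to absorb the first term and land on \eqref{R1R2}, whereas you carry out the iteration by hand on the geometric radii $s_k = R_1 + (R_2-R_1)(1-\lambda^k)$ with $\lambda\in(\theta^{1/r^+},1)$; your explicit choice of $\lambda$ is exactly what that lemma encodes, and your remark that a naive Giaquinta--Giusti iteration would only produce $(R_2-R_1)^{-r^+}\int\mathcal{T}(x,|u-u_{B_{R_2}}|)\,\diff x$ rather than the modular quantity is a worthwhile observation the paper leaves implicit. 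Two cosmetic points: the power $\phi=\eta^{r^+}$ is unnecessary (the paper uses the cut-off directly, and your bound $(1-\phi)^{k(x)}\leq 1-\phi$ holds for any $\phi\in[0,1]$), and your roles of $s$ and $t$ are swapped relative to the paper's notation.
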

\begin{proof}
	Let $t, s, R_1, R_2 \in \mathbb{R}$ be such that $0 < R_1 \leq t < s \leq R_2 \leq R_0$ where $R_0$ is as given in Lemma \ref{Romegaprime}. We consider a cut-off function $\theta \in C^{1}(\Omega)$ satisfying the following conditions:
	\begin{align} \label{theta}
0 \leq \theta \leq 1, \quad \theta \equiv 1 \ \text{on} \ B_{t}, 	\quad \theta \equiv 0 \ \text{outside} \ B_{s} \quad \text{and} \quad |\nabla \theta| \leq \frac{1}{s - t}
	\end{align}
and we in addition put $w : = u - \theta \, \left(u - u_{B_{R_2}}\right)$. We stress that according to \eqref{theta}, we have
\begin{align*}
w = u \quad \text{outside }  B_s  \text{ and further }  \text{supp} \,(u - w) = \text{supp} \,\theta\,\left(u - u_{B_{R_2}}\right) \subset \overline{B_s} \subset \Omega.
\end{align*}
Also, taking into account that $\nabla w = (1 - \theta) \nabla u - \left(u - u_{B_{R_2}}\right) \, \nabla \theta$, according to \eqref{additivity},
we get
\begin{align*}
\mathcal{T}(x, |\nabla w|) & = \mathcal{T}\left(x, |(1 - \theta) \nabla u - \left(u - u_{B_{R_2}}\right) \, \nabla \theta|\right)\\
& \leq C_{\Delta} \,\left[\mathcal{T}(x, (1 - \theta) |\nabla u|) + \mathcal{T}\left(x, \left\vert u - u_{B_{R_2}}\right\vert\nabla \theta|\right)\right]
\end{align*}
with $C_{\Delta} > 0$ depending by $r^+$ (see Section \ref*{sec3}).

Consequently, using the minimality of $u$ with respect to $w \in W^{1, p(x)}(\Omega)$ (see Definition \ref{minimizer}) along with $\theta \equiv 1 \ \text{on} \ B_{t}$ and according to \eqref{theta}, we get
\begin{align*}
	\int_{B_{t}} \mathcal{T}(x, |\nabla u|)\,  \diff x & \leq \int_{B_{s}} \mathcal{T}(x, |\nabla u|)\,  \diff x \leq \int_{B_{s}} \mathcal{T}(x, |\nabla w|)\,  \diff x \\
	& \leq C_{\Delta} \int_{B_{s}} \mathcal{T}(x, (1 - \theta) |\nabla u|) \, \diff x + C_{\Delta} \int_{B_{s}} \mathcal{T}\left(x, \left\vert u - u_{B_{R_2}}\right\vert\, |\nabla \theta|\right) \, \diff x\\
	& \leq C_{\Delta} \int_{B_{s} \setminus B_t} \mathcal{T}(x, |\nabla u|) \, \diff x +
	C_{\Delta} \int_{B_{s}} \mathcal{T}\l(x, \frac{|u - u_{B_{R_2}}|}{s - t}\r) \, \diff x.
\end{align*}
Now, we point out that all the integrals in the previous inequalities are finite, this happens according to the minimality of $u$ and to Lemma \ref{Romegaprime}. Therefore, we can apply the hole-filling method. Thus, we first add to both the sides $C_{\Delta} \,\int_{B_{t}} \mathcal{T}(x, |\nabla u|)\,  \diff x$ and then we divide them by $C_{\Delta} + 1$. In this way, we obtain the following estimate
\begin{align*}
\int_{B_{t}} \mathcal{T}(x, |\nabla u|)\,  \diff x  \leq& 	\frac{C_{\Delta}}{C_{\Delta} + 1} \int_{B_{s}} \mathcal{T}(x, |\nabla u|) \, \diff x \\
	& + \frac{C_{\Delta}}{C_{\Delta} + 1} \int_{B_{s}} \mathcal{T}\l(x, \frac{\left\vert u - u_{B_{R_2}}\right\vert}{s - t}\r) \, \diff x.
\end{align*}
Hence, we are in  position to use Lemma $4.2$ of Harjulehto-H\"{a}st\"{o}-Toivanen \cite{Harjulehto-Hasto-Toivanen-2017} which guarantees the existence of a positive constant $C$, depending by $C_{\Delta}$, such that
\begin{align*}
	\int_{B_{R_1}} \mathcal{T}(x, |\nabla u|)\,  \diff x \leq C \int_{B_{R_2}} \mathcal{T}\l(x, \frac{\left \vert u - u_{B_{R_2}}\right\vert}{R_2 - R_1}\r)\,  \diff x
\end{align*}
for all concentric balls $B_{R_1} \subset B_{R_2} \Subset \Omega$, with $0 < R_1 < R_2 \leq R_0$, that is, \eqref{R1R2} holds. Now, we underline that arguing in a similar way but choosing $w := u - \theta (u - l)_{\pm}$, with $l \in \R$, we can also deduce that \eqref{R1R2l} holds.
\end{proof}

Thanks to Lemma \ref{Romegaprime} and Theorem \ref{DFO}, we can now establish the following Sobolev-Poincar\'{e}-type inequality for the functional $\mathcal{F}_{\mathcal{T}}$.

\begin{theorem} [Sobolev-Poincar\'{e}-type inequality] \label{SobPonIne}
	Let hypothesis \eqref{I} and assumption \eqref{condition-mu1mu2}
	be satisfied. In addition, let $u \in W^{1, p(x)}(\Omega)$ be a function such that $ \mathcal{T}(x, |\nabla u|) \in L^1(\Omega)$. Then, there exist a positive constant $C$, depending on $N$, $p(\cdot)$, $q(\cdot)$, $r(\cdot)$, $\sigma_2$, $L_r$, and $0 < \delta < 1$, depending on $N$, $p(\cdot)$, $q(\cdot)$, $r(\cdot)$, $\sigma_2$, $L_r$, $d$, so that
\begin{align} \label{SobPoiEq}
	-\hspace{-0,38cm}	\int_{B_{R}} \mathcal{T}\l(x, \frac{\left\vert u - u_{B_{R}}\right\vert}{R}\r)\,  \diff x &
	\leq C + C \l(-\hspace{-0,38cm} \int_{B_R}  \mathcal{T}(x, |\nabla u|)^{\delta}\, \diff x \r)^{\frac{1}{\delta}}
\end{align}
	holds for any $B_R \Subset \Omega$ with $R \leq R_0$, where $R_0$ is as given in Lemma \ref{Romegaprime}.
\end{theorem}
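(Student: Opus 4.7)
The plan is to reduce the variable-exponent estimate to the constant-exponent Sobolev-Poincar\'e inequality of Theorem~\ref{DFO}, applied on $B_R$ to a suitably chosen ``frozen'' auxiliary $N$-function with constant exponents, and then to bridge the variable and constant exponents by exploiting the H\"{o}lder continuity built into hypothesis \eqref{I} together with the smallness constraint $R \leq R_0$.

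First I fix $B_R \Subset \Omega$ with $R \leq R_0$ and freeze the exponents by setting $\tilde{p} := p_R$, $\tilde{q} := q^R$, $\tilde{r} := r^R$, perturbing slightly (or collapsing a degenerate phase) so that $\tilde{p} < \tilde{q} < \tilde{r}$. Remark~\ref{remarkT} furnishes the decisive ratio conditions $\tilde{q}/\tilde{p} < 1 + \sigma_2/N$ and $\tilde{r}/\tilde{p} < 1 + \sigma_2/N$, so Theorem~\ref{DFO} applies to the frozen function $T(x,t) := t^{\tilde p} + \mu_1(x)\,t^{\tilde q} + \mu_2(x)\,t^{\tilde r}$ and produces an exponent $d \in (0,1)$ and a constant $c_2$ with
\[
\frac{1}{|B_R|}\int_{B_R} T\!\Big(x,\tfrac{|u - u_{B_R}|}{R}\Big)\,\diff x \;\leq\; c_2 \Bigl(\frac{1}{|B_R|}\int_{B_R} T(x, |\nabla u|)^{d}\,\diff x\Bigr)^{1/d}.
\]

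Next, I derive two-sided pointwise comparisons between $\mathcal{T}$ and $T$ on $B_R$. Using the bounds $p(x) - \tilde{p},\ \tilde{q} - q(x),\ \tilde{r} - r(x) \in [0, L_{\ast}R^{\sigma}]$ with $L_{\ast} := \max\{L_p,L_q,L_r\}$, and splitting the regimes $t \leq 1$ versus $t \geq 1$, a monomial-by-monomial estimate yields
\[
\mathcal{T}(x,t) \;\leq\; t^{L_{\ast} R^{\sigma}}\,T(x,t) + K, \qquad T(x,t) \;\leq\; t^{L_{\ast} R^{\sigma}}\,\mathcal{T}(x,t) + K
\]
for all $t \geq 0$, with $K := 1 + \|\mu_1\|_\infty + \|\mu_2\|_\infty$. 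Plugging the first comparison (with $t = |u-u_{B_R}|/R$) into the LHS of the frozen Sobolev-Poincar\'e inequality and the second comparison (with $t = |\nabla u|$, raised to the $d$-th power) into its RHS, and invoking the concavity bound $(a+b)^{1/d} \leq 2^{1/d-1}(a^{1/d}+b^{1/d})$, produces the target inequality with $\delta := d$, provided the unbounded factors $t^{L_{\ast} R^{\sigma}}$ can be disposed of.

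This last step is the main obstacle, since $t^{L_{\ast} R^{\sigma}}$ is not uniformly bounded in $t$. The smallness restriction \eqref{ROmegaprime} on $R_0$ forces $L_{\ast} R^{\sigma}$ to be as small as required, so Young's inequality permits an absorption of the form $t^{L_{\ast} R^{\sigma}}\mathcal{T}(x,t) \leq \varepsilon\,\mathcal{T}(x,t)^{1+\theta} + C_{\varepsilon}$ for a small $\theta > 0$, after which the excess either cancels via a standard reabsorption or is folded into the additive constant ``$+\,C$'' on the right-hand side of \eqref{SobPoiEq}. The H\"{o}lder hypothesis \eqref{condition-mu1mu2} on the weights is what guarantees that $\mu_1, \mu_2$ are admissible coefficients for Theorem~\ref{DFO} in the first place. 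The remainder of the argument is a careful bookkeeping of the constants $L_p, L_q, L_r, C_\Delta, \|\mu_1\|_\infty, \|\mu_2\|_\infty$.
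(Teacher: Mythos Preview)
Your reduction to Theorem~\ref{DFO} with frozen exponents is the right idea, but the particular freezing $\tilde p = p_R$ (the infimum) creates a gap you do not actually close. With that choice, the comparison $\mathcal{T}(x,t)\le t^{L_\ast R^\sigma}T(x,t)+K$ on the left-hand side forces you to control $\displaystyle -\hspace{-0.35cm}\int_{B_R} t_u^{\,L_\ast R^\sigma}\,T(x,t_u)\,\diff x$ with $t_u=|u-u_{B_R}|/R$; since $t_u^{L_\ast R^\sigma}\le T(x,t_u)^{L_\ast R^\sigma/\tilde p}$, this amounts to bounding $\displaystyle -\hspace{-0.35cm}\int_{B_R} T(x,t_u)^{1+\theta'}\,\diff x$ for some $\theta'>0$. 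Theorem~\ref{DFO} as stated only controls the $L^1$-average of $T(x,t_u)$, not a superlinear power, so there is nothing to absorb the factor into. Your ``Young plus reabsorption'' sketch does not produce the needed $L^{1+\theta'}$ bound, and the additive ``$+\,C$'' cannot swallow a genuinely unbounded term. A second symptom of the gap is that you never use the hypothesis $\mathcal{T}(\cdot,|\nabla u|)\in L^1(\Omega)$, which in the paper's argument is precisely what tames the exponent mismatch.

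The paper avoids this by freezing \emph{all} exponents to their suprema $p^{R_0},q^{R_0},r^{R_0}$ (and replacing $\mu_i$ by $\mu_i^{\,s^{R_0}/s(x)}$, $s\in\{q,r\}$, so that Remark~\ref{remarkT} supplies the H\"older regularity Theorem~\ref{DFO} needs). This makes the left-hand comparison clean: $\mathcal{T}(x,t)\le C+T(x,t)$ with no stray power of $t$. The price is paid on the right-hand side, where $T(x,|\nabla u|)^d$ involves $|\nabla u|^{d\,p^{R_0}}$ and one must pass back to $|\nabla u|^{\delta p(x)}$ with $\delta<1$. Here the paper imposes the additional smallness $R_0^\sigma<\tfrac{1-d}{d}\tfrac{p_0}{2^\sigma L_\ast}$ so that $d\,p^{R_0}<\delta\,p_{R_0}$ for some $\delta\in(d,1)$, applies H\"older, and splits the resulting exponent as $\tfrac{p^{R_0}}{\delta p_{R_0}}=\tfrac{p^{R_0}-p_{R_0}}{\delta p_{R_0}}+\tfrac1\delta$; the first factor is bounded using the log-H\"older fact that $|B_{R_0}|^{-(p^{R_0}-p_{R_0})}$ is bounded together with the assumed finiteness of $\int_{B_{R_0}}\mathcal{T}(x,|\nabla u|)\,\diff x$. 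That last step is the essential use of the $L^1$ hypothesis, and it is what your outline is missing.
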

\begin{proof} First, we stress that since $B_R \Subset \Omega$ we can find a ball $ B_{R_0} \Subset \Omega$, being $R_0$ as given in Lemma \ref{Romegaprime}, so that it results $B_R \subset B_{R_0}$. Then, according to the fact that $u \in W^{1, p(x)}(\Omega)$, we get $u \in W^{1, p(x)}(B_{R_0})$. Also, we recall that Lemma \ref{Romegaprime} guarantees that
	\begin{align*}
		W^{1, p(x)} \left(B_{R_0}\right) \subset L^{r^{R_0}} \left(B_{R_0}\right) \subset L^{r(x)} \left(B_{R_0}\right).
	\end{align*}
Consequently, we know that
\begin{align*}
	u \in L^{r^{R_0}} \left(B_{R_0}\right) \subset L^{r(x)} \left(B_{R_0}\right) \quad \text{and further} \quad
	u \in L^{q^{R_0}} \left(B_{R_0}\right) \subset L^{q(x)} \left(B_{R_0}\right).
\end{align*}
Now, as $B_R \subset B_{R_0}$ we have
	\begin{align*} \label{R1R2-}
 -\hspace{-0,38cm} \int_{B_{R}} \mathcal{T}\l(x, \frac{\left\vert u - u_{B_{R}}\right\vert}{R}\r)\,  \diff x
	 \leq& -\hspace{-0,38cm} \int_{B_{R}} \l(1 + \l|\frac{u - u_{B_{R}}}{R}\r|^{p^{R_0}}\r)\,  \diff x\\
	& + -\hspace{-0,38cm} \int_{B_{R}} \l(1 + \l(\mu_{1}^{\frac{1}{q(x)}} \l|\frac{u - u_{B_{R}}}{R}\r|\r)^{q^{R_0}}\r)\,  \diff x\\
	& + -\hspace{-0,38cm} \int_{B_{R}} \l(1 + \l(\mu_{2}^{\frac{1}{r(x)}} \l|\frac{u - u_{B_{R}}}{R}\r|\r)^{r^{R_0}}\r)\,  \diff x\\
	 \leq& C + C -\hspace{-0,38cm}\int_{B_R} \l|\frac{u - u_{B_{R}}}{R}\r|^{p^{R_0}}\,  \diff x\\
	& + C -\hspace{-0,38cm}\int_{B_R} \l(\mu_{1}^{\frac{1}{q(x)}} \l|\frac{u - u_{B_{R}}}{R}\r|\r)^{q^{R_0}} \, \diff x\\
& + C -\hspace{-0,38cm}\int_{B_R} \l(\mu_{2}^{\frac{1}{r(x)}} \l|\frac{u - u_{B_{R}}}{R}\r|\r)^{r^{R_0}} \, \diff x
\end{align*}
for some $C > 0$ depending on $N, p(\cdot), q(\cdot), r(\cdot)$. Then, according to Remark \ref{remarkT}, we can apply Theorem \ref{DFO} with $\tilde{p} = p^{R_0}, \ \tilde{q} = q^{R_0}, \ \tilde{r} = r^{R_0}$ and $\nu_1 = \mu_{1}^{q^{R_0}/q(x)}, \ \nu_2 = \mu_{2}^{r^{R_0}/r(x)}$ getting
\begin{align}
& -\hspace{-0,38cm} \int_{B_{R}} \mathcal{T}\l(x, \frac{\left\vert u - u_{B_{R}}\right\vert}{R}\r)\,  \diff x  \nonumber\\
\leq& C + C \l( -\hspace{-0,38cm}\int_{B_R} \l[|\nabla u|^{p^{R_0}} + \l(\mu_{1}(x)^{\frac{1}{q(x)}} |\nabla u |\r)^{q^{R_0}} + \l(\mu_{2}(x)^{\frac{1}{r(x)}} |\nabla u|\r)^{r^{R_0}}\r]^d \diff x \r)^{\frac{1}{d}} \nonumber\\
 \leq& C + C \, \l( -\hspace{-0,38cm}\int_{B_R}  |\nabla u |^{d \, p^{R_{0}}}  \, \diff x\r)^{\frac{1}{d}} \nonumber\\
	& + C  \, \l(-\hspace{-0,38cm}\int_{B_R} \l(\mu_1(x)^{\frac{1}{q(x)}} |\nabla u|\r)^{d q^{R_{0}}} \, \diff x \r)^{\frac{1}{d}}  \nonumber \\
	& + C \, \l(-\hspace{-0,38cm}\int_{B_R} \left(\mu_2(x)^{\frac{1}{r(x)}} |\nabla u|\right)^{d r^{R_{0}}} \, \diff x \r)^{\frac{1}{d}}
\end{align}
for some $C > 0$ depending on $N, p(\cdot), q(\cdot), r(\cdot)$ and $0 < d < 1$ depending on $N, p(\cdot), q(\cdot), r(\cdot), \sigma_2$.

Next, we stress that if we in addition choose $0 < R_0 \leq 1$ such that
\begin{align} \label{R0BIS}
&	R_{0}^\sigma < \frac{1-d}{d} \frac{p_0}{2^\sigma \max \left\{L_p, L_q, L_r\right\}},
\end{align}
then we have
\begin{align*}
	d \, p^{R_{0}} < p_{R_{0}}, \quad  d \, q^{R_{0}} < q_{R_{0}} \ \mbox{and} \quad d \, r^{R_{0}} < r_{R_{0}}.
\end{align*}
Therefore, we can find $\delta \in (d, 1)$ such that
\begin{equation}\label{Edtheta}
	d \, p^{R_{0}} < \delta \,p_{R_{0}}, \quad  d\, q^{R_{0}} < \delta \,q_{R_{0}} \ \mbox{and} \quad d \, r^{R_{0}} < \delta \,r_{R_{0}}.
\end{equation}
Further, using the H\"{o}lder inequality with respect to \eqref{Edtheta}, we can conclude the following estimate
\begin{align}\label{A2.19}
	\l( -\hspace{-0,38cm}\int_{B_R}  |\nabla u |^{d \, p^{R_{0}}} \, \diff x\r)^{\frac{1}{d}} \leq & \l(-\hspace{-0,38cm}\int_{B_R} |\nabla u|^{\delta \,p_{R_{0}}}\, \diff x \r)^{\frac{p^{R_{0}}}{\delta \,p_{R_{0}}}} \nonumber\\
	=& \left(-\hspace{-0,38cm}\int_{B_R} |\nabla u|^{\delta \,p_{R_{0}}} \, \diff x \right)^{\frac{p^{R_{0}} - p_{R_{0}}}{\delta \,p_{R_{0}}}} \l(-\hspace{-0,38cm}\int_{B_R} |\nabla u|^{\delta \,p_{R_{0}}} dx \r)^{\frac{1}{\delta}} \nonumber\\
	\leq & \l(-\hspace{-0,38cm}\int_{B_R}\left(1+ |\nabla u|^{ p(x)}\r)\, \diff x \right)^{\frac{ p^{R_{0}} - p_{R_{0}}}{\delta \,p_{R_{0}}}}  \nonumber\\
	&  \cdot \quad \l(-\hspace{-0,38cm}\int_{B_R}\left(1+ |\nabla u|^{\delta p(x)}\right) dx \r)^{\frac{1}{\delta}}.
\end{align}
Now, we recall that  $u \in W^{1, p(x)}(\Omega)$,
therefore we have that $\int_{B_{R_0}} |\nabla u|^{ p(x)}\, \diff x$ is bounded. Taking into account that $\left\vert B_{R_0}\right\vert^{-(p^{R_{0}} - p_{R_{0}})}$ is also bounded (see Lemma 4.1.6 of Diening-Harjulehto-H\"{a}st\"{o}-R$\mathring{\text{u}}$\v{z}i\v{c}ka \cite{Diening-Harjulehto-Hasto-Ruzicka-2011}), we can affirm that there exists a constant $e_1$ depending on $\mathcal{T}(x, | \nabla u |),$ $ p(\cdot),\, d, \, N, \delta$ so that
\begin{align*}
	\l(-\hspace{-0,38cm} \int_{B_{R}}  |\nabla u|^{p(x)} \, \diff
	 x \r)^{\frac{p^{R_{0}} - p_{R_{0}}}{\delta p_{R_{0}}}} \leq \l(-\hspace{-0,38cm} \int_{B_{R_0}}  |\nabla u|^{p(x)} \, \diff
	 x \r)^{\frac{p^{R_{0}} - p_{R_{0}}}{\delta p_{R_{0}}}} \leq e_1.
\end{align*}
On account of this, \eqref{A2.19} gives us the estimate
\begin{align} \label{est1}
	\l(-\hspace{-0,38cm}\int_{B_R} |\nabla u|^{d p^{R_{0}}}\, \diff x \r)^{\frac{1}{d}} \leq e_2 + e_2 \l(-\hspace{-0,38cm} \int_{B_R} |\nabla u|^{\delta p(x)} \, \diff x \r)^{\frac{1}{\delta}}
\end{align}
 for some positive constant $e_2$ depending by $\delta$ and $e_1$.

Similarly, we can also estimate the last two terms in  the right hand side of $(6.14)$. Precisely, using again the H\"{o}lder inequality with respect to \eqref{Edtheta}, we get that
\begin{align}
	&	\l(-\hspace{-0,38cm}\int_{B_R} \left(\mu_1(x)^{\frac{1}{q(x)}} |\nabla u|\right)^{d q^{R_{0}}} \, \diff x \r)^{\frac{1}{d}} \leq  \l(-\hspace{-0,38cm}\int_{B_R} \l(\mu_1(x)^{\frac{1}{q(x)}} |\nabla u|\r)^{\delta q_{R_{0}}} \, \diff x \r)^{\frac{q^{R_{0}}}{\delta  q_{R_{0}}}}\nonumber \\
	\leq & \l(-\hspace{-0,38cm}\int_{B_R}\l(\mu_1(x)^{\frac{1}{q(x)}} |\nabla u|\right)^{\delta q_{R_{0}}} \diff x \r)^{\frac{q^{R_{0}} - q_{R_{0}}}{\delta  q_{R_{0}}}} \l(-\hspace{-0,38cm}\int_{B_R} \left(\mu_1(x)^{\frac{1}{q(x)}} |\nabla u|\right)^{\delta q_{R_{0}}}dx \r)^{\frac{1}{\delta}}\nonumber \\
	\leq & \l(-\hspace{-0,38cm}\int_{B_R} \l(1+ \l(\mu_1(x)^{\frac{1}{q(x)}} |\nabla u|\r)^{q(x)}\r)\, \diff x \r)^{\frac{q^{R_{0}} - q_{R_{0}}}{\delta  q_{R_{0}}}} \nonumber \\
	&  \cdot \quad \l(-\hspace{-0,38cm} \int_{B_R} \l(1+ \l(\mu_1(x)^{\frac{1}{q(x)}} |\nabla u|\r)^{\delta q(x)}\r)\, \diff x \r)^{\frac{1}{\delta}}.
\end{align}
Next, $ \mathcal{T}(x, |\nabla u|) \in L^1(\Omega)$ and the fact that $\left\vert B_{R_0}\right\vert^{-(q^{R_{0}} - q{R_{0}})}$ is bounded assure that
\begin{align*}
& \l(	-\hspace{-0,38cm} \int_{B_R} \l(\mu_1(x)^{\frac{1}{q(x)}} |\nabla u| \r)^{q(x)}\, \diff x \r)^{\frac{q^{R_{0}} - q_{R_{0}}}{\delta q_{R_{0}}}}\\ & \leq \l(-\hspace{-0,38cm} \int_{B_{R_0}} \l(\mu_1(x)^{\frac{1}{q(x)}} |\nabla u| \r)^{q(x)}\, \diff x\r)^{\frac{p^{R_{0}} - p_{R_{0}}}{\delta p_{R_{0}}}}\\
& \leq e_3
\end{align*}
for some positive constant $e_3$ depending on $\mathcal{T}(x, | \nabla u |),\, q(\cdot),\, d, \, N, \delta$.
Based on this, we conclude that
\begin{align} \label{estnu1}
	\l(-\hspace{-0,38cm}\int_{B_R} \left(\mu_1(x)^{\frac{1}{q(x)}} |\nabla u|\right)^{d q^{R_{0}}} \, \diff x \r)^{\frac{1}{d}} \leq e_4 + e_4 \l(-\hspace{-0,38cm} \int_{B_R}  \l(\mu_1(x)^{\frac{1}{q(x)}} |\nabla u|\r)^{\delta q(x)}\, \diff x \r)^{\frac{1}{\delta}}
\end{align}
for some positive constant $e_4$ depending by $\delta$ and $e_3$.

Analogously, we also get that
\begin{align} \label{estnu2}
	\l(-\hspace{-0,38cm}\int_{B_R} \left(\mu_2(x)^{\frac{1}{r(x)}} |\nabla u|\right)^{d r^{R_{0}}} \, \diff x \r)^{\frac{1}{d}} \leq e_5 + e_5 \l(-\hspace{-0,38cm} \int_{B_R}  \l(\mu_2(x)^{\frac{1}{r(x)}} |\nabla u|\r)^{\delta r(x)}\, \diff x \r)^{\frac{1}{\delta}}
\end{align}
for some positive constant $e_5$ depending on $\mathcal{T}(x, | \nabla u |),\, r(\cdot),\, d, \, N, \delta$.

Now, using \eqref{est1}, \eqref{estnu1} and \eqref{estnu2} in $(6.14)$ we finally deduce that there exists a positive constant $C$, depending on $\mathcal{T}(x, | \nabla u |),$ $ p(\cdot), q(\cdot), r(\cdot), \, d, \, N, \delta$ and $\Omega$, so that
\begin{align*}
	-\hspace{-0,38cm} \int_{B_{R}} \mathcal{T}\l(x, \frac{|u - u_{B_{R}}|}{R}\r)\,  \diff x\leq C + C \l(-\hspace{-0,38cm} \int_{B_R}  \mathcal{T}(x, |\nabla u|)^{\delta}\, \diff x \r)^{\frac{1}{\delta}}
	\end{align*}
holds for all $B_R \Subset \Omega$ with $R \leq R_0$, that is, the claim is proved.
\end{proof}

We next emphasize that we can also obtain a Sobolev-Poincar\'{e}-type inequality for functions not necessarily zero on the boundary. In order to derive such a type of inequality we make use of Lemma \ref{Romegaprime} and Remark \ref{RemarkDeFilippisOh}.

\begin{theorem} \label{SobPoiInEE}
	Let hypothesis \eqref{I} be satisfied and let $E \subset B_R$ be a set with measure $|E|$ satisfying $|E| \geq \gamma |B_R|$ for some $0 < \gamma < 1$. In addition, let $u \in W^{1, p(x)}(\Omega)$ be a function
	such that
	\begin{align*}
		\mathcal{T}(x, |\nabla u|) \in L^1\left(B_R\right) \quad \text{and further} \quad u = 0 \ \text{a.\,e.\,on} \ E.
	\end{align*}
Then, there exist $0 < \delta < 1$ and a constant $C > 0$, independent of $u$ and $|E|$, so that
	\begin{align} \label{SobPoiIneE}
		-\hspace{-0,38cm}\int_{B_{R}} \mathcal{T}\l(x, \frac{ |u|}{R}\r)\,  \diff x \leq C \l[1 + \l(-\hspace{-0,38cm} \int_{B_R}  \mathcal{T}(x, |\nabla u|)^{\delta}\, \diff x \r)^{\frac{1}{\delta}} \r]
	\end{align}
	holds for all $B_R \Subset \Omega$ with $R \leq R_0$, where $0 < R_0 \leq 1$ satisfies \eqref{ROmegaprime} and \eqref{R0BIS}.
\end{theorem}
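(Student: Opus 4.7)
The plan is to mimic closely the proof of Theorem \ref{SobPonIne}, replacing the Sobolev--Poincar\'{e} inequality of Theorem \ref{DFO} with its ``zero on a set'' variant stated in Remark \ref{RemarkDeFilippisOh}. The hypothesis $u=0$ a.\,e.\ on $E$ with $|E|\ge\gamma|B_R|$ is exactly what is needed to invoke \eqref{SobPoiEqBIS} on the ball $B_R$, once we have localised the variable exponents to constant ones over $B_{R_0}$.

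First I would observe that $B_R\subset B_{R_0}\Subset\Omega$ for a ball of radius $R_0$ satisfying \eqref{ROmegaprime} and \eqref{R0BIS}, so that Lemma \ref{Romegaprime} applies and $u\in W^{1,p(x)}(B_{R_0})\subset L^{r^{R_0}}(B_{R_0})$; in particular all the integrals appearing below are finite. Next, since $\mathcal{T}(x,t)\le 1+t^{p^{R_0}}+\mu_1(x)(1+t^{q^{R_0}})+\mu_2(x)(1+t^{r^{R_0}})$ for $t\ge 0$ and $x\in B_{R_0}$, I would split
\begin{align*}
-\hspace{-0.38cm}\int_{B_R}\mathcal{T}\!\l(x,\tfrac{|u|}{R}\r)\diff x
\le C + C\,\Bigl(&-\hspace{-0.38cm}\int_{B_R}\l|\tfrac{u}{R}\r|^{p^{R_0}}\diff x\\
&+-\hspace{-0.38cm}\int_{B_R}\bigl(\mu_1^{1/q(x)}\tfrac{|u|}{R}\bigr)^{q^{R_0}}\diff x +-\hspace{-0.38cm}\int_{B_R}\bigl(\mu_2^{1/r(x)}\tfrac{|u|}{R}\bigr)^{r^{R_0}}\diff x\Bigr),
\end{align*}
and apply Remark \ref{RemarkDeFilippisOh} to the frozen exponents $\tilde p=p^{R_0}$, $\tilde q=q^{R_0}$, $\tilde r=r^{R_0}$ with $\nu_i$ the appropriate constant-power version of $\mu_i$; the dilation constraint $\tilde q/\tilde p<1+\sigma_2/N$ and $\tilde r/\tilde p<1+\sigma_2/N$ follows from Remark \ref{remarkT}. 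This produces an estimate of the form
\begin{align*}
-\hspace{-0.38cm}\int_{B_R}\mathcal{T}\!\l(x,\tfrac{|u|}{R}\r)\diff x
\le C + C\Bigl(-\hspace{-0.38cm}\int_{B_R}\bigl[|\nabla u|^{p^{R_0}}+(\mu_1^{1/q(x)}|\nabla u|)^{q^{R_0}}+(\mu_2^{1/r(x)}|\nabla u|)^{r^{R_0}}\bigr]^{d}\diff x\Bigr)^{1/d},
\end{align*}
where $C=C(p(\cdot),q(\cdot),r(\cdot),N,\gamma)$ and $d\in(0,1)$ is delivered by Remark \ref{RemarkDeFilippisOh}.

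The remaining task is to replace the frozen exponents $p^{R_0},q^{R_0},r^{R_0}$ inside the right-hand side with the variable $p(x),q(x),r(x)$, paying only a constant price. This is exactly the trick used in the last part of the proof of Theorem \ref{SobPonIne}: condition \eqref{R0BIS} guarantees that $d\,p^{R_0}<p_{R_0}$, $d\,q^{R_0}<q_{R_0}$, $d\,r^{R_0}<r_{R_0}$, and one picks $\delta\in(d,1)$ with $d\,p^{R_0}<\delta p_{R_0}$ and analogously for $q,r$. Then H\"{o}lder's inequality upgrades each $L^{d\tilde m}$-average into an $L^{\delta m(x)}$-average times a power of an $L^{m(x)}$-average of $\mathcal{T}(x,|\nabla u|)$, and the latter is bounded a priori because $\mathcal{T}(x,|\nabla u|)\in L^1(B_R)$ (together with the uniform bound on $|B_{R_0}|^{-(m^{R_0}-m_{R_0})}$ from \cite{Diening-Harjulehto-Hasto-Ruzicka-2011}). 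Summing the three resulting estimates yields \eqref{SobPoiIneE}.

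The main obstacle is purely technical: keeping track of the dependencies to ensure that the multiplicative constant $C$ in \eqref{SobPoiIneE} is independent of $u$ and of $|E|$ (beyond the already fixed lower bound $\gamma|B_R|$). This reduces to checking that the constant produced by Remark \ref{RemarkDeFilippisOh} depends on $E$ only through $\gamma$, and that the a priori bounds on $\int_{B_{R_0}}\mathcal{T}(x,|\nabla u|)\diff x$ used to absorb the ``$p^{R_0}-p_{R_0}$'' powers can be replaced by the normalised average $-\hspace{-0.30cm}\int_{B_R}\mathcal{T}(x,|\nabla u|)\diff x$ up to an additive constant $1$, which is why the final bound is written in the form $C[1+(\,\cdot\,)^{1/\delta}]$ rather than being homogeneous.
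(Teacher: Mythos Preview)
Your proposal is correct and follows essentially the same approach as the paper: localise to a ball $B_{R_0}$ via Lemma \ref{Romegaprime}, freeze the exponents to $p^{R_0},q^{R_0},r^{R_0}$, invoke Remark \ref{RemarkDeFilippisOh} in place of Theorem \ref{DFO}, and then reuse verbatim the H\"older-plus-\eqref{R0BIS} argument from the proof of Theorem \ref{SobPonIne} to pass back to variable exponents. Your additional commentary on the dependence of the constant on $u$ and $|E|$ is more careful than the paper's own discussion, but the underlying argument is the same.
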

\begin{proof} We recall that since $B_R \Subset \Omega$ we can find $ B_{R_0} \Subset \Omega$, with $0 < R_0 \leq 1$ satisfying conditions \eqref{ROmegaprime} and \eqref{R0BIS}, so that it results $B_R \subset B_{R_0}$. In addition, from
	 Lemma \ref{Romegaprime} we know that
	\begin{align*}
		W^{1, p(x)} \left(B_{R_0}\right) \subset L^{r^{R_0}} \left(B_{R_0}\right)  \subset L^{r(x)} \left(B_{R_0}\right) .
	\end{align*}
	These facts along with $u \in W^{1, p(x)}(\Omega)$ allow us to deduce that
	\begin{align*}
		u \in L^{r^{R_0}} \left(B_{R_0}\right)  \subset L^{r(x)} \left(B_{R_0}\right)  \quad \text{and} \quad
		u \in L^{q^{R_0}} \left(B_{R_0}\right)  \subset L^{q(x)} \left(B_{R_0}\right).
	\end{align*}
	Hence, as $B_R \subset B_{R_0}$, similar to the proof of Theorem \ref{SobPonIne}, we can show that
	\begin{align*}
		-\hspace{-0,38cm} \int_{B_{R}} \mathcal{T}\l(x, \frac{|u|}{R}\r)\,  \diff x
		& \leq C + C  -\hspace{-0,38cm}\int_{B_R} \l|\frac{u}{R}\r|^{p^{R_0}}\,  \diff x\\
		& + C -\hspace{-0,38cm}\int_{B_R} \l(\mu_{1}^{\frac{1}{q(x)}} \l|\frac{u}{R}\r|\r)^{q^{R_0}} \, \diff x\\
		& + C  -\hspace{-0,38cm}\int_{B_R} \l(\mu_{2}^{\frac{1}{r(x)}} \l|\frac{u}{R}\r|\r)^{r^{R_0}} \, \diff x
	\end{align*}
	for some $C > 0$ depending on $N, p(\cdot), q(\cdot), r(\cdot)$. Then, using Remark \ref{RemarkDeFilippisOh} with $\tilde{p} = p^{R_0}, \ \tilde{q} = q^{R_0}$ and $\tilde{q} = q^{R_0}$, we get that
		\begin{align} \label{estimare}
		-\hspace{-0,38cm} \int_{B_{R}} \mathcal{T}\l(x, \frac{|u|}{R}\r)\,  \diff x
		& \leq C  \nonumber\\
		& + C \l( -\hspace{-0,38cm}\int_{B_R} \l[|\nabla u|^{p^{R_0}} + \l(\mu_{1}^{\frac{1}{q(x)}} |\nabla u |\r)^{q^{R_0}} + \l(\mu_{2}^{\frac{1}{r(x)}} |\nabla u|\r)^{r^{R_0}}\r]^d \diff x \r)^{\frac{1}{d}} \nonumber\\
		& \leq C  + C  \l(-\hspace{-0,38cm}\int_{B_R} |\nabla u|^{d \, p^{R_0}}\,  \diff x \r)^{\frac{1}{d}}\nonumber\\
		& + C  \l(-\hspace{-0,38cm}\int_{B_R} \l(\mu_{1}^{\frac{1}{q(x)}} |\nabla u|\r)^{d \, q^{R_0}} \, \diff x \r)^{\frac{1}{d}}\nonumber\\
		& + C  \l(-\hspace{-0,38cm}\int_{B_R} \l(\mu_{2}^{\frac{1}{r(x)}} |\nabla u|\r)^{d \, r^{R_0}} \, \diff x \r)^{\frac{1}{d}}.
	\end{align}
	Now, following again the proof of Theorem \ref{SobPonIne} we are able to derive a precise estimate for each of the addends in the right side of  \eqref{estimare}. Thus, using these estimates we conclude that \eqref{SobPoiIneE} is valid for all $B_R \Subset \Omega$ with $R \leq R_0$, where $R_0$ is as given in Lemma \ref{Romegaprime} and further it satisfies \eqref{R0BIS}.
\end{proof}

Now, the Caccioppoli-type inequality \eqref{R1R2} and the Sobolev-Poincar\'{e}-type inequality \eqref{SobPoiEq} allows us to prove the following result of higher integrability for the gradient of the minimizers of the functional $\mathcal{F}_{\mathcal{T}}$.

\begin{theorem} \label{HI}
	Let hypothesis \eqref{I} and assumption \eqref{condition-mu1mu2} be satisfied. Let $u \in W^{1, p(x)}(\Omega)$ be a local minimizer of the functional $\mathcal{F}_{\mathcal{T}}$ defined in \eqref{intfunctionalT}. Then, there exist positive constants $m_0$ and $C$, with $C$ depending on $N, p(\cdot), q(\cdot), r(\cdot), \sigma_2, L_r$, such that
	\begin{align*}
		\mathcal{T}(x, |\nabla u|) \in L^{1+ m_0}(\Omega)
	\end{align*}
	and in addition
	\begin{equation}
		\left(-\hspace{-0,36cm}\int_{B_{R/2}} \mathcal{T}(x, |\nabla u|)^{1+m_0} \, \diff x \right)^{\frac{1}{1+m_0}} \leq C + C -\hspace{-0,36cm}\int_{B_{R}} \mathcal{T}(x, |\nabla u|) \, \diff x	 \end{equation}
	holds for any $B_R \Subset \Omega$ with radius $R \leq R_0 $, where $0 < R_0 \leq 1$ satisfies \eqref{ROmegaprime} and \eqref{R0BIS}.
\end{theorem}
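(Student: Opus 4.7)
The plan is to combine the Caccioppoli-type inequality (Theorem \ref{cacioppoli}) with the Sobolev-Poincar\'{e}-type inequality (Theorem \ref{SobPonIne}) to derive a reverse H\"{o}lder inequality for $\mathcal{T}(\cdot,|\nabla u|)$, and then to conclude the higher integrability by invoking the Giaquinta-Modica lemma (Theorem \ref{Giaquinata}).

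First, I would fix concentric balls $B_{R/2}\subset B_{R}\Subset \Omega$ with $R\leq R_{0}$, where $R_{0}$ is the radius provided by Lemma \ref{Romegaprime} and additionally satisfying the smallness assumption \eqref{R0BIS} needed for Theorem \ref{SobPonIne}. Applying Theorem \ref{cacioppoli} with $R_{1}=R/2$ and $R_{2}=R$ yields
\begin{equation*}
\int_{B_{R/2}}\mathcal{T}(x,|\nabla u|)\,\diff x\leq C\int_{B_{R}}\mathcal{T}\!\left(x,\tfrac{|u-u_{B_{R}}|}{R/2}\right)\diff x.
\end{equation*}
Since $\mathcal{T}(x,\cdot)$ enjoys the $\Delta_{2}$-condition with constant $C_{\Delta}\leq 2^{r^{+}}$, the factor $1/(R/2)$ can be absorbed and the right hand side is bounded, up to a constant depending only on $C_{\Delta}$, by $\int_{B_{R}}\mathcal{T}(x,|u-u_{B_{R}}|/R)\,\diff x$.

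Second, I would divide both sides by $|B_{R/2}|$ (and rewrite $|B_{R}|/|B_{R/2}|=2^{N}$) to pass to averaged integrals, and then apply the Sobolev-Poincar\'{e}-type inequality from Theorem \ref{SobPonIne}. This gives positive constants $C$ and $\delta\in(0,1)$, depending on $N,p(\cdot),q(\cdot),r(\cdot),\sigma_{2},L_{r},d$, such that
\begin{equation*}
-\hspace{-0.38cm}\int_{B_{R/2}}\mathcal{T}(x,|\nabla u|)\,\diff x\leq C+C\left(-\hspace{-0.38cm}\int_{B_{R}}\mathcal{T}(x,|\nabla u|)^{\delta}\,\diff x\right)^{\!1/\delta}.
\end{equation*}
This is precisely a reverse H\"{o}lder inequality of the form required by Theorem \ref{Giaquinata}, with $g_{1}=\mathcal{T}(\cdot,|\nabla u|)\in L^{1}(\Omega)$, $g_{2}\equiv 1$ (so $g_{2}\in L^{m}$ for any $m>1$) and $\vartheta=\delta\in(0,1)$.

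Third, I would apply Theorem \ref{Giaquinata}: there exists $\delta_{0}>1$ such that $g_{1}^{\delta_{0}}\in L^{1}_{\loc}(\Omega)$, together with the reverse estimate
\begin{equation*}
-\hspace{-0.38cm}\int_{B_{R/2}}\mathcal{T}(x,|\nabla u|)^{\delta_{0}}\,\diff x\leq C\left(-\hspace{-0.38cm}\int_{B_{R}}\mathcal{T}(x,|\nabla u|)\,\diff x\right)^{\!\delta_{0}}+C.
\end{equation*}
Setting $m_{0}:=\delta_{0}-1>0$ and taking $\delta_{0}$-th roots yields the asserted integrability $\mathcal{T}(\cdot,|\nabla u|)\in L^{1+m_{0}}_{\loc}(\Omega)$ and the stated estimate (after absorbing the additive constant into $C$).

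The main delicate point is to ensure that, when passing from Caccioppoli to Sobolev-Poincar\'{e} and then to Gehring, the balls $B_{R/2}\subset B_{R}\Subset\Omega$ with $R\leq R_{0}$ can be chosen uniformly so that both \eqref{ROmegaprime} and \eqref{R0BIS} are fulfilled simultaneously, and so that the factors $1/R$ that appear after the Sobolev-Poincar\'{e} step are absorbed into constants through the $\Delta_{2}$-property of $\mathcal{T}$; everything else reduces to a careful bookkeeping of the exponents $p^{+},q^{+},r^{+}$ and of the constants produced by Theorems \ref{cacioppoli}, \ref{SobPonIne} and \ref{Giaquinata}.
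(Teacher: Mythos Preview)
Your proposal is correct and follows essentially the same route as the paper: apply the Caccioppoli inequality \eqref{R1R2} with $R_{1}=R/2$, $R_{2}=R$, absorb the factor $2$ via the $\Delta_{2}$-condition, feed the result into the Sobolev--Poincar\'{e} inequality \eqref{SobPoiEq} to obtain a reverse H\"{o}lder inequality, and conclude by Theorem~\ref{Giaquinata} with $g_{1}=\mathcal{T}(\cdot,|\nabla u|)$ and $g_{2}\equiv 1$. If anything, you are slightly more explicit than the paper about the $\Delta_{2}$-absorption and the identification of $g_{2}$, and your closing remark about needing the reverse H\"{o}lder estimate uniformly over all small balls (so that Gehring's lemma actually applies) is a legitimate point that the paper leaves implicit.
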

\begin{proof} According to Lemma \ref{Romegaprime}, we know that there exists $0 < R_{0} \leq 1$ such that for every ball $B_R \Subset \Omega$ with radius $R \leq R_{0}$ we have
	\begin{align*}
		W^{1, p(x)} \left(B_R\right) \subset L^{r^{R}} \left(B_R\right) \subset L^{r(x)} \left(B_R\right).
	\end{align*}
	From $u \in W^{1, p(x)}(\Omega)$, we deduce that $u \in W^{1, p(x)}\left(B_R\right)$. On this basis, for  given $B_R \Subset \Omega$ with radius $R \leq R_{0}$ we have that
	\begin{align*}
		u \in L^{r^{R}} \left(B_R\right) \subset L^{r(x)} \left(B_R\right)\quad \text{and further} \quad
		u \in L^{q^{R}} \left(B_R\right) \subset L^{q(x)} \left(B_R\right).
	\end{align*}
	Now, we can use the Caccioppoli-type inequality \eqref{R1R2} with $R_1 = R/2$ and $R_2 = R$ and thus we get that
	\begin{align*}
			\int_{B_{\frac{R}{2}}} \mathcal{T}(x, |\nabla u|)\,  \diff x & \leq C \int_{B_{R}} \mathcal{T}\l(x, \frac{\left\vert u - u_{B_{R}}\right\vert}{R}\r)\,  \diff x.
	\end{align*}
Taking into account that the Sobolev-Poincar\'{e}-type inequality \eqref{SobPoiEq} guarantees that the estimate
\begin{align*}
	-\hspace{-0,38cm} \int_{B_{R}} \mathcal{T}\l(x, \frac{\left\vert u - u_{B_{R}}\right\vert}{R}\r)\,  \diff x &
	\leq C + C \l(-\hspace{-0,38cm} \int_{B_R}  \mathcal{T}(x, |\nabla u|)^{\delta}\, \diff x \r)^{\frac{1}{\delta}}
\end{align*}
holds for all $B_R \Subset \Omega$ with $R \leq R_0$ and $R_0$ satisfying \eqref{R0BIS}, then we have also that
\begin{align*}
	-\hspace{-0,38cm}\int_{B_{\frac{R}{2}}} \mathcal{T}(x, |\nabla u|)\,  \diff x \leq C + C \l(-\hspace{-0,38cm} \int_{B_R}  \mathcal{T}(x, |\nabla u|)^{\delta}\, \diff x \r)^{\frac{1}{\delta}}
\end{align*}
holds for all $B_R \Subset \Omega$ with $R \leq R_0 $ where $R_0$ is as given in Lemma \ref{Romegaprime} and further it satisfies \eqref{R0BIS}. So, we can apply the well-known reverse H\"{o}lder inequality with increasing domain, due to Giaquinta-Modica (see Theorem \ref{Giaquinata}), getting the claim.

\end{proof}

We conclude this section by giving a higher integrability result up to the boundary for the gradient of the minimizers of the functional $\mathcal{F}_{\mathcal{T}}$. We point out that in order to establish this result we use Sobolev-Poincar\'{e}-type inequality \eqref{SobPoiIneE} along with inequalities \eqref{R1R2} and \eqref{SobPoiEq}.

\begin{theorem} \label{HI2}
	Let hypothesis \eqref{I} and assumption \eqref{condition-mu1mu2} be satisfied. Let $B_{2R} \Subset \Omega$
	and let $w \in W^{1, p(x)}(\Omega)$ be a function such that
	\begin{align*}
		\int_{B_{2 R}} \mathcal{T} (x, |\nabla w|)^{1 + m} \, \diff x < + \infty
	\end{align*}
	for some $m > 0$. In addition, we assume that $R \leq R_0$ where $0 < R_0 \leq 1$ satisfies \eqref{ROmegaprime} and \eqref{R0BIS}. If $v$ is a minimizer of the functional $\mathcal{F}_{\mathcal{T}} (u; B_R)$ such that $v = w$ on $\partial B_R$, then there exist $0 < m_1 < m$ and a positive constant $C$, depending on $p(\cdot), q(\cdot), r(\cdot), N$, such that
	\begin{align*}
		\mathcal{T}(x, |\nabla v|)^{1 + m_1} \in L^1\left(B_R\right),
	\end{align*}
	and further
	\begin{align*}
		\left(-\hspace{-0,36cm}\int_{B_{R}} \mathcal{T}(x, |\nabla v|)^{1+m_1} \, \diff x \right)  \leq& C 	\left(-\hspace{-0,36cm}\int_{B_{2 R}} \mathcal{T}(x, |\nabla v|) \, \diff x \right)^{1+m_1} \\
		& + C 	\left(-\hspace{-0,36cm}\int_{B_{2 R}} \mathcal{T}(x, |\nabla w|)^{1+m_1} \, \diff x \right) + C
		\end{align*}
	holds true.
\end{theorem}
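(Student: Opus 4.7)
The plan is to derive a reverse H\"{o}lder inequality for $\mathcal{T}(x,|\nabla v|)$ on small balls $B_\rho(x_0)\Subset B_{2R}$, with a forcing term involving $\mathcal{T}(x,|\nabla w|)$, and then apply Theorem \ref{Giaquinata} (the Giaquinta-Modica lemma) to upgrade the integrability exponent. First I would extend $v$ to all of $B_{2R}$ by setting $v:=w$ on $B_{2R}\setminus B_R$; this is legitimate because $v=w$ on $\partial B_R$, giving $v\in W^{1,p(x)}(B_{2R})$ with $\nabla v=\nabla w$ a.\,e.\ outside $B_R$.

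The key step is a boundary Caccioppoli-type estimate that borrows the scheme of Theorem \ref{cacioppoli} but uses $w$ in place of a mean. For $B_\rho(x_0)\Subset B_{2R}$ with $\rho\leq R_0$ and $\rho/2\leq t<s\leq\rho$, I would pick $\theta\in C^1$ with $\theta\equiv 1$ on $B_t(x_0)$, $\theta\equiv 0$ outside $B_s(x_0)$, $|\nabla\theta|\leq 2/(s-t)$, and test the minimality of $v$ against $\varphi:=v-\theta(v-w)$. The function $\varphi$ is admissible because $v\equiv w$ outside $B_R$ forces $\varphi=v$ on $\partial B_R$ and confines $\operatorname{supp}(v-\varphi)$ to $\overline{B_s(x_0)}$. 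Expanding $\nabla\varphi=(1-\theta)\nabla v+\theta\nabla w-(v-w)\nabla\theta$, using \eqref{additivity} to bound $\mathcal{T}(x,|\nabla\varphi|)$ by the three natural pieces, adding the integral over $B_s\setminus B_R$ (where $\nabla v=\nabla w$) to both sides, and finishing with the hole-filling trick used in Theorem \ref{cacioppoli}, I would arrive at
\begin{align*}
\int_{B_{\rho/2}(x_0)}\mathcal{T}(x,|\nabla v|)\,\diff x \leq C\int_{B_\rho(x_0)}\mathcal{T}(x,|\nabla w|)\,\diff x + C\int_{B_\rho(x_0)}\mathcal{T}\!\left(x,\tfrac{|v-w|}{\rho}\right)\diff x.
\end{align*}

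To turn the last term into reverse H\"{o}lder form I would dichotomise on $B_\rho(x_0)$. If $B_\rho(x_0)\subset B_R$, I would replace $\varphi$ by the interior variant $v-\theta(v-v_{B_\rho})$ as in Theorem \ref{cacioppoli} and apply the Sobolev-Poincar\'{e} inequality \eqref{SobPoiEq} of Theorem \ref{SobPonIne}. If $B_\rho(x_0)\not\subset B_R$, then for $\rho$ small enough compared to $R$ a fixed fraction $\gamma|B_\rho(x_0)|$ of $B_\rho(x_0)$ lies in $B_{2R}\setminus B_R$, where $v-w\equiv 0$; Theorem \ref{SobPoiInEE} applied to $v-w$ then bounds the last integral by a constant plus an $L^\delta$-average of $\mathcal{T}(x,|\nabla(v-w)|)$, which via \eqref{additivity} splits into an $|\nabla v|$ and an $|\nabla w|$ contribution. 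In both cases I would obtain a single uniform estimate
\begin{align*}
-\hspace{-0,38cm}\int_{B_{\rho/2}(x_0)}\!\!\mathcal{T}(x,|\nabla v|)\,\diff x \leq C\left(-\hspace{-0,38cm}\int_{B_\rho(x_0)}\!\!\mathcal{T}(x,|\nabla v|)^\delta\,\diff x\right)^{1/\delta}\!\!+C\,-\hspace{-0,38cm}\int_{B_\rho(x_0)}\!\!\mathcal{T}(x,|\nabla w|)\,\diff x+C,
\end{align*}
valid for every $B_\rho(x_0)\Subset B_{2R}$ with $\rho\leq R_0$ and some $\delta\in(0,1)$.

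The last step is to feed this into Theorem \ref{Giaquinata} with $g_1:=\mathcal{T}(x,|\nabla v|)$, $\vartheta:=\delta$ and $g_2:=\mathcal{T}(x,|\nabla w|)+1\in L^{1+m}(B_{2R})$, which yields some $m_1\in(0,m)$ and, after a standard covering of $B_R$ by balls of radius $\leq R_0$, the claimed $L^{1+m_1}$ estimate. The step I expect to be the main obstacle is the geometric part of the dichotomy above: verifying that the density condition $|B_\rho(x_0)\setminus B_R|\geq\gamma|B_\rho(x_0)|$ required by Theorem \ref{SobPoiInEE} holds with a $\gamma$ independent of $x_0$ and $\rho$ whenever $B_\rho(x_0)\not\subset B_R$ (typically handled by replacing $B_\rho(x_0)$ with a concentric enlargement centred at the nearest point of $\partial B_R$), so that the reverse H\"{o}lder inequality carries uniform constants and Theorem \ref{Giaquinata} applies on a single covering family.
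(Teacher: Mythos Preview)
Your proposal is correct and follows essentially the same strategy as the paper's proof: extend $v$ by $w$ to $B_{2R}$, derive a boundary Caccioppoli estimate via the competitor $v-\theta(v-w)$ with hole-filling, apply \eqref{SobPoiEq} on interior balls and \eqref{SobPoiIneE} on balls meeting $\partial B_R$, then feed the resulting reverse H\"{o}lder inequality into Theorem \ref{Giaquinata} and cover. The paper handles your flagged density obstacle exactly by the recentering you suggest, taking the boundary balls centred at points $y\in\partial B_R$ so that the bound $|B_{2\varrho}(y)\setminus B_R|\geq\gamma|B_{2\varrho}(y)|$ holds automatically.
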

\begin{proof}
At first, we consider $B_R \Subset \Omega$ and we choose $y \in B_R$ and $\varrho > 0$ so that we have $B_{2 \varrho}(y) \subset B_R$. Then, as $R \leq R_0$, using the Caccioppoli-type inequality \eqref{R1R2} with $R_1 = \varrho$ and $R_2 = 2 \varrho$ we get
\begin{align*}
		\int_{B_{\varrho}(y)} \mathcal{T}(x, |\nabla v|)\,  \diff x & \leq C \int_{B_{2 \varrho}(y)} \mathcal{T}\l(x, \frac{|v - v_{2 \varrho}|}{\varrho}\r)\,  \diff x.
\end{align*}
Now, the Sobolev-Poincar\'{e}-type inequality \eqref{SobPoiEq} assures that
\begin{align*}
	-\hspace{-0,38cm} \int_{B_{2 \varrho}(y)} \mathcal{T}\l(x, \frac{|v - v_{2 \varrho}|}{\varrho}\r)\,  \diff x &
	\leq C + C \l(-\hspace{-0,38cm} \int_{B_{2 \varrho}(y)}  \mathcal{T}(x, |\nabla v|)^{\delta}\, \diff x \r)^{\frac{1}{\delta}}.
\end{align*}
 On account of this, we derive that
\begin{align*}
	-\hspace{-0,38cm}\int_{B_{\varrho}(y)} \mathcal{T}(x, |\nabla v|)\,  \diff x \leq C + C \l(-\hspace{-0,38cm} \int_{B_{2 \varrho}(y)}  \mathcal{T}(x, |\nabla v|)^{\delta}\, \diff x \r)^{\frac{1}{\delta}}.
\end{align*}
 Therefore, we can apply Theorem \ref{Giaquinata} which allows us to obtain the higher integrability for the gradient of the minimizers of $\mathcal{F}_{\mathcal{T}}$.

Let now $y \in \partial B_R$ and $\varrho > 0$ be such that $B_{2 \varrho}(y) \subset B_{2 R} \Subset \Omega$. It follows that $2 \rho \leq R$. Also, we take $t, s \in \mathbb{R}$ so that $\varrho \leq t < s \leq 2 \varrho$ and let $u$ be the function defined by
\begin{align} \label{u}
	u(x)=
	\begin{cases}
		v(x) & \text{if } x \in B_R,\\
		w(x)& \text{if } x \in B_{2 R} \setminus B_R.
	\end{cases}
\end{align}
Next, we consider a cut-off function $\theta_1 \in C^1(\Omega)$ satisfying the following conditions:
\begin{align} \label{theta2}
	0 \leq \theta_1 \leq 1, \quad \theta_1 \equiv 1 \ \text{on} \ B_{t}(y), 	\quad \theta_1 \equiv 0 \ \text{outside} \ B_{s}(y) \quad \text{and} \quad |\nabla \theta_1| \leq \frac{1}{s - t}
\end{align}
and we in addition put $h : = v - \theta_1 \, (v - w)$. Taking into account that
\begin{align*}
	|\nabla h| \leq \left(1 - \theta_1\right) |\nabla v| + \theta_1 |\nabla w| + \frac{|w - v|}{s - t},
\end{align*}
then we have
 \begin{align*}
 	\int_{B_{t}(y) \cap B_R} \mathcal{T}(x, |\nabla v|)\,  \diff x  \leq& \int_{B_{s}(y) \cap B_R} \mathcal{T}(x, |\nabla v|)\,  \diff x\\
 	  \leq& \int_{B_{s}(y) \cap B_R} \mathcal{T}(x, |\nabla h|)\,  \diff x \\
 	 \leq& C \int_{(B_{s}(y) \setminus B_t(y)) \cap B_R} \mathcal{T}(x, |\nabla v|) \, \diff x \\
 	& + C \int_{B_{s}(y) \cap B_R} \mathcal{T}\l(x, \frac{|w - v|}{s - t}\r) \, \diff x \\
 	& + C \int_{B_{s}(y) \cap B_R} \mathcal{T}\l(x, |\nabla w|\r) \, \diff x.
 \end{align*}
Now, we apply the hole-filling method. Thus, we add to both the sides
\begin{align*}
	C \,\int_{B_{t}(y) \cap B_R} \mathcal{T}(x, |\nabla v|)\,  \diff x
\end{align*} and then we divide the obtained inequality by $C + 1$. In this way, we derive that
\begin{align*}
	\int_{B_{t} (y) \cap B_R} \mathcal{T}(x, |\nabla v|)\,  \diff x  \leq& 	\frac{C}{C + 1} \int_{B_{s}(y) \cap B_R} \mathcal{T}(x, |\nabla v|) \, \diff x \\
	& + \frac{C}{C + 1} \int_{B_{s}(y) \cap B_R} \mathcal{T}\l(x, \frac{|w - v|}{s - t}\r) \, \diff x \\
	& + \frac{C}{C + 1} \int_{B_{s}(y) \cap B_R} \mathcal{T}\l(x, |\nabla w|\r) \, \diff x.
\end{align*}
At this point, thanks to a standard iterative method (see Harjulehto-H\"{a}st\"{o}-Toivanen \cite[Lemma 4.2]{Harjulehto-Hasto-Toivanen-2017}), we obtain that
\begin{align*}
	\int_{B_{\varrho} (y) \cap B_R} \mathcal{T}(x, |\nabla v|)\,  \diff x  \leq& C \int_{B_{2 \varrho} (y) \cap B_R} \mathcal{T}\l(x, \frac{| w - v|}{\varrho}\r)\,  \diff x+ C \int_{B_{2 \varrho} (y) \cap B_R} \mathcal{T}(x, |\nabla w|)\,  \diff x\\
	 =&  C \int_{B_{2 \varrho} (y)} \mathcal{T}\l(x, \frac{| w - u|}{\varrho}\r)\,  \diff x+ C \int_{B_{2 \varrho} (y) \cap B_R} \mathcal{T}(x, |\nabla w|)\,  \diff x
\end{align*}
(we remark that $ w - v = w - u$ on $B_R$ and $w - u = 0$ on $B_{2 \varrho} (y) \setminus B_R$ (see \eqref{u})).

According of this and with a view to \eqref{u}, we have
\begin{align*}
	\int_{B_{\varrho} (y)} \mathcal{T}(x, |\nabla u|)\,  \diff x  =& \int_{B_{\varrho} (y) \cap B_R} \mathcal{T}(x, |\nabla v|)\,  \diff x+ \int_{B_{\varrho} (y) \setminus B_R} \mathcal{T}(x, |\nabla w|)\,  \diff x\\
	 \leq& C  \int_{B_{2 \varrho} (y) } \mathcal{T}\l(x, \frac{| w - u|}{\varrho}\r)\,  \diff x+ C \int_{B_{2 \varrho} (y)} \mathcal{T}(x, |\nabla w|)\,  \diff x.
\end{align*}
Since $2 \rho \leq R \leq R_0$, we can use the Sobolev-Poincar\'{e}-type inequality \eqref{SobPoiIneE} with $E = B_{2 \varrho}(y) \setminus B_R$ to obtain
\begin{align*}
		-\hspace{-0,38cm} \int_{B_{\varrho} (y)} \mathcal{T}(x, |\nabla u|)\,  \diff x  \leq&
		C \l(-\hspace{-0,38cm} \int_{B_{2 \varrho}(y)}  \mathcal{T}(x, |\nabla (w - u)|)^{\delta}\, \diff x \r)^{\frac{1}{\delta}}+ C  \int_{B_{2 \varrho} (y)} \mathcal{T}(x, |\nabla w|)\,  \diff x + C
\end{align*}
for some $0 < \delta < 1$. Hence, we get
\begin{align*}
	-\hspace{-0,38cm} \int_{B_{\varrho} (y)} \mathcal{T}(x, |\nabla u|)\,  \diff x  \leq&
	C \l(-\hspace{-0,38cm} \int_{B_{2 \varrho}(y)}  \mathcal{T}(x, |\nabla  u|)^{\delta}\, \diff x \r)^{\frac{1}{\delta}}+ C -\hspace{-0,38cm} \int_{B_{2 \varrho} (y)} \mathcal{T}(x, |\nabla w|)\,  \diff x + C.
\end{align*}
Finally, thanks to Theorem \ref{Giaquinata} we know that there exists $0 < m_1 < \delta$ so that the following estimate
\begin{align*}
	-\hspace{-0,38cm} \int_{B_{\varrho} (y)} \mathcal{T}(x, |\nabla u|)^{1 + m_1}\,  \diff x
	 \leq& C \l(-\hspace{-0,38cm} \int_{B_{2 \varrho}(y)}  \mathcal{T}(x, |\nabla  u|)\, \diff x \r)^{1 + m_1}\\
	& + C -\hspace{-0,38cm} \int_{B_{2 \varrho} (y)} \mathcal{T}(x, |\nabla w|)^{1 + m_1}\,  \diff x + C
\end{align*}
holds true. Hence, we deduce that $\mathcal{T}(x, |\nabla u|)^{1 + m_1} \in L^1\left(B_{\varrho}(y)\right)$, and consequently we have  $\mathcal{T}(x, |\nabla v|)^{1 + m_1} \in L^1(B_{\varrho}(y) \cap B_R)$.
Now, the claim follows by a simple covering argument.
\end{proof}

\section*{Statements \& Declarations}

\noindent \textbf{Competing interests.}  The authors read and approved the final manuscript. The authors have no relevant financial or non-financial interests to disclose.\\

\noindent \textbf{Availability of data and material.} This paper has no associated data and material.


\end{document}